\documentclass{article}
\usepackage[utf8]{inputenc}

\usepackage{amsmath,amsfonts,amssymb,amsthm,enumerate,hyperref,ytableau,authblk,listings}
\usepackage[capitalize]{cleveref}
\usepackage{microtype}
\usepackage{fullpage,color}
\lstset{language=python,
  showstringspaces=false,
  basicstyle=\footnotesize\ttfamily,
  keywordstyle=\bfseries\color{green!40!black},
  commentstyle=\itshape\color{purple!40!black},
  identifierstyle=\color{blue},
  stringstyle=\color{orange}
}
\newtheorem{theorem}{Theorem}[section]
\newtheorem{lemma}[theorem]{Lemma}

\newtheorem{conjecture}[theorem]{Conjecture}

\theoremstyle{definition}

\providecommand{\coset}[2]{[#1 \mapsto #2]}
\providecommand{\dcoset}[2]{[#1 \leftrightarrow #2]}
\providecommand{\id}{\mathsf{id}}
\providecommand{\cM}{\mathcal{M}}

\title{Uniqueness for 2-Intersecting Families of \\ Permutations and Perfect Matchings}
\author{Gilad Chase}
\author{Neta Dafni}
\author{Yuval Filmus}
\author{Nathan Lindzey}
\affil{Technion --- Israel Institute of Technology}

\begin{document}

\maketitle

\begin{abstract}
    We give a characterization of the largest $2$-intersecting families of permutations of $\{1,2,\ldots,n\}$ and of perfect matchings of the complete graph $K_{2n}$ for all $n \geq 2$.
\end{abstract}

\section{Introduction} \label{sec:introduction}

Erd\H{o}s--Ko--Rado theory~\cite{EKR} studies intersecting families of objects. One of the high points of the theory is the Ahlswede--Khachatrian theorem~\cite{AK1,AK2}.

\begin{theorem} \label{thm:AK}
Let $\mathcal{F}$ be a subset of $\binom{[n]}{k}$ (the collection of all subsets of $[n] := \{1,\ldots,n\}$ of size $k$) which is \emph{$t$-intersecting}: every $A,B \in \mathcal{F}$ satisfy $|A \cap B| \geq t$. Then
\[
 |\mathcal{F}| \leq \max_{r \leq (n-t)/2} \left|\left\{ A \subseteq \binom{[n]}{k} : |A \cap [t+2r]| \geq t+r \right\}\right|.
\]
Furthermore, if $\mathcal{F}$ achieves this bound, then
\[
 \mathcal{F} = \left\{ A \subseteq \binom{[n]}{k} : |A \cap S| \geq t+r \right\}
\]
for some $r \le (n-t)/2$ and some $S \subseteq [n]$ of size $t + 2r$.
\end{theorem}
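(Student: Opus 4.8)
The plan is to run the Ahlswede--Khachatrian argument: reduce to \emph{left-compressed} families by shifting, establish the inequality and the equality characterization in that restricted setting by induction, and finally ``un-shift'' to reach arbitrary extremal families. Write $\mathcal{F}_{t,r} := \{A \in \binom{[n]}{k} : |A \cap [t+2r]| \geq t+r\}$ for the candidate extremizers. \textbf{Step 1 (compression).} For $i < j$ let $S_{ij}$ be the shift that replaces each $A \in \mathcal{F}$ having $j \in A$ and $i \notin A$ by $(A \setminus \{j\}) \cup \{i\}$ whenever the latter is not already in $\mathcal{F}$. One checks that $S_{ij}$ preserves $|\mathcal{F}|$ and the property of being $t$-intersecting, and that iterating over all pairs terminates (the quantity $\sum_{A \in \mathcal{F}} \sum_{a \in A} a$ strictly decreases at each nontrivial step). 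Hence it suffices to prove the statement for a shifted family, and then in Step 4 to show that any extremal family whose iterated shift is a relabelling of some $\mathcal{F}_{t,r}$ is itself such a relabelling.

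\textbf{Step 2 (the shifted case --- the crux).} Induct on $n$; the cases $n = t$, $k$ small, and $n \leq 2k - t$ (where all of $\binom{[n]}{k}$ is already $t$-intersecting) are immediate. Given a shifted $t$-intersecting $\mathcal{F}$, split at the last coordinate into $\mathcal{F}_0 = \{A \in \mathcal{F} : n \notin A\} \subseteq \binom{[n-1]}{k}$ and $\mathcal{F}_1 = \{A \setminus \{n\} : n \in A \in \mathcal{F}\} \subseteq \binom{[n-1]}{k-1}$. Always $\mathcal{F}_0$ is $t$-intersecting, $\mathcal{F}_1$ is $(t-1)$-intersecting, and the two are cross-$t$-intersecting; shiftedness strengthens this (in the non-degenerate range, essentially Frankl's dichotomy: either $\mathcal{F}_1$ may be taken $t$-intersecting, or $\mathcal{F}_0$ and $\mathcal{F}_1$ are cross-$(t+1)$-intersecting), so that the inductive bound applies to each piece with the appropriate parameters. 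Optimizing over how much of $\mathcal{F}$ lies above versus below coordinate $n$ must then reproduce $\max_r |\mathcal{F}_{t,r}|$. The real content is making this optimization close up \emph{exactly} at the Frankl families rather than leaving a gap; this is the ``pushing--pulling'' analysis of Ahlswede--Khachatrian (and in general requires working not with $\mathcal{F}$ itself but with a minimal generating family), and I expect it to be by far the main obstacle --- it is what resisted proof for decades and spans their two papers.

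\textbf{Step 3 (equality in the shifted case).} Return to Step 2 under the assumption $|\mathcal{F}| = \max_r |\mathcal{F}_{t,r}|$. The optimization being tight forces both $\mathcal{F}_0$ and $\mathcal{F}_1$ to be extremal for their own parameters, hence by induction each is a relabelling of a Frankl family; the cross-intersection constraint between them then forces the two relabellings to be compatible, i.e.\ governed by a common set $S \subseteq [n]$ with $|S| = t + 2r$, so that $\mathcal{F} = \mathcal{F}_{t,r}$ up to a permutation of $[n]$. One must additionally rule out ``hybrid'' extremal families that could arise when two distinct values of $r$ both attain the maximum; this occurs only at boundary values of $n$ and is a finite check.

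\textbf{Step 4 (un-shifting).} Let $\mathcal{F}$ be an arbitrary extremal $t$-intersecting family. Shifting produces an extremal shifted family, which by Steps 2--3 is some $\mathcal{F}_{t,r}$. It remains to show that if a single shift $S_{ij}$ carries a maximum-size $t$-intersecting family onto a relabelling of a Frankl family, then the original family was already such a relabelling (possibly with $i$ and $j$ transposed). This uses the rigidity of $\mathcal{F}_{t,r}$: its trace on any coordinate pair is explicit, so the $S_{ij}$-preimages of maximum size can be enumerated, and all of them except the permuted Frankl families are seen to fail either $t$-intersection or the size bound. Composing back along the shifting sequence yields the characterization. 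Besides the pushing--pulling step, the delicate points are exactly these boundary and tie cases in Steps 3--4, where the list of extremal families can transiently grow and must be pruned by hand.
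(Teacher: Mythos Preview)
The paper does not prove this theorem. Theorem~\ref{thm:AK} is stated in the introduction purely as background and is attributed to Ahlswede and Khachatrian via the citations \cite{AK1,AK2}; no proof or even proof sketch appears anywhere in the paper. The paper's own contributions concern $2$-intersecting families of permutations and perfect matchings (Theorems~\ref{thm:main-sym} and~\ref{thm:main-pms}), and the Ahlswede--Khachatrian theorem is invoked only as motivation for Conjecture~\ref{cnj:AK-sym}.

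Your outline is a fair high-level summary of the original Ahlswede--Khachatrian strategy (shifting, the generating-family / pushing--pulling analysis, and un-shifting for uniqueness), and you are right that Step~2 is where essentially all the difficulty lies. But since the paper contains no proof of this statement, there is nothing to compare your proposal against.
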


The theorem consists of two statements: an \emph{upper bound} on the size of $t$-intersecting families, and a characterization of the extremal families. The latter part is known as \emph{uniqueness}.

The Ahlswede--Khachatrian theorem is about intersecting families of sets.
In this paper, we will be interested in intersecting families of permutations and perfect matchings. 

\paragraph{Permutations}
Let $S_n$ be the group of all permutations of $[n]$. Two permutations $\alpha,\beta \in S_n$ are \emph{$t$-intersecting} if there exist $t$ distinct indices $i_1,\ldots,i_t \in [n]$ such that $\alpha(i_1) = \beta(i_1),\ldots,\alpha(i_t) = \beta(i_t)$. Ellis, Friedgut and Pilpel~\cite{EFP} conjectured that the Ahlswede--Khachatrian theorem extends to intersecting families of permutations.

\begin{conjecture} \label{cnj:AK-sym}
Let $\mathcal{F}$ be a subset of $S_n$ which is $t$-intersecting. Then
\[
 |\mathcal{F}| \leq \max_{r \leq (n-t)/2} \left|\left\{ \alpha \in S_n : \alpha(i) = i \text{ for at least } t+r \text{ many } i \in [t+2r] \right\}\right|.
\]
Furthermore, if $\mathcal{F}$ achieves this bound, then
\[
 \mathcal{F} = \left\{ \alpha \in S_n : \alpha(i_s) = j_s \text{ for at least } t+r \text{ many } s \in [t+2r] \right\}
\]
for some $r \leq (n-t)/2$ and some distinct $i_1,\ldots,i_{t+2r} \in [n]$ and distinct $j_1,\ldots,j_{t+2r} \in [n]$.
\end{conjecture}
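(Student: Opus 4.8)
The statement we set out to prove is the $t=2$ case of Conjecture~\ref{cnj:AK-sym}, for permutations and, in parallel, for perfect matchings; the cases $t\ge 3$ we leave open. Observe first that for $t=2$ the maximum over $r$ is always attained at $r=0$: for $n$ at least $4$, where an $r=1$ family exists, a one-line computation gives $(n-2)! > 4(n-3)!-3(n-4)!$ in the permutation case and $(2n-5)!! > 4(2n-7)!!-3(2n-9)!!$ in the matching case (the right-hand side being the size of an $r=1$ family), while for $n\le 3$ the relation ``$2$-intersecting'' degenerates to ``equal'', so the extremal families are forced and there is nothing to prove. Hence it suffices to show: a $2$-intersecting family $\mathcal F$ of size $(n-2)!$ (resp.\ $(2n-5)!!$) must be a \emph{double dictator} $\{\alpha : \alpha(i_1)=j_1,\ \alpha(i_2)=j_2\}$ (resp.\ $\{M : e_1,e_2\in M\}$). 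We treat the finitely many small values of $n$ by hand and assume below that $n$ is large enough for the scheme-theoretic machinery to be non-degenerate.

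The plan is to convert this into a statement about Boolean functions. The non-$2$-intersecting pairs form a graph $\Gamma$ on $S_n$ (resp.\ on the set of perfect matchings of $K_{2n}$) that is invariant under $S_n$ by translation (resp.\ under $S_{2n}$), so $\Gamma$ lives inside the conjugacy-class scheme of $S_n$ (resp.\ the perfect-matching association scheme, governed by the Gelfand pair $(S_{2n},\, S_2\wr S_n)$). The plain Hoffman bound on $\Gamma$ is not tight once $t\ge 2$; instead one chooses a suitable pseudo-adjacency matrix $M$ in the scheme --- a nonnegative combination of its basis matrices supported on non-$2$-intersecting pairs --- tuned, as in the $r=0$ regime of Theorem~\ref{thm:AK}, so that the ratio bound it yields equals $(n-2)!$ (resp.\ $(2n-5)!!$) and so that its extreme eigenvalue is attained exactly on the ``degree $\le 2$'' isotypic components $S^{(n)}\oplus S^{(n-1,1)}\oplus S^{(n-2,2)}\oplus S^{(n-2,1,1)}$ (resp.\ on the four analogous spaces of the matching scheme). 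This simultaneously recovers the size bound and, through the equality case, forces $\mathbf{1}_{\mathcal F}$ to be a Boolean-valued function of degree at most $2$.

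The next step is to classify the Boolean functions of degree at most $2$ and to extract the double dictators. Building on the known description of Boolean degree-$1$ functions on $S_n$ (the dictators $\alpha\mapsto[\alpha(i)\in J]$, their transposes $\alpha\mapsto[\alpha^{-1}(j)\in I]$, and the constants), one expects that every Boolean degree-$2$ function is, up to relabelling of rows and columns, one of a short list: a degree-$\le 1$ function; a product $[\alpha(i)\in J]\cdot[\alpha(i')\in J']$ of dictators on disjoint rows (or a transposed or mixed variant); an ``$r=1$'' indicator $\{\alpha : |\{s : \alpha(i_s)=j_s\}|\ge 3\}$; a Boolean combination of the above; together with a bounded number of sporadic low-dimensional examples. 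Imposing the two further constraints we have --- that the support be $2$-intersecting and have size exactly $(n-2)!$ --- then eliminates every possibility except $\{\alpha : \alpha(i_1)=j_1,\ \alpha(i_2)=j_2\}$: a single dictator and its complement are not $2$-intersecting, the $r=1$ family has the wrong cardinality by the computation above, unions are too large, and so on. The same classification and elimination are carried out for perfect matchings inside the matching scheme, with dictators replaced by the families $\{M : e\in M\}$ and ``degree'' referring to the grading of $(S_{2n},\, S_2\wr S_n)$.

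I expect the main obstacle to be the classification of Boolean functions of degree at most $2$, and especially so for perfect matchings: the zonal spherical functions and the combinatorics of the matching scheme are much less transparent than the character theory of $S_n$, and there is no shortcut via stability or removal lemmas, which would demand $n$ large whereas we want \emph{every} $n\ge 2$. A secondary difficulty is constructing the pseudo-adjacency matrix $M$ and analysing its equality case carefully enough to conclude ``$\mathbf{1}_{\mathcal F}$ has degree $\le 2$'' rather than merely ``$\mathbf{1}_{\mathcal F}$ lies in some eigenspace''. If the algebraic route turns out to be awkward for the smaller values of $n$, a self-contained alternative is a Hilton--Milner-type argument: show by a shifting or kernel computation that any $2$-intersecting family \emph{not} contained in a double dictator has size strictly less than $(n-2)!$ (resp.\ $(2n-5)!!$), which yields uniqueness at once; the crux there is the kernel lemma controlling families with no common ``core'' pair.
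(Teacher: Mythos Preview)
Your first step --- using a weighted Hoffman/ratio bound in the conjugacy-class (resp.\ Gelfand-pair) scheme to force $\deg \mathbf{1}_{\mathcal F}\le 2$ --- is exactly what Meagher--Razafimahatratra and Fallat--Meagher--Shirazi carry out, and the paper simply quotes those results (\Cref{thm:MR21,thm:FMS}) as its starting point. So up to there your plan matches.

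The genuine gap is your second step: a classification of Boolean degree-$2$ functions on $S_n$ (and on $\cM_{2n}$). No such classification exists, and the ``short list'' you propose is not known to be complete; you yourself flag this as the expected obstacle, and the paper explicitly notes that the polyhedral techniques which settle degree~$1$ do not extend (see the remark citing \cite{FilmusComment}). Without that classification your elimination argument has nothing to feed on, and the Hilton--Milner/shifting alternative you sketch is likewise not a proof --- shifting on $S_n$ is notoriously ill-behaved, and the ``kernel lemma'' you allude to is precisely the hard part.

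The paper sidesteps the classification entirely by working with \emph{certificate complexity} rather than with the function itself. The chain of ideas is: (i) a short sensitivity argument (\Cref{lem:C-bound-sym,lem:C-bound-pms}) shows $\deg f\le 2$ forces $C(f)\le n-2$ once $n\ge 8$; (ii) when $C(f)\le n-2$, minimum $1$-certificates of any two elements of $\mathcal F$ must share at least two pairs (\Cref{lem:2-int-coset-sym,lem:2-int-coset-pms}), so $\mathcal F$ is $(C(f)-1)$-covered by compatible cosets; (iii) a degree-reduction lemma (\Cref{lem:degree-reduction-sym,lem:degree-reduction-pms}) plus the degree-$1$ classification give a structure lemma (\Cref{lem:structure-sym,lem:structure-pms}) saying every element has a certificate of a rigid ``coloured'' form of size $C(f)$; (iv) a counting argument over these coloured certificates bounds $|\mathcal F|$ by $T\cdot (n-C(f))!$ with $T$ explicit, and comparing with $(n-2)!$ forces $C(f)=2$, whence $\mathcal F$ is a single $2$-coset. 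The residual cases $n\le 7$ are handled by clique enumeration. For matchings there is one extra wrinkle --- degree-$1$ Boolean functions on $\cM_{2n}$ include triangles, which have certificate complexity~$2$ --- and the paper disposes of it via an ``extended certificate'' argument using maximality of $\mathcal F$ (\Cref{lem:extended-certificates}). None of this requires knowing what a generic Boolean degree-$2$ function looks like.
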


When $t \leq 3$, the maximum is always attained at $r = 0$, in which case the upper bound is $(n-t)!$, and the conjectured extremal families are the \emph{$t$-cosets}.

Deza and Frankl~\cite{DezaFrankl} proved the upper bound part of \Cref{cnj:AK-sym} in the special case $t = 1$ by noticing that the $n$ cyclic rotations of any fixed permutation are pairwise non-$1$-intersecting. Proving uniqueness in this case proved a lot harder, but eventually many proofs were found~\cite{CameronKu,LaroseMalvenuto,GodsilMeagher,EFP}.

The case $t > 1$ is significantly harder. For every $t > 1$, Ellis, Friedgut and Pilpel~\cite{EFP} proved that the upper bound in \Cref{cnj:AK-sym} holds for large enough $n$, and Ellis~\cite{EllisCameronKu} proved that the uniqueness part of \Cref{cnj:AK-sym} holds for large enough $n$. Ellis, Friedgut and Pilpel used a spectral technique based on the so-called Hoffman bound (closely related to the Lov\'asz theta function and to the linear programming bound in coding theory), which has found many other applications in Erd\H{o}s--Ko--Rado theory~\cite{GodsilMeagherBook}.

Recently, Meagher and Razafimahatratra~\cite{MR21}, refining the techniques of Ellis, Friedgut and Pilpel, proved that $2$-intersecting subsets of $S_n$ contain at most $(n-2)!$ permutations, thus verifying the upper bound part of \Cref{cnj:AK-sym} for $t = 2$ and \emph{all} $n$. However, they were unable to characterize the extremal families in this case. In this paper, we show that all such families are $2$-cosets, thus verifying the uniqueness part of \Cref{cnj:AK-sym} for $t = 2$ and all $n$.

\begin{theorem} \label{thm:main-sym}
Let $n \geq 2$. If $\mathcal{F}$ is a $2$-intersecting subset of $S_n$ of size $(n-2)!$, then there exist $i_1 \neq i_2$ and $j_1 \neq j_2$ such that
\[
 \mathcal{F} = \{ \alpha \in S_n : \alpha(i_1) = j_1 \text{ and } \alpha(i_2) = j_2 \}.
\]
\end{theorem}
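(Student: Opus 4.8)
The plan is to combine a spectral rigidity statement with a reduction to the already-known $t=1$ uniqueness theorem. Since \cite{MR21} establishes that $(n-2)!$ is the maximum size, the family $\mathcal{F}$ is a maximum independent set in the Cayley graph $\Gamma$ on $S_n$ whose connection set is $\{\sigma \in S_n : \sigma \text{ fixes at most one point}\}$ — the graph joining non-$2$-intersecting pairs. The first step is to turn extremality into a representation-theoretic constraint: the characteristic vector $\mathbf{1}_{\mathcal{F}} \in \mathbb{R}[S_n]$ must be a \emph{Boolean function of degree at most $2$}, i.e. it lies in the sum $U$ of the isotypic components of $\mathbb{R}[S_n]$ indexed by $(n)$, $(n-1,1)$, $(n-2,2)$ and $(n-2,1,1)$ — the Fourier support of any product $\mathbf 1[\sigma(i)=j]\,\mathbf 1[\sigma(k)=l]$, and in particular of the indicator of any $2$-coset. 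This requires the sharp, all-$n$ spectral analysis behind \cite{MR21} together with an equality analysis pinning down exactly which eigenspaces $\mathbf 1_{\mathcal F}$ is allowed to meet; making this work for \emph{all} $n$, as opposed to the large-$n$ regime of \cite{EFP,EllisCameronKu}, is one of the two technical hurdles.

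Granting the degree-$2$ structure, I would left-translate so that $\id \in \mathcal{F}$; this turns $2$-cosets into pointwise double stabilizers $\{\sigma : \sigma(a)=a,\ \sigma(b)=b\}$ (so it suffices to show the translated family is one of these), and since $\id \in \mathcal F$ and $\mathcal F$ is $2$-intersecting, every $\sigma \in \mathcal F$ fixes at least two points. Let $Y := \bigcap_{\sigma \in \mathcal F}\mathrm{Fix}(\sigma)$. If $|Y|\ge 2$, any two points $a \ne b$ of $Y$ exhibit $\mathcal F$ as a subset of the double stabilizer of $\{a,b\}$, which has size exactly $(n-2)!$, hence equals it. If $|Y| = 1$, say $Y = \{c\}$, then $\mathcal F$ is contained in the stabilizer of $c$, a copy of $S_{n-1}$, inside which it is a $1$-intersecting family of the extremal size $(n-2)! = ((n-1)-1)!$; the $t=1$ uniqueness theorem \cite{CameronKu,LaroseMalvenuto,GodsilMeagher,EFP}, applied in $S_{n-1}$ (legitimate for $n \ge 3$; the cases $n \le 3$ are immediate since then $(n-2)! = 1$), forces $\mathcal F$ to be a $1$-coset of $S_{n-1}$ through $\id$ — again a double stabilizer of $S_n$. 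So everything reduces to ruling out $Y = \varnothing$.

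To exclude $Y = \varnothing$ I would use the degree-$2$ hypothesis quantitatively. With $\id \in \mathcal F$, the doubly stochastic marginal matrix $P$ given by $P_{ij} = |\{\sigma \in \mathcal F : \sigma(i)=j\}|/|\mathcal F|$ has an entry equal to $1$ if and only if $|Y| \ge 1$; so the remaining case is exactly that no entry of $P$ equals $1$, i.e. $\mathcal F$ is "spread out at every coordinate." The claim to prove is that a degree-$\le 2$ Boolean function that is $2$-intersecting and has density $1/(n(n-1))$ cannot be spread out at every coordinate. I would attack this by combining the product structure of $U$ with the coordinate decomposition $\mathcal F = \bigsqcup_j \mathcal F_j$, $\mathcal F_j = \{\sigma \in \mathcal F : \sigma(i) = j\}$, in which each $\mathcal F_j$ is $1$-intersecting inside $S_{n-1}$ and distinct $\mathcal F_j,\mathcal F_{j'}$ are cross-$2$-intersecting: the degree bound controls the two-point counts $|\{\sigma \in \mathcal F : \sigma(i)=j,\ \sigma(k)=l\}|$ — their deviations from the "independent" value $|\mathcal F|\,P_{ij}P_{kl}$ lie in a small module — and the goal is to show that these constraints together with cross-$2$-intersection force $\sum_j |\mathcal F_j|$ to strictly exceed $(n-2)!$ unless some $\mathcal F_j$ is all of $\mathcal F$ (which is the $|Y|\ge 1$ case). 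This incompatibility argument is the combinatorial heart of the proof and the place where I expect the real work to lie.

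Finally, I expect the same scheme — spectral rigidity, translate to a canonical position, then peel off one coordinate and invoke the $t=1$ theorem in the smaller structure — to run in parallel for perfect matchings of $K_{2n}$, with "$1$-coset of $S_{n-1}$" replaced by the extremal $1$-intersecting families of perfect matchings and the permutation eigenvalue computation replaced by the analogous one for the perfect matching association scheme.
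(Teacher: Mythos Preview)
Your route diverges sharply from the paper's, and the part you actually carry out is only the easy part. The reductions when $|Y|\ge 2$ and $|Y|=1$ are correct (indeed the $|Y|=1$ argument, once completed, shows that case cannot occur: the $t=1$ uniqueness in $S_{n-1}$ would exhibit a second common fixed point). But the case $Y=\varnothing$ --- equivalently, $\mathcal F$ is not contained in any single $1$-coset --- is the whole theorem, and here you have only a sketch. The line ``force $\sum_j |\mathcal F_j|$ to strictly exceed $(n-2)!$'' is confusing since $\sum_j|\mathcal F_j|=|\mathcal F|=(n-2)!$ tautologically; presumably you intend some \emph{lower} bound coming from cross-$2$-intersection of the slices, but no such bound is stated, and I see no mechanism by which the degree-$2$ constraint on second moments alone yields one. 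Cross-$2$-intersecting pairs of families in $S_{n-1}$ can be quite small individually, so a naive size argument will not close this. You have correctly located where the difficulty lies, but you have not supplied an idea that resolves it.

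The paper never looks at common fixed points. From $\deg f\le 2$ (supplied by \cite{MR21} for $n\ge 5$) it works instead with \emph{certificate complexity}. A short sensitivity argument gives $C(f)\le n-2$ once $n\ge 8$; under that hypothesis one shows that minimum $1$-certificates of any two members of $\mathcal F$ must share at least two pairs, which yields that $\mathcal F$ is $(C(f)-1)$-covered. A degree-reduction lemma (restricting to a coset in which $\mathcal F$ lives drops the degree) then gives a structure theorem: $\mathcal F$ is contained in at most $T=2\lfloor C(f)/2\rfloor\,(C(f)-1)!/2^{\lfloor C(f)/2\rfloor}$ many $C(f)$-cosets. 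Comparing $T\cdot(n-C(f))!$ against $(n-2)!$ forces $C(f)=2$, which immediately makes $\mathcal F$ a $2$-coset. The residual range $n\le 7$ is handled by an exhaustive clique search. In your language, the paper proves directly that $\mathcal F$ sits inside a single $2$-coset, skipping the intermediate ``$\mathcal F$ sits inside a single $1$-coset'' step that your $Y$-analysis aims for; so the machinery it builds (certificate complexity, degree reduction) is doing strictly more than what you need for $Y\ne\varnothing$, and that extra strength is exactly what handles your missing case. The perfect-matching proof follows the same template, with one extra wrinkle because degree-$1$ Boolean functions on $\mathcal M_{2n}$ need not have certificate complexity $1$; your proposed parallel would inherit the same gap at the analogue of $Y=\varnothing$.
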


\paragraph{Perfect matchings} So far we have considered intersecting families of permutations. A related area of study is intersecting families of perfect matchings of the complete graph $K_{2n}$. This can be seen as the non-bipartite analog of the symmetric group, which is the set of perfect matchings in the complete bipartite graph $K_{n,n}$.

A simple argument shows that a $1$-intersecting family of perfect matchings contains at most $(2n-3)!! = (2n-3)(2n-5)\cdots(1)$ perfect matchings. More difficult arguments~\cite{GodsilMeagherPM,Lindzey17,MeagherMoura,DFLLV} show that this is attained uniquely by $1$-cosets. 

Lindzey~\cite{Lindzey18,LindzeyThesis} extended the arguments of Ellis, Friedgut and Pilpel~\cite{EFP,EllisCameronKu} to the setting of perfect matchings, proving that for all $t$, the maximum size $t$-intersecting families are $t$-cosets, for large enough $n$.

Recently, Fallat, Meagher and Shirazi showed that the maximum size of a $2$-intersecting family is at most $(2n-5)!!$ for all $n$. Extending our arguments in the setting of the symmetric group, we prove an analog of \Cref{thm:main-sym} in the setting of the perfect matching scheme.

\begin{theorem} \label{thm:main-pms}
Let $n \geq 2$. If $\mathcal{F}$ is a $2$-intersecting subset of $\cM_{2n}$ of size $(2n-5)!!$, then there exist distinct $i_1,j_1,i_2,j_2$ such that $\mathcal{F}$ consists of all perfect matchings containing the edges $\{i_1,j_1\},\{i_2,j_2\}$.
\end{theorem}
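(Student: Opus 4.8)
The plan is to reduce the uniqueness statement for $2$-intersecting families to the already-established uniqueness statement for $1$-intersecting families of perfect matchings~\cite{GodsilMeagherPM,Lindzey17,MeagherMoura,DFLLV}, in parallel with \Cref{thm:main-sym}. The cases $n\leq 3$ are degenerate: two distinct perfect matchings of $K_4$ or $K_6$ agree on at most one edge, so a $2$-intersecting family is a single matching, which trivially has the asserted form; hence we assume $n\geq 4$. The reduction rests on showing that a $2$-intersecting $\mathcal F\subseteq\cM_{2n}$ with $|\mathcal F| = (2n-5)!!$ is \emph{$1$-covered}: some edge $e=\{a,b\}$ belongs to every $M\in\mathcal F$. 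Granting this, set $\mathcal F' = \{M\setminus\{e\} : M\in\mathcal F\}$, a family of perfect matchings of $K_{2n-2}$ on $[2n]\setminus\{a,b\}$. Any $M_1,M_2\in\mathcal F$ share at least two edges, at most one of which is $e$, so $M_1\setminus\{e\}$ and $M_2\setminus\{e\}$ share an edge; thus $\mathcal F'$ is $1$-intersecting, and $|\mathcal F'| = (2n-5)!! = (2(n-1)-3)!!$ is the maximum possible. By $1$-intersecting uniqueness, $\mathcal F'$ is the set of all perfect matchings of $K_{2n-2}$ through a fixed edge $e'$, and hence $\mathcal F$ is the set of all perfect matchings of $K_{2n}$ containing both $e$ and $e'$, as required.

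It thus suffices to prove $1$-coveredness, and for this the plan is to exploit the ratio-bound (Hoffman-type) mechanism underlying the bound $(2n-5)!!$ of Fallat, Meagher and Shirazi. In the perfect matching association scheme the eigenspaces $V_\mu$ are indexed by partitions $\mu\vdash n$, and equality in the ratio bound forces $\mathbf 1_{\mathcal F}$ to lie in $V_{(n)}\oplus V_{(n-1,1)}\oplus V_{(n-2,2)}\oplus V_{(n-2,1,1)}$, the span of the eigenspaces with $\mu_1\geq n-2$; equivalently, $\mathbf 1_{\mathcal F}$ is a \emph{degree-$\leq 2$} Boolean function on $\cM_{2n}$. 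This is exactly the span occupied by the indicator of a genuine $2$-coset, which is the product of the degree-$\leq 1$ edge-indicators $\mathbf 1[e_1\in M]$ and $\mathbf 1[e_2\in M]$. What remains is purely combinatorial: to show that a degree-$\leq 2$ Boolean function on $\cM_{2n}$ of minimal positive density $\tfrac{1}{(2n-1)(2n-3)}$ is a $2$-coset indicator.

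We plan to prove this by induction on $n$ through a restriction argument. One first records the classification of degree-$\leq 1$ Boolean functions on $\cM_{2n}$: because $V_{(n)}\oplus V_{(n-1,1)}$ is spanned by the constant function together with the edge-indicators $\mathbf 1[e\in M]$, these are exactly $0$, $1$, and the functions $\mathbf 1[M(v)\in S]$ that depend only on the partner of a single vertex $v$. Given a degree-$\leq 2$ Boolean $f = \mathbf 1_{\mathcal F}$ and an edge $e$, its restriction $f_e$ to the perfect matchings through $e$ is a degree-$\leq 2$ Boolean function on $\cM_{2n-2}$, whose structure is controlled by the inductive hypothesis; meanwhile the edge-degrees $\deg_{\mathcal F}(e) = |\{M\in\mathcal F : e\in M\}|$ are inner products of $f$ with edge-indicators and so depend only on the degree-$\leq 1$ part of $f$, while $\sum_e\deg_{\mathcal F}(e) = n|\mathcal F|$. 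Feeding the constraints on the $f_e$ into this counting, together with an extremality argument on the resulting edge-degree profile, should locate an edge $e_0$ with $\deg_{\mathcal F}(e_0) = |\mathcal F|$ --- a $1$-covering edge --- and identify $\mathcal F$ as the associated $2$-coset.

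The principal obstacle is this inductive step. Several ingredients that are routine for $S_n$ have to be redone for perfect matchings: the behaviour of the scheme eigenspaces under deleting an edge (the branching of $S^{2\mu}$), the precise identification of the minimal eigenspace so that equality in the ratio bound genuinely yields the degree-$\leq 2$ conclusion, and the classification of the relevant low-degree Boolean functions --- which, as the difficulty of the problem suggests, may include enough sporadic cases to require careful bookkeeping. The counting argument must also be made tight enough at small $n$, in particular around $n=4$ where the extremal size is only $(2n-5)!!=3$, possibly with a direct verification of the first few values of $n$. Finally, the reduction of the first paragraph relies on the \emph{known} $1$-intersecting uniqueness theorem, so one must check that its hypothesis ($\mathcal F'$ of maximum size in $\cM_{2n-2}$) holds on the nose, which it does since $(2n-5)!! = (2(n-1)-3)!!$.
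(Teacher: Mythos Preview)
Your reduction to the $1$-intersecting case is sound: if every matching in $\mathcal F$ contains a common edge $e$, then deleting $e$ yields a maximum $1$-intersecting family in $\cM_{2(n-1)}$, and known uniqueness finishes the argument. The entire burden therefore falls on establishing that $\mathcal F$ is $1$-covered, and here your sketch contains a concrete error and a structural gap.

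The concrete error is your classification of degree-$\le 1$ Boolean functions on $\cM_{2n}$. It is \emph{not} true that these are only $0$, $1$, and dictators $m\mapsto\mathbf 1[m(v)\in S]$: there are also the \emph{triangles} $x_{ij}+x_{ik}+x_{jk}$ and the \emph{anti-triangles} $1-x_{ij}-x_{ik}-x_{jk}$ (see \Cref{thm:degree1-classification-pms}). This is precisely the complication that separates the perfect matching scheme from the symmetric group --- degree-$1$ Boolean functions on $S_n$ have certificate complexity $1$, but on $\cM_{2n}$ they can have certificate complexity $2$. The paper needs a separate device (\Cref{lem:extended-certificates}), which exploits the \emph{maximality} of $\mathcal F$, to neutralise the anti-triangle case; your plan as written would stumble on exactly these functions when they appear as restrictions.

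The structural gap is the induction itself. Your inductive hypothesis concerns $2$-intersecting families of the extremal size $(2n-5)!!$, but the restriction $f_e$ is the indicator of $\mathcal F\cap\dcoset{a}{b}$, whose size has no reason to equal $(2(n-1)-5)!!$; if $e$ is not a covering edge it will typically be smaller, and if $e$ is a covering edge it has size $(2(n-1)-3)!!$. So the hypothesis simply does not apply to $f_e$, and the step ``the inductive hypothesis controls $f_e$'' does not close. To make a restriction argument work you would need a structure theorem for \emph{all} degree-$\le 2$ Boolean functions on $\cM_{2n}$, which is not available and is likely false in any usable form.

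For comparison, the paper avoids this induction entirely. It bounds the certificate complexity $C(f)$ (\Cref{lem:C-bound-pms}), proves a covering/structure theorem (\Cref{lem:covering-pms,lem:structure-pms}) showing that $\mathcal F$ lies in at most $T(C_1(f))$ many $C_1(f)$-cosets, and then a direct size comparison forces $C_1(f)=2$ for $n\ge 8$; the cases $n\le 7$ are dispatched by exhaustive search. In that argument, $1$-coveredness is a \emph{consequence} of $C_1(f)=2$, not an input to it.
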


\paragraph{On the proof} The spectral technique used by Ellis, Friedgut and Pilpel shows that for every $t \geq 1$, if $n$ is large enough and $\mathcal{F} \subseteq S_n$ is a $t$-intersecting family of size $(n-t)!$, then the characteristic function of $\mathcal{F}$, denoted as $1_{\mathcal{F}}$, has \emph{degree} at most~$t$. This means that $1_{\mathcal{F}}$ can be expressed as a polynomial of degree at most~$t$ in the Boolean variables $x_{ij}$, where $x_{ij} = 1$ if the input permutation maps $i$ to $j$. Equivalently, $x_{ij}$ is the $(i,j)$'th entry of the permutation matrix representing the input permutation.

Similarly, Meagher and Razafimahatratra show that if $n \geq 5$ and $\mathcal{F} \subseteq S_n$ is a $2$-intersecting family of size $(n-2)!$, then the characteristic function of $\mathcal{F}$ has degree at most~$2$. This is the starting point of this work.

Ellis, Friedgut and Pilpel used polyhedral techniques (chiefly, the Birkhoff--von~Neumann theorem on bistochastic matrices) to show that Boolean degree~$1$ functions are \emph{dictators}, that is, they either depend only on $\alpha(i)$ for some $i \in [n]$ (where $\alpha$ is the input permutation), or they depend only on $\alpha^{-1}(j)$ for some $j \in [n]$. The only $1$-intersecting dictators are $1$-cosets, and this provides a proof of the uniqueness part of \Cref{cnj:AK-sym} when $t = 1$.

Similar techniques do not work for larger degree~\cite{FilmusComment}.
Instead, we turn to the theory of \emph{complexity measures of Boolean functions}~\cite{BuhrmandeWolf}, which was recently generalized to the symmetric group and to the perfect matching scheme by Dafni et al.~\cite{DFLLV}. Using these techniques, Dafni et al.\ give a simple proof of the uniqueness part of \Cref{cnj:AK-sym}, as well as of its analog for the perfect matching scheme, for large $n$.

The chief tool used by Dafni et al., which we also employ in our proof, is \emph{certificate complexity}. Let $f\colon S_n \to \{0,1\}$ be a Boolean function, and let $\alpha \in S_n$ be such that $f(\alpha) = b$. A \emph{certificate} for $\alpha$ is a subset $\{i_1,\ldots,i_m\} \subseteq [n]$ such that $f(\beta) = b$ whenever $\beta(i_1) = \alpha(i_1),\ldots,\beta(i_m) = \alpha(i_m)$. The idea is that in order to verify that $f(\alpha) = b$, it suffices to check the value of $\alpha(i_1),\ldots,\alpha(i_m)$. The \emph{certificate complexity} of $\alpha$ is the minimum size of a certificate for $\alpha$, and the certificate complexity of $f$ is the maximum, over all $\alpha \in S_n$, of the certificate complexity of $\alpha$. We denote the certificate complexity of $f$ by $C(f)$.

Suppose that $f$ is the characteristic function of a $2$-intersecting subset of $S_n$. When $C(f) \leq n-2$, we give a structure theorem for $f$ which suffices to bound the size of the family away from $(n-2)!$ unless $C(f) = 2$, in which case $f$ is the characteristic function of a $2$-coset. A simple argument (using another complexity measure, \emph{sensitivity}) shows that if $n \geq 8$ and $f\colon S_n \to \{0,1\}$ has degree at most~$2$ then $C(f) \leq n-2$, allowing us to apply the preceding argument. Finally, we handle the case $n \le 7$ using exhaustive search, employing an algorithm for enumerating maximum cliques.

Similar techniques work in the case of the perfect matching scheme, with one complication: the proof of our structure theorem relies on the fact that Boolean degree~$1$ functions on the symmetric group have certificate complexity~$1$, but this fails for the perfect matching scheme. Using the maximality of maximum-size $2$-intersecting families, we are able to overcome this hurdle.

\paragraph{Future research} \Cref{cnj:AK-sym} is about families of permutations in which any two permutations agree on the images of $t$ points. A related question concerns $t$-setwise-intersecting families of permutations, in which any two permutations $\alpha,\beta$ agree on the image of a set of size $t$: $\{\alpha(i_1),\ldots,\alpha(i_t)\} = \{\beta(i_1),\ldots,\beta(i_t)\}$ for a set of size $t$. 

Ellis~\cite{EllisSetwise} showed that for every $t$, if $n$ is large enough then the maximum size of a $t$-setwise-intersecting subset of $S_n$ is $t! (n-t)!$, and this is achieved uniquely by families of the form
\[
 \bigl\{\alpha \in S_n : \{\alpha(i_1),\ldots,\alpha(i_t)\} = \{j_1,\ldots,j_t\}\bigr\},
\]
which we call \emph{$t$-setwise-cosets}.

Meagher and Razafimahatrata~\cite{MR21} showed that when $t = 2$, the upper bound holds for all $n \geq 2$, and moreover, if $n \geq 4$ and $f$ is the characteristic function of a $2$-setwise-intersecting family of size $2(n-2)!$, then $f$ has degree at most~$2$ and additionally satisfies $f^{=(n-2,1,1)} = 0$ (see \Cref{sec:degree-sym} for an explanation of this notation). In other words, $f^{=\lambda} \neq 0$ only for $\lambda = (n), (n-1,1), (n-2,2)$.

Behajaina, Maleki, Rasoamanana and Razafimahatratra~\cite{BMRR} showed that when $t = 3$, the upper bound holds for all $n \geq 11$, and moreover, if $f$ is the characteristic function of a $3$-setwise-intersecting family of size $6(n-3)!$, then $f^{=\lambda} \neq 0$ only for $\lambda = (n),(n-1,1),(n-2,2),(n-3,3)$.

In both cases, the authors were unable to classify the extremal families. Can we extend our techniques to show that the extremal families are $t$-setwise-cosets? The first step in this direction was already taken by the second author in her master's thesis~\cite{DafniThesis}, who showed that this holds for large $n$. One of the difficulties in obtaining bounds for small $n$ is taking advantage of the characterization of extremal $f$, which is stronger than a mere degree bound.

\smallskip

We mention in passing two more challenges. One is to extend the theory of setwise-intersecting families from permutations to perfect matchings. A $t$-setwise intersecting family of perfect matchings is one in which for any two perfect matchings $m_1,m_2$ there are two sets $A,B$ of size $t$ such that both $m_1,m_2$ match the vertices in $A$ to vertices in $B$. Another possible generalization is to require the existence of a single set $C$ of size $2t$ such that both $m_1$ and $m_2$ match vertices in $C$ to vertices in $C$.

The other is a common generalization of $t$-intersecting families and $t$-setwise-intersecting families. Given a partition $\lambda = \lambda_1,\ldots,\lambda_m$, a $\lambda$-intersecting family of permutations is one in which for any two permutations $\alpha,\beta$ there are disjoint sets $A_1,\ldots,A_m$ of sizes $\lambda_1,\ldots,\lambda_m$ such that $\alpha(A_1) = \beta(A_1),\ldots,\alpha(A_m)=\beta(A_m)$. A $t$-intersecting family corresponds to $\lambda = (1^t)$, and a $t$-setwise-intersecting family corresponds to $\lambda = (t)$.

\paragraph{Structure of the paper} We start with a few preliminaries in \Cref{sec:introduction}. Apart from introducing several key definitions and results, we also prove several results appearing in a more general form in~\cite{DFLLV}, in order to keep the paper self-contained. \Cref{thm:main-sym} is then proved in \Cref{sec:main-sym}, and \Cref{thm:main-pms} in \Cref{sec:main-pms}.

\paragraph{Acknowledgements} This project has received funding from the European Union's Horizon 2020 research and innovation programme under grant agreement No~802020-ERC-HARMONIC.

\section{Preliminaries} \label{sec:prel}

\subsection{Symmetric group} \label{sec:prel-sym}

For integer $n \geq 1$, the \emph{symmetric group} $S_n$ is the collection of all permutations of $[n] = \{1,\ldots,n\}$. We denote the identity permutation by $\id$.

A \emph{$t$-coset} of $S_n$ is a set of the form
\[
 \{ \alpha \in S_n : \alpha(i_1) = j_1, \ldots, \alpha(i_t) = j_t \},
\]
where $i_1,\ldots,i_t \in [n]$ are distinct and $j_1,\ldots,j_t \in [n]$ are distinct. A $t$-coset contains $(n-t)!$ permutations.

We use the term \emph{coset} for a $1$-coset. For $i,j \in [n]$, the coset $\coset{i}{j}$ consists of all permutations sending $i$ to $j$:
\[
 \coset{i}{j} = \{ \alpha \in S_n : \alpha(i) = j \}.
\]

Two permutations $\alpha,\beta \in S_n$ are \emph{$t$-intersecting} if there is a $t$-coset which contains both of them. Equivalently, the two permutations $t$-intersect if there are distinct indices $i_1,\ldots,i_t \in [n]$ such that $\alpha(i_1) = \beta(i_1),\ldots,\alpha(i_t) = \beta(i_t)$.

A subset $\mathcal{F} \subseteq S_n$ is \emph{$t$-intersecting} if every pair of permutations in $\mathcal{F}$ are $t$-intersecting.

\subsubsection{Degree} \label{sec:degree-sym}

We can represent permutations $\alpha \in S_n$ using $n^2$ variables $x_{ij}$ whose semantics are: $x_{ij} = 1$ if $\alpha(i) = j$, and $x_{ij} = 0$ otherwise. The \emph{degree} of a function $f\colon S_n \to \mathbb{R}$, denoted $\deg f$, is the minimal $d$ such that $f$ can be represented as a degree~$d$ polynomial over the variables $x_{ij}$.
For example, the characteristic function of a $t$-coset has degree at most~$t$, since it can be represented by the polynomial $x_{i_1j_1} \cdots x_{i_tj_t}$.

An equivalent way to define degree is via the representation theory of the symmetric group. Let $\mathbb{R}[S_n]$ be the vector space of all real-valued functions over the symmetric group. Representation theory gives an orthogonal decomposition (with respect to the inner product $\langle f,g \rangle = \sum_{\alpha \in S_n} f(\alpha) g(\alpha)$)
\[
 \mathbb{R}[S_n] = \bigoplus_{\lambda \vdash n} V^\lambda,
\]
where $\lambda$ goes over all integer partitions of $n$, and $V^\lambda$ are certain subspaces known as \emph{isotypic components}, which we define explicitly in \Cref{sec:degree-reduction-sym}. Accordingly, every function $f\colon S_n \to \mathbb{R}$ has a unique decomposition
\[
 f = \sum_{\lambda \vdash n} f^{=\lambda},
\]
where $f^{=\lambda} \in V^\lambda$. 
Ellis, Friedgut and Pilpel~\cite[Theorem 7]{EFP} showed that $\deg f$ is the maximal $d$ such that $f^{=\lambda} \neq 0$ for some $\lambda$ satisfying $\lambda_1 = n - d$. In particular, every function has degree at most $n-1$.

We can now formally state the main result of Meagher and Razafimahatratra~\cite[Corollary 5.5]{MR21}.

\begin{theorem} \label{thm:MR21}
If $n \geq 5$ then any $2$-intersecting family $\mathcal{F} \subseteq S_n$ contains at most $(n-2)!$ permutations. Furthermore, if $f\colon S_n \to \{0,1\}$ is the characteristic vector of a $2$-intersecting family of size $(n-2)!$, then $\deg f \leq 2$.
\end{theorem}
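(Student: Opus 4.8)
The plan is to recast the problem spectrally. Form the normal Cayley graph $\Gamma$ on the vertex set $S_n$ whose connection set is the union of conjugacy classes consisting of all permutations fixing at most one point; then $\alpha,\beta$ are adjacent in $\Gamma$ exactly when $\alpha^{-1}\beta$ fixes at most one point, i.e.\ when $\alpha$ and $\beta$ agree in at most one position, so that $\mathcal F \subseteq S_n$ is $2$-intersecting precisely when it is an independent set of $\Gamma$. Because the connection set is a union of conjugacy classes, $\Gamma$ lies in the Bose--Mesner algebra of the conjugacy-class association scheme on $S_n$: every symmetric matrix $A$ in that algebra is simultaneously diagonalized by the decomposition $\mathbb{R}[S_n] = \bigoplus_{\lambda \vdash n} V^\lambda$ into isotypic components, the eigenvalue on $V^\lambda$ being a normalized character sum $\eta^\lambda_A = \frac{1}{\chi^\lambda(\id)} \sum_\sigma c_\sigma \chi^\lambda(\sigma)$, where the $c_\sigma$ are the class weights defining $A$.

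The upper bound will come from the weighted Hoffman (ratio) bound. Suppose $A$ is such a matrix whose class weights are supported on permutations fixing at most one point; then $A$ has zero diagonal (the identity fixes $n \geq 2$ points), its off-diagonal support lies inside the edge set of $\Gamma$, its largest eigenvalue is the trivial one $d := \sum_\sigma c_\sigma$ on $V^{(n)}$, and if $\tau < 0$ is its least eigenvalue then every independent set satisfies $|\mathcal F| \leq \frac{n!\,(-\tau)}{d-\tau}$. To hit exactly $(n-2)! = n!/(n^2-n)$ one needs $\tau = -d/(n^2-n-1)$. The unweighted adjacency matrix of $\Gamma$ does not have this property (already for $n=5$ the required value $-\tau = d/19$ is incompatible with the possible denominators $\chi^\lambda(\id)$ of the eigenvalues), so the genuine task is to \emph{design} the weights $c_\sigma$: one solves the linear program requiring $\eta^\lambda_A \geq -d/(n^2-n-1)$ for all $\lambda \neq (n)$, with equality forced on — and, crucially, only on — the partitions with $\lambda_1 \geq n-2$, namely $(n-1,1)$, $(n-2,2)$, $(n-2,1,1)$. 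Producing such a weighting uniformly for all $n \geq 5$ and verifying the sign and slackness conditions is the heart of the argument; it requires a workable expression for $\chi^\lambda$ on the relevant cycle types (via the Murnaghan--Nakayama rule or the branching rule) together with estimates sharp enough to rule out any $\lambda$ with $\lambda_1 \leq n-3$ attaining the minimum.

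Granting such an $A$, the inequality $|\mathcal F| \leq (n-2)!$ is immediate, and the degree statement follows from the equality case. If $|\mathcal F| = (n-2)!$ then $1_\mathcal F - \frac{|\mathcal F|}{n!}\mathbf 1$ is a $\tau$-eigenvector of $A$, hence lies in $\bigoplus_{\lambda : \eta^\lambda_A = \tau} V^\lambda$; by the choice of weighting this is contained in $\bigoplus_{\lambda_1 \geq n-2} V^\lambda$, and adding back the constant (which lives in $V^{(n)}$) gives $1_\mathcal F \in \bigoplus_{\lambda_1 \geq n-2} V^\lambda$. By the Ellis--Friedgut--Pilpel characterization of degree in terms of isotypic support, this is exactly the assertion $\deg 1_\mathcal F \leq 2$.

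The main obstacle, as indicated, is the construction and analysis of the weighting $A$: solving the underlying LP over the conjugacy-class scheme and proving optimality for \emph{every} $n \geq 5$, not just asymptotically. A delicate secondary point is confirming that the least eigenvalue is attained at no partition with $\lambda_1 \leq n-3$, since otherwise the degree conclusion is lost; this is where the bounds on the character sums must be tight rather than crude, and where the hypothesis $n \geq 5$ enters (the borderline small cases can be disposed of by direct computation). Once this spectral data is in hand, both conclusions of the theorem follow formally.
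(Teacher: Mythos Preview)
This theorem is not proved in the present paper at all; it is quoted verbatim as the main result of Meagher and Razafimahatratra \cite[Corollary~5.5]{MR21} and used as a black box. So there is no ``paper's own proof'' to compare against. Your outline is, however, an accurate sketch of the strategy that \cite{MR21} actually uses: build a class-constant matrix $A$ supported on permutations with at most one fixed point, apply the weighted Hoffman (ratio) bound to obtain $|\mathcal F|\le (n-2)!$, and read off $\deg 1_{\mathcal F}\le 2$ from the equality case via the Ellis--Friedgut--Pilpel identification of degree with isotypic support.

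That said, what you have written is a plan, not a proof, and you acknowledge this yourself. You correctly isolate the crux --- producing weights so that the least eigenvalue of $A$ is exactly $-d/(n(n-1)-1)$ and is attained only on $V^\lambda$ with $\lambda_1\ge n-2$ --- and then explicitly defer it (``Granting such an $A$\ldots'', ``The main obstacle, as indicated, is the construction and analysis of the weighting $A$''). That deferred step is essentially the entire content of \cite{MR21}: the substantive work there is to write down a concrete weighting and push through the character-sum estimates uniformly for all $n\ge 5$, not merely asymptotically. Until that is carried out, neither the upper bound nor the degree conclusion has been established; your final paragraph candidly says as much. If your goal is to reproduce the theorem rather than summarize its architecture, you would need to supply (or at least cite) the explicit weighting and the eigenvalue verification from \cite{MR21}.
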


We will also need a result of Ellis, Friedgut and Pilpel~\cite{EFP} classifying degree~$1$ functions.

\begin{theorem}[{\cite[Corollary 2]{EFP}}] \label{thm:degree1-classification-sym}
If $f\colon S_n \to \{0,1\}$ has degree at most~$1$ then either
\[
 f = \sum_{j \in J} x_{ij}
\]
for some $i \in [n]$ and $J \subseteq [n]$, or
\[
 f = \sum_{i \in I} x_{ij}
\]
for some $I \subseteq [n]$ and $j \in [n]$.
\end{theorem}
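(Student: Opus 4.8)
The plan is to encode $f$ as a matrix, read off a combinatorial structure, and classify that structure. Because a degree-$\le 1$ polynomial in the $x_{ij}$ is a linear combination of them (the constant term is absorbed via $1=\sum_j x_{1j}$), we may write $f(\alpha)=\sum_i M_{i,\alpha(i)}$ for a real $n\times n$ matrix $M$, which is determined only up to adding a ``potential'' $M_{ij}\mapsto M_{ij}+u_i+v_j$ — an operation that shifts $f$ by the constant $\sum_i u_i+\sum_j v_j$. The one fact I would use about $M$ is a discrete bounded-difference property: if $\alpha,\beta$ agree outside $\{i_1,i_2\}$ while $\alpha(i_1)=\beta(i_2)=j_1$ and $\alpha(i_2)=\beta(i_1)=j_2$, then $f(\alpha)-f(\beta)=M_{i_1j_1}+M_{i_2j_2}-M_{i_1j_2}-M_{i_2j_1}$ lies in $\{-1,0,1\}$ since $f$ is $\{0,1\}$-valued; that is, every $2\times2$ ``mixed difference'' of $M$ lies in $\{-1,0,1\}$.

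I would then normalize. Subtracting the potential that kills a chosen row $r$ and column $c$, the bounded-difference property (with one index $r$ and one index $c$) forces every entry of $M$ into $\{-1,0,1\}$, and (with a single index $r$, resp.\ $c$) forces each row and column of $M$ to be sign-constant among its nonzero entries; adding $1$ to each row containing a $-1$ then makes $M$ a $0/1$ matrix, at the cost of one more additive constant. If the resulting $f$ is constant we are done (this is $J=\emptyset$ or $J=[n]$), so assume $M\in\{0,1\}^{n\times n}$. Now the bounded-difference property says $M$ has no $2\times2$ permutation submatrix, hence the row supports of $M$ are linearly ordered by inclusion, and so are the column supports; after relabelling rows and columns — which preserves each of the two claimed forms — we may take $M_{ij}=[\,j\le\lambda_i\,]$ for some weakly increasing $\lambda_1\le\dots\le\lambda_n$ in $\{0,\dots,n\}$. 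Thus $f(\alpha)=c_\lambda(\alpha)-\min_\beta c_\lambda(\beta)$ with $c_\lambda(\alpha):=\#\{i:\alpha(i)\le\lambda_i\}$, and Booleanness of $f$ is precisely the statement $\max_\alpha c_\lambda-\min_\alpha c_\lambda\le 1$. (An alternative route to this point uses Birkhoff--von~Neumann: $f$ extends to an affine function on the Birkhoff polytope, so $f^{-1}(1)$ and $f^{-1}(0)$ are faces, i.e.\ the sets of permutation matrices dominated by certain $0/1$ patterns, and the two patterns are heavily constrained by the fact that the faces partition the vertex set.)

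It remains to classify the $\lambda$ with $\max_\alpha c_\lambda-\min_\alpha c_\lambda\le1$. By Hall's theorem, $\max_\alpha c_\lambda(\alpha)=n-\max_k\!\bigl(\#\{i:\lambda_i\le k\}-k\bigr)$ and $\min_\alpha c_\lambda(\alpha)=\max_k\!\bigl(\#\{i:\lambda_i\ge k\}+k-n\bigr)$, so the hypothesis turns into an arithmetic condition on the multiplicities $n_k:=\#\{i:\lambda_i=k\}$. When at most one $\lambda_i$ lies strictly between $0$ and $n$, $c_\lambda-\min c_\lambda$ depends only on $\alpha(i)$ for that row $i$, giving $f=\sum_{j\in J}x_{ij}$; when several $\lambda_i$ are intermediate, the arithmetic condition is much stronger and I expect it to force $c_\lambda-\min c_\lambda$ to depend only on $\alpha^{-1}(j)$ for a single column $j$, giving $f=\sum_{i\in I}x_{ij}$. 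The symmetries $f\mapsto 1-f$ and the substitution $\alpha\mapsto\alpha^{-1}$, both of which preserve the set of functions of the two claimed forms, can be used to cut down the casework. I expect this multi-intermediate case — extracting an exact combinatorial shape from a numerical inequality — to be the only real obstacle; all the preceding steps are routine.
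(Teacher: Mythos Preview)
The paper does not prove this theorem; it is quoted from \cite{EFP}, and the introduction notes that the original argument is polyhedral, resting on the Birkhoff--von~Neumann theorem (precisely the alternative route you sketch in parentheses). Your primary approach is different and more elementary: rather than invoking the face structure of the Birkhoff polytope, you extract everything from the $2\times2$ ``mixed difference'' constraint. Through the staircase reduction this is entirely correct --- the gauge-fixing, the $\{-1,0,1\}$ bound on every $2\times2$ minor, the sign-constancy of rows, and the resulting total order on row supports all go through exactly as you describe, and the reduction to $M_{ij}=[j\le\lambda_i]$ with $\lambda_1\le\cdots\le\lambda_n$ is clean. What this buys over the polyhedral argument is that no convexity is invoked; what it costs is that the final classification must be done by hand rather than read off from known facts about faces.

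The acknowledged gap --- the ``multi-intermediate'' case --- is real, and it is where the content lies; you should not expect it to be a formality. Here is one way to close it. If all intermediate $\lambda_i$ share a common value $m\in[1,n-1]$, occupying positions $a+1,\dots,a+c$ with $c\ge2$, then $c_\lambda(\alpha)-\text{const}=\#\{a<i\le a+c:\alpha(i)\le m\}$; since $\alpha$ can send $\{a+1,\dots,a+c\}$ to an arbitrary $c$-subset of $[n]$, this count ranges over an interval of width $\min(c,m)-\max(0,c+m-n)$, and forcing width $\le1$ leaves only $m\in\{1,n-1\}$ (a column dictator in $\alpha^{-1}(1)$ or $\alpha^{-1}(n)$) or $c\ge n-1$ (a row dictator or a constant). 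If instead two intermediate values differ, say $1\le\lambda_p<\lambda_q\le n-1$, you must exhibit range $\ge2$ directly: when $\lambda_q\le n-2$, take $\alpha$ with $\alpha(p)\in[1,\lambda_p]$ and $\alpha(q)\in(\lambda_p,\lambda_q]$, and $\beta$ agreeing with $\alpha$ off $\{p,q\}$ but with $\beta(p),\beta(q)\in(\lambda_q,n]$; the boundary cases $\lambda_q=n-1$ and $\lambda_p=1$ need a short separate argument, or can be absorbed via the symmetries $f\leftrightarrow1-f$ and $\alpha\leftrightarrow\alpha^{-1}$ that you already flagged. So the plan is sound, but this endgame is the proof, not a footnote, and should be written out in full.
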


See also \cite[Theorem 6.1]{DFLLV} for an alternative (but very similar) proof.

\subsubsection{Certificate complexity} \label{sec:certificate-sym}

Fix $n$. A \emph{certificate} is a set $C = \{(i_1,j_1),\ldots,(i_m,j_m)\} \subseteq [n] \times [n]$. A permutation $\alpha \in S_n$ \emph{satisfies} the certificate $C$ if $\alpha(i_1) = j_1,\ldots,\alpha(i_m) = j_m$. The set of permutations satisfying a certificate $C$ is either empty or a $|C|$-coset.

We sometimes think of a permutation $\alpha \in S_n$ as the certificate $\{(1,\alpha(1)),\ldots,(n,\alpha(n))\}$, which we call the \emph{certificate representation} of $\alpha$.

A \emph{Boolean function} is a function $f\colon S_n \to \{0,1\}$. Given $\alpha \in S_n$ such that $f(\alpha) = b$, a \emph{certificate for $\alpha$} (with respect to $f$) is a certificate $C$ such that:
\begin{enumerate}[(i)]
\item $\alpha$ satisfies $C$.
\item If $\beta$ satisfies $C$ then $f(\beta) = b$.
\end{enumerate}

Intuitively, in order to certify that $f(\alpha) = b$, it suffices to verify that $\alpha$ satisfies $C$. 

The \emph{certificate complexity} of $\alpha$, denoted $C(f,\alpha)$, is the minimum size of a certificate for $\alpha$. A certificate for $\alpha$ of this size is known as a \emph{minimum certificate}.
Since every permutation is determined by its values on the points $1,\ldots,n-1$, the certificate complexity of $\alpha$ is always at most $n-1$.

The \emph{certificate complexity} of $f$, denoted $C(f)$, is $\max_{\alpha \in S_n} C(f,\alpha)$. The certificate complexity of $f$ is always at most $n-1$.

Dafni et al.~\cite[Theorem 3.1]{DFLLV} showed that certificate complexity is polynomially related to the degree: $C(f) = O((\deg f)^8)$ and $\deg f = O(C(f)^4)$.

In the special case of degree~$1$, \Cref{thm:degree1-classification-sym} implies that $C(f) \leq 1$.

\begin{lemma} \label{lem:degree1-cert-sym}
If $f\colon S_n \to \{0,1\}$ has degree at most~$1$ then $C(f) \leq 1$.
\end{lemma}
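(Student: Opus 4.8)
The plan is to read the conclusion off directly from the classification of degree~$1$ Boolean functions in \Cref{thm:degree1-classification-sym}. By that theorem, $f$ has one of two dual forms; I would treat the first, $f = \sum_{j \in J} x_{ij}$ for some fixed $i \in [n]$ and $J \subseteq [n]$, and note that the second form $f = \sum_{i \in I} x_{ij}$ is handled identically. (Alternatively, the second case reduces to the first by applying it to the function $\alpha \mapsto f(\alpha^{-1})$ and observing that inverting a permutation transposes every certificate $\{(i_1,j_1),\ldots,(i_m,j_m)\}$ to $\{(j_1,i_1),\ldots,(j_m,i_m)\}$, so certificate complexity is unchanged.)

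Next I would unwind what the form $f = \sum_{j \in J} x_{ij}$ says pointwise. For a permutation $\alpha \in S_n$, exactly one of the variables $x_{i1},\ldots,x_{in}$ takes the value~$1$ on $\alpha$, namely $x_{i\alpha(i)}$; hence $f(\alpha) = 1$ if $\alpha(i) \in J$ and $f(\alpha) = 0$ otherwise. In other words, $f(\alpha)$ depends only on $\alpha(i)$.

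Finally, fix any $\alpha \in S_n$ with $f(\alpha) = b$ and consider the size-one certificate $C = \{(i,\alpha(i))\}$. Clearly $\alpha$ satisfies $C$, and if $\beta$ satisfies $C$ then $\beta(i) = \alpha(i)$, so $\beta(i) \in J$ iff $\alpha(i) \in J$, whence $f(\beta) = f(\alpha) = b$. Thus $C$ is a certificate for $\alpha$ and $C(f,\alpha) \le 1$; taking the maximum over $\alpha \in S_n$ yields $C(f) \le 1$. (If $f$ is constant one may even take $C = \emptyset$, so $C(f) = 0$; otherwise $C(f) = 1$.)

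There is essentially no obstacle here: all of the substantive work is already contained in \Cref{thm:degree1-classification-sym}, and the only point that requires a moment's care is verifying that its two cases are genuinely symmetric under inversion, so that it suffices to treat one of them.
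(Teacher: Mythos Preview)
Your proposal is correct and follows essentially the same approach as the paper: both invoke \Cref{thm:degree1-classification-sym} and then observe that in each of the two cases every permutation $\alpha$ has the size-one certificate $\{(i,\alpha(i))\}$ (respectively $\{(\alpha^{-1}(j),j)\}$). Your added remark that the second case reduces to the first via $\alpha \mapsto f(\alpha^{-1})$ is a nice touch but not needed, since the paper simply writes down the certificate directly in both cases.
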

\begin{proof}
According to \Cref{thm:degree1-classification-sym}, either $f = \sum_{j \in J} x_{ij}$ for some $i \in [n]$ and $J \subseteq [n]$, or $f = \sum_{i \in I} x_{ij}$ for some $I \subseteq [n]$ and $j \in [n]$. In the former case, every $\alpha \in S_n$ has the certificate $\{(i,\alpha(i))\}$, and in the latter case, every $\alpha \in S_n$ has the certificate $\{(\alpha^{-1}(j),j)\}$.
\end{proof}

\medskip

The following lemma, which essentially follows from~\cite[Lemma 7.3]{DFLLV}, shows that if $\mathcal{F}$ is a $2$-intersecting family then minimum certificates of any two permutations in $\mathcal{F}$ must $2$-intersect, unless $C(f) = n-1$.

\begin{lemma} \label{lem:2-int-coset-sym}
Let $f\colon S_n \to \{0,1\}$ be the characteristic function of a $2$-intersecting family, and suppose that $C(f) \leq n-2$. If $f(\alpha) = f(\beta) = 1$ and $C_\alpha,C_\beta$ are minimum certificates for $\alpha,\beta$ (respectively), then $|C_\alpha \cap C_\beta| \geq 2$.
\end{lemma}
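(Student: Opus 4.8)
The plan is a proof by contradiction. Suppose $|C_\alpha \cap C_\beta| \le 1$. Since $C_\alpha$ is a certificate for $\alpha$ and $f(\alpha)=1$, every permutation satisfying $C_\alpha$ lies in $\mathcal{F}$, and likewise every permutation satisfying $C_\beta$ lies in $\mathcal{F}$. So it suffices to produce a permutation $\alpha'$ satisfying $C_\alpha$ and a permutation $\beta'$ satisfying $C_\beta$ that agree on at most one coordinate: these are two members of $\mathcal{F}$, distinct since (as $n \ge 2$) they must disagree somewhere, and they do not $2$-intersect, contradicting the hypothesis that $\mathcal{F}$ is $2$-intersecting.

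For the setup, let $D_\alpha$ and $R_\alpha$ be the sets of first, respectively second, coordinates occurring in $C_\alpha$. Since the permutations satisfying $C_\alpha$ form a nonempty coset, $C_\alpha$ is a partial injection from $D_\alpha$ to $R_\alpha$; write $C_\alpha(i)$ for the image of $i \in D_\alpha$, and define $D_\beta, R_\beta, C_\beta(\cdot)$ analogously. Minimality of the certificates gives $|D_\alpha| = |C_\alpha| = C(f,\alpha) \le C(f) \le n-2$, and similarly $|D_\beta| \le n-2$, so both $[n]\setminus D_\alpha$ and $[n]\setminus D_\beta$ have size at least $2$. The only combinatorial ingredient is the following elementary fact: if $P$ and $Q$ are sets with $|P|=|Q|=N \ge 2$ and $F$ is a partial injection from $P$ to $Q$, then there is a bijection $\phi\colon P \to Q$ with $\phi(p) \ne F(p)$ whenever $F$ is defined at $p$ --- simply extend $F$ to a bijection and compose it with a derangement of $Q$.

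We build $\alpha'$ and $\beta'$ in two steps. First, put $\alpha' = C_\alpha$ on $D_\alpha$, and on $[n]\setminus D_\alpha$ use the elementary fact, with $N = n - |D_\alpha| \ge 2$, to choose a bijection onto $[n]\setminus R_\alpha$ avoiding the partial injection that sends $i \in ([n]\setminus D_\alpha)\cap D_\beta$ to $C_\beta(i)$ (the $i$ with $C_\beta(i)\in R_\alpha$ may be dropped, since avoidance is then automatic). This produces a permutation $\alpha'$ satisfying $C_\alpha$ with the property that $\alpha'(i) \ne C_\beta(i)$ for every $i \in D_\beta \setminus D_\alpha$. Then, put $\beta' = C_\beta$ on $D_\beta$, and on $[n]\setminus D_\beta$ use the elementary fact again, with $N = n - |D_\beta| \ge 2$, to choose a bijection onto $[n]\setminus R_\beta$ avoiding the partial injection $i \mapsto \alpha'(i)$. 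Examining the three types of coordinate: for $i \notin D_\beta$ we have $\beta'(i) \ne \alpha'(i)$ by the second step; for $i \in D_\beta\setminus D_\alpha$ we have $\beta'(i) = C_\beta(i) \ne \alpha'(i)$ by the first step; and for $i \in D_\alpha\cap D_\beta$ the values $\alpha'(i)=C_\alpha(i)$ and $\beta'(i)=C_\beta(i)$ coincide exactly when $(i,C_\alpha(i)) \in C_\alpha\cap C_\beta$. Hence $\alpha'$ and $\beta'$ agree in precisely $|C_\alpha\cap C_\beta| \le 1$ coordinates, which is the contradiction sought.

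I expect no serious obstacle, but two points need care. The first is the bookkeeping in the final step: one must check that the choices made in the two steps cannot create any coordinate of agreement other than the ``forced'' ones arising from $C_\alpha \cap C_\beta$. The second is the single place where the hypothesis $C(f) \le n-2$ is used, namely to guarantee $n - |D_\alpha| \ge 2$ and $n - |D_\beta| \ge 2$ so that the derangement step applies; this is genuinely needed, since if $|C_\alpha| = n-1$ then the extensions of $C_\alpha$ form a single permutation and the construction collapses, and in fact the statement can fail when $C(f)=n-1$. This lemma is the specialization to $2$-intersecting families of the more general principle of \cite[Lemma 7.3]{DFLLV}.
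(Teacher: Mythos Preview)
Your proposal is correct and follows essentially the same approach as the paper: both arguments construct $\alpha'$ extending $C_\alpha$ so that $\alpha'$ agrees with $C_\beta$ only on $C_\alpha\cap C_\beta$, and then $\beta'$ extending $C_\beta$ so that $\beta'$ agrees with $\alpha'$ only on $C_\alpha\cap C_\beta$, using in both steps that $n-|C_\alpha|,\,n-|C_\beta|\ge 2$ so a derangement-type extension exists. The only cosmetic differences are that the paper states the conclusion directly rather than by contradiction, and carries out the derangement step explicitly via a cyclic shift rather than invoking your ``elementary fact''.
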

\begin{proof}
We will show that there exist $\alpha' \in S_n$ satisfying $C_\alpha$ and $\beta' \in S_n$ satisfying $C_\beta$ such that $\alpha' \cap \beta' = C_\alpha \cap C_\beta$, where we identify $\alpha',\beta'$ with their certificate representations. Since $f(\alpha') = f(\beta') = 1$ and $f$ is the characteristic function of a $2$-intersecting family, we have $|\alpha' \cap \beta'| \geq 2$, and so $|C_\alpha \cap C_\beta| \geq 2$.

We start by constructing $\alpha'$ satisfying $C_\alpha$ such that $\alpha' \cap C_\beta = C_\alpha \cap C_\beta$. Let $C_\alpha = \{(i_1,j_1),\ldots,(i_m,j_m)\}$, where $m \leq n-2$, let $i'_1,\ldots,i'_{n-m}$ be the $i$-indices not mentioned in $C_\alpha$, and let $j'_1,\ldots,j'_{n-m}$ be the $j$-indices not mentioned in $C_\alpha$.

For each $i'_s$, there is at most one $j'_t$ such that $(i'_s,j'_t) \in C_\beta$. We can therefore arrange the indices in such a way that if $(i'_s,j'_t) \in C_\beta$ then $s = t$. We can now define $\alpha'$ explicitly:
\[
 \alpha'(i_1) = j_1, \ldots, \alpha'(i_m) = j_m,
 \alpha'(i'_1) = j'_2, \ldots, \alpha'(i'_{n-m-1}) = j'_{n-m}, \alpha'(i'_{n-m}) = j'_1.
\]

The same argument (replacing $C_\alpha,C_\beta$ by $C_\beta,\alpha'$) shows that we can find $\beta'$ satisfying $C_\beta$ such that $\alpha' \cap \beta' = \alpha' \cap C_\beta = C_\alpha \cap C_\beta$, completing the proof.
\end{proof}

When $C(f) = n-1$, the conclusion of \Cref{lem:2-int-coset-sym} indeed fails. For example, consider the family $\mathcal{F} \subseteq S_5$ given by $\mathcal{F} = \{ \id, (1\;2\;3) \}$, and the minimum certificates $\{(1,1),(2,2),(3,3),(4,4)\}$ and $\{(1,2),(2,3),(3,1),(4,4)\}$.

\smallskip

Using the concept of \emph{sensitivity}, we can rule out the case $C(f) = n-1$ for functions of degree at most~$2$. The proof is based on the technique used to prove \cite[Lemma 3.6]{DFLLV}.

\begin{lemma} \label{lem:C-bound-sym}
If $n \geq 8$ and $f\colon S_n \to \{0,1\}$ has degree at most~$2$, then $C(f) \leq n-2$.
\end{lemma}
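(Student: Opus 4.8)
The plan is to argue by contradiction: assume $C(f) = n-1$, fix $\alpha \in S_n$ with $C(f,\alpha) = n-1$, and distill from $f$ a Boolean function on a small Boolean cube that has degree at most~$2$ yet is sensitive to \emph{every} coordinate at a point --- which is impossible once the cube has dimension~$4$.

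First I would reduce to $\alpha = \id$. Setting $g(\pi) := f(\alpha\pi)$ only relabels the second index of each variable $x_{ij}$ (namely $x_{ij}(\alpha\pi) = x_{i,\alpha^{-1}(j)}(\pi)$), so $\deg g = \deg f \le 2$; and $C \subseteq [n]\times[n]$ is a certificate for $\alpha$ with respect to $f$ if and only if $\{(i,\alpha^{-1}(j)) : (i,j) \in C\}$ is a certificate of the same size for $\id$ with respect to $g$, so $C(g,\id) = n-1$. Every certificate for $\id$ lies inside $\{(i,i) : i \in [n]\}$, hence has the form $\{(i,i) : i \in S\}$, and it is valid precisely when $g$ is constant on the pointwise stabilizer of $S$. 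Applying this with $|S| = n-2$, i.e.\ $S = [n]\setminus\{a,a'\}$, whose pointwise stabilizer is $\{\id,(a\,a')\}$: since $C(g,\id) \ge n-1$ this size-$(n-2)$ set is not a valid certificate, so $g((a\,a')) \ne g(\id)$. Writing $b := g(\id)$, we conclude $g((a\,a')) = 1-b$ for \emph{every} transposition.

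Next, using $n \ge 8$, I would fix four disjoint pairs $P_k = \{2k-1,2k\}$ ($k \in [4]$) and, for $T \subseteq [4]$, set $\pi_T := \prod_{k \in T}(2k-1\ 2k)$ and $h(\mathbf{1}_T) := g(\pi_T)$, defining $h \colon \{0,1\}^4 \to \{0,1\}$. On the set $\{\pi_T : T \subseteq [4]\}$ each variable $x_{ij}$ restricts to one of $0$, $1$, $t_k$, $1-t_k$, where $t_k$ denotes the indicator of $k \in T$ --- an affine function of at most one of the four coordinates --- so each degree-$\le 2$ monomial in the $x_{ij}$ restricts to a polynomial of degree $\le 2$ in $t_1,\ldots,t_4$, and therefore the (unique) multilinear expansion of $h$ has degree at most~$2$. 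On the other hand $h(0^4) = b$ while $h(e_k) = g((2k-1\ 2k)) = 1-b$ for each $k$, so $h$ is sensitive to all four coordinates at $0^4$.

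Finally I would close with a short computation on the cube. Writing $h(x) = b + \sum_k d_k x_k + \sum_{k<l} c_{kl} x_k x_l$, the relations $h(e_k) = 1-b$ give $d_k = 1-2b$ for all $k$. Evaluating $h$ at $e_k+e_l$, at $e_k+e_l+e_m$, and at $\mathbf{1}$, and using that $h$ is $\{0,1\}$-valued, forces in the case $b=0$ first each $c_{kl} \in \{-2,-1\}$, then each $c_{kl} = -1$, and then $h(\mathbf{1}) = 4 - 6 = -2$, a contradiction; the case $b=1$ is symmetric and yields $h(\mathbf{1}) = 1 - 4 + 6 = 3$. Hence no such $\alpha$ exists and $C(f) \le n-2$. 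I do not expect a genuine obstacle: the two points needing care are checking that the restriction $h$ really has degree $\le 2$ (which rests on each $x_{ij}$ becoming affine in a single coordinate on the chosen set of permutations) and observing that four disjoint pairs exist exactly when $n \ge 8$ --- which is why the hypothesis takes this form and why the excluded range $n \le 7$ must be handled separately.
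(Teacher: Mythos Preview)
Your proof is correct and follows essentially the same strategy as the paper: assume $C(f,\id)=n-1$, deduce that every transposition flips the value of $f$, embed a $4$-dimensional Boolean cube via four disjoint transpositions (using $n\ge 8$), observe that the restriction has degree at most~$2$ because each $x_{ij}$ becomes affine in a single cube coordinate, and then derive a contradiction from the full sensitivity at the origin.

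The one genuine difference is in the endgame on $\{0,1\}^4$. The paper disposes of the degree-$2$/sensitivity-$4$ impossibility by a SAGE exhaustive search over all $2^{16}$ functions (and separately sketches an influence-based argument for experts). You instead give a clean elementary calculation: from $d_k=1-2b$ and the $\{0,1\}$ constraints at weights $2$ and $3$ you pin down all $c_{kl}$, and then $h(\mathbf 1)\in\{-2,3\}$ gives the contradiction. This is self-contained and avoids both the computer check and the appeal to influence inequalities, at the cost of a few lines of casework; either approach is fine here, but yours makes the lemma fully human-verifiable.
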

\begin{proof}
The proof is by contradiction. Suppose that $n \geq 8$, that $f\colon S_n \to \{0,1\}$ has degree at most~$2$, and that $C(f) = n-1$. Without loss of generality, $C(f,\id) = n-1$ and $f(\id) = 0$. Then $f((i\;j)) = 1$  for all $i \neq j \in [n]$ (here $(i\;j)$ is the transposition switching $i$ and $j$), since otherwise $\{(k,k) : k \neq i,j\}$ would be a certificate for $\id$ of size $n-2$.

We construct a function $g\colon \{0,1\}^4 \to \{0,1\}$ as follows:
\[
 g(y_1,y_2,y_3,y_4) = f\bigl((1\;2)^{y_1} (3\;4)^{y_2} (5\;6)^{y_3} (7\;8)^{y_4}\bigr).
\]
Here $(i\;j)^0 = \id$, $(i\;j)^1 = (i\;j)$, and the input to $f$ is the product of four powers of this kind.

Since $f$ has degree at most~$2$, it can be written as a polynomial of degree at most~$2$ in the variables $x_{ij}$. Given $y_1,y_2,y_3,y_4$, the value of the variables $x_{ij}$ is:
\begin{align*}
    &x_{11} = x_{22} = 1-y_1 && x_{12} = x_{21} = y_1 \\
    &x_{33} = x_{44} = 1-y_2 && x_{34} = x_{43} = y_2 \\
    &x_{55} = x_{66} = 1-y_3 && x_{56} = x_{65} = y_3 \\
    &x_{77} = x_{88} = 1-y_4 && x_{78} = x_{87} = y_4 
\end{align*}
All other variables are assigned zero. This shows that $g$ can be expressed as a polynomial of degree at most~$2$ in the variables $y_1,y_2,y_3,y_4$. We say that $g$ has \emph{degree at most~$2$}. 

By construction, the function $g$ satisfies the following constraints:
\begin{align*}
    &g(0,0,0,0) = f(\id) = 0 \\
    &g(1,0,0,0) = f((1\;2)) = 1 \\
    &g(0,1,0,0) = f((3\;4)) = 1 \\
    &g(0,0,1,0) = f((5\;6)) = 1 \\
    &g(0,0,0,1) = f((7\;8)) = 1 
\end{align*}
We say that $g$ has \emph{sensitivity $4$}.

There are only $2^{16}$ many functions from $\{0,1\}^4$ to $\{0,1\}$. For each function, we can check whether it has degree at most~$2$ by solving linear equations, since each function has a unique representation as a multilinear polynomial in $y_1,y_2,y_3,y_4$ (this is a basic fact in Boolean function analysis). Going over all such functions in SAGE~\cite{sagemath}, we find out that none of them has sensitivity~$4$, a contradiction. The relevant code appears in \Cref{apx:C-bound-sym}. 
\end{proof}

For experts in Boolean function analysis, here is an alternative proof that $g$ cannot have sensitivity~$4$. Since $\deg g \leq 2$, every influential variable has influence at least $1/2$~\cite[Proposition 3.6]{ODonnell}. If $g$ has sensitivity~$4$, then it depends on $4$ variables, and so its total influence is~$2$. Since $g$ has degree~$2$, this can only happen if $g$ is homogeneous of degree~$2$. But in that case, the sensitivity of $g$ at every point is exactly~$2$ \cite[Proposition 3.7]{FHKL}.

The bound on the sensitivity cannot be improved: the Boolean function
\[
 g(y_1,y_2,y_3) = y_1 y_2 y_3 + (1 - y_1) (1 - y_2) (1 - y_3)
\]
has degree~$2$ and satisfies
\[
 g(0,0,0) = 1, \quad
 g(1,0,0) = g(0,1,0) = g(0,0,1) = 0.
\]

\subsubsection{Degree reduction} \label{sec:degree-reduction-sym}

Let $\mathcal{F} \subseteq S_n$. We denote the restriction of $\mathcal{F}$ to the coset $\coset{i}{j}$ by $\mathcal{F}|_{\coset{i}{j}}$. We can think of $\mathcal{F}|_{\coset{i}{j}}$ as a subset of $S_{n-1}$. Formally speaking, we can think of $S_n$ as the set of all injections from $[n]$ to $[n]$. The restriction of $S_n$ to the coset $\coset{i}{j}$ is isomorphic to the set of all injections from $[n] \setminus \{i\}$ to $[n] \setminus \{j\}$, which is the same as $S_{n-1}$ up to renumbering.

Let $f$ be the characteristic function of $\mathcal{F}$. The definition of degree using polynomials shows that $\deg f|_{\coset{i}{j}} \leq \deg f$, where $f$ is the characteristic function of $\mathcal{F}|_{\coset{i}{j}}$. Indeed, given a polynomial representing $f$, we can obtain a polynomial representing $f|_{\coset{i}{j}}$ by substituting $x_{ij} = 1$ and $x_{ij'} = x_{i'j} = 0$ for any $i' \neq i$ and $j' \neq j$.

The following crucial lemma, which forms part of the proof of~\cite[Lemma 5.7]{DFLLV}, states that if $\mathcal{F}$ is contained in $\coset{i}{j}$, then the degree of $f$ strictly decreases when restricting to $\coset{i}{j}$.

\begin{lemma} \label{lem:degree-reduction-sym}
Suppose that $f\colon S_n \to \{0,1\}$ is the characteristic function of a family which is a subset of the coset $\coset{i}{j}$. Then $\deg f|_{\coset{i}{j}} \leq \max( \deg f - 1, 0 )$.
\end{lemma}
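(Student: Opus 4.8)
The idea is to use the representation-theoretic characterization of degree: $\deg f$ equals the largest $d$ such that $f^{=\lambda} \neq 0$ for some $\lambda$ with $\lambda_1 = n-d$. So if $\deg f = d$ (assume $d \geq 1$, otherwise there is nothing to prove since the empty family restricts to the empty family and the full coset restricts to the constant $1$ on $S_{n-1}$), we must show that after restricting to $\coset{i}{j}$ and viewing the result as a function on $S_{n-1}$, every isotypic component $V^\mu$ with $\mu_1 = (n-1) - d = n - d - 1$ vanishes; equivalently, the restriction $f|_{\coset{i}{j}}$, as a function on $S_{n-1}$, has no component in partitions $\mu \vdash n-1$ with $\mu_1 \leq n-d-1$. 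Without loss of generality (relabeling the domain and codomain) we may take $i = j = n$, so that $\coset{n}{n}$ is literally a copy of $S_{n-1}$ acting on $[n-1]$.

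**Key steps.** First I would set up the branching map precisely: restriction of functions from $S_n$ to the subgroup $S_{n-1}$ (stabilizer of $n$) corresponds, on the level of isotypic components, to the branching rule — $V^\lambda|_{S_{n-1}}$ decomposes into $\bigoplus_{\mu} V^\mu$ where $\mu$ ranges over partitions of $n-1$ obtained from $\lambda$ by deleting one box. But the map here is not mere restriction of a function on all of $S_n$; it is restriction of $f$ to the \emph{coset} $\coset{n}{n}$, which is exactly restriction to the subgroup $S_{n-1}$ — so ordinary branching applies. The crucial input is the hypothesis that $\mathcal{F} \subseteq \coset{n}{n}$, i.e. $f$ is supported on the subgroup $S_{n-1}$. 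Then I would express $f^{=\lambda}$ in terms of $f$ via the projection $f^{=\lambda} = \frac{\dim V^\lambda}{n!} \sum_{\sigma} \chi^\lambda(\sigma^{-1}) \, \sigma \cdot f$ (right translation), or more simply use that $f$, being supported on $S_{n-1}$, is of the form $f = \mathbf{1}_{S_{n-1}} \cdot h$ where $h = f|_{S_{n-1}}$; then decompose both $\mathbf{1}_{S_{n-1}}$ and $h$ spectrally.

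**The main obstacle.** The heart of the argument is a statement about induction/restriction: if $h$ (a function on $S_{n-1}$) has some component $V^\mu$ with $\mu_1 = n-d-1$, then the function on $S_n$ obtained by extending $h$ by zero off $S_{n-1}$ has a component $V^\lambda$ with $\lambda_1 \geq n-d$ lying \emph{only} in partitions reachable by adding a box to $\mu$ — but adding a box to $\mu$ gives $\lambda$ with $\lambda_1 \in \{\mu_1, \mu_1 + 1\} = \{n-d-1, n-d\}$. So extension-by-zero of a degree-$\leq d'$ function on $S_{n-1}$ could in principle produce a degree-$(d'+1)$ function on $S_n$, and conversely — which is the wrong direction. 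The correct way to see the lemma is therefore the \emph{contrapositive via Frobenius reciprocity}: $\langle f^{=\lambda}, f \rangle$ controls which $\mu$ can appear in $h = f|_{S_{n-1}}$, and one shows that if $f^{=\lambda} = 0$ for all $\lambda$ with $\lambda_1 < n - d$ (which is the statement $\deg f \leq d$), then by the branching rule the components $V^\mu$ of $h$ with $\mu_1 < n - d$ can only be "fed" by such vanishing $V^\lambda$'s together with possibly $V^{\lambda}$ with $\lambda_1 = n - d$ — and here the hypothesis $\mathcal{F} \subseteq \coset{n}{n}$ must be used to kill precisely the contribution of the $\lambda_1 = n-d$ components to $\mu$ with $\mu_1 = n-d-1$. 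I expect the cleanest route is: write $f = \mathbf{1}_{\coset{n}{n}} \cdot 1$ is false for nontrivial $\mathcal{F}$, so instead use the identity $\langle f|_{S_{n-1}}, p \rangle_{S_{n-1}} = \langle f, \tilde{p} \rangle_{S_n}$ for $p$ a function on $S_{n-1}$ and $\tilde p$ its extension by zero, combined with $\widetilde{V^\mu} \subseteq \bigoplus_{\lambda : \mu \subset \lambda} V^\lambda$, to deduce that any $\mu$-component of $f|_{S_{n-1}}$ with $\mu_1 = n - d - 1$ forces a nonzero pairing of $f$ with something in $\bigoplus_{\lambda : \lambda_1 \in \{n-d-1,\,n-d\}} V^\lambda$; the $\lambda_1 = n-d-1$ part vanishes by $\deg f \leq d$, and the $\lambda_1 = n-d$ part must be shown to have zero pairing with $\widetilde{V^\mu}$ for this particular $\mu$ — this last orthogonality is where the containment $\mathcal{F} \subseteq \coset{n}{n}$ enters, presumably because $\mathbf 1_{\coset{n}{n}}$ itself has trivial component in the relevant piece, or because the "top" box being added is forced. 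Getting this final orthogonality right — identifying exactly why a box must be added in the first row rather than creating a new shorter first row — is the step I would treat most carefully; everything else is bookkeeping with the branching rule and Frobenius reciprocity.
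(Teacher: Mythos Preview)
Your setup is correct and you have accurately located the crux: extending $p \in V^\mu$ (with $\mu_1 = n-d-1$) by zero lands partly in $V^\lambda$ for $\lambda$ obtained by adding a box to the first row of $\mu$, and that $\lambda$ has $\lambda_1 = n-d$, compatible with $\deg f \le d$. Your proposed fix is to show $\langle f^{=\lambda}, (\tilde p)^{=\lambda}\rangle = 0$ for this $\lambda$, invoking the support hypothesis once more. This is where the argument breaks down.

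The support hypothesis $\mathcal F \subseteq \coset{n}{n}$ has already been fully spent in the identity $\langle h, p\rangle_{S_{n-1}} = \langle f, \tilde p\rangle_{S_n}$: that identity \emph{is} the statement $f = \tilde h$, and there is nothing further to extract from it. The pairing you hope to kill does not vanish in general. Take $f = \tilde p$ itself (the paper's proof goes through verbatim for any real-valued $f$ supported on $\coset{n}{n}$, not just Boolean $f$); then $f^{=\lambda} = (\tilde p)^{=\lambda}$, and since $\mu \nearrow \lambda$ the branching rule together with Frobenius reciprocity forces this component to be nonzero, so $\langle f^{=\lambda}, (\tilde p)^{=\lambda}\rangle = \|(\tilde p)^{=\lambda}\|^2 > 0$. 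Abstract branching alone cannot finish the argument, because it gives you no control over \emph{which} $\lambda \supset \mu$ carries the nonzero pairing; and on the other side, the $V^{\mu'}$-component of $f = \tilde h$ (for $\mu'$ with $\mu'_1 = \mu_1$) receives contributions from several $h^{=\nu}$ with $\nu \nearrow \mu'$, which could in principle cancel.

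The paper resolves this not by an orthogonality argument but by a direct construction, using the explicit spanning set $\{\chi_{s,t}\}$ of each isotypic component. One picks tableaux $s,t$ of shape $\mu$ with $\langle h, \chi_{s,t}\rangle \neq 0$, forms $s',t'$ of shape $\mu' := \mu \cup (1)$ by appending $n$ as a new one-box row, and computes $\langle f, \chi_{s',t'}\rangle$ term by term. The support hypothesis kills every summand with $\pi'(n) \neq n$, and the surviving terms collapse exactly to $\langle h, \chi_{s,t}\rangle \neq 0$. The point is that you get to \emph{choose} the target shape $\mu'$ (adding the box as a new row, so $\mu'_1 = \mu_1 = n-(d+1)$), rather than letting abstract Frobenius reciprocity hand you an uncontrolled mixture that includes the bad first-row $\lambda$. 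This explicit tableau calculation is the missing ingredient in your plan.
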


The corresponding result for functions on the Boolean cube $\{0,1\}^n$ is easy to prove. Suppose that $f\colon \{0,1\}^n \to \{0,1\}$ is such that $f(x) = 1$ implies $x_n = 1$. We can write
\[
 f(x_1,\ldots,x_n) = (1-x_n) f_0(x_1,\ldots,x_{n-1}) + x_n f_1(x_1,\ldots,x_{n-1}).
\]
Substituting $x_n = 0$, we get $f_0 = 0$, and so $f = x_n f_1$. The restriction of $f$ to the coset $\{x : x_n = 1\}$ is $f_1$. If $f_1 \neq 0$ then clearly $\deg f = \deg f_1 + 1$, where $\deg f$ is the degree of the unique multilinear polynomial representing $f$.

The argument proving \Cref{lem:degree-reduction-sym} is more subtle. The starting point is the following explicit description of the subspaces $V^\lambda$, which we mentioned in \Cref{sec:degree-sym}.

Let $\lambda = \lambda_1,\ldots,\lambda_m \vdash n$. A \emph{Young tableau} of \emph{shape} $\lambda$ is an arrangement of the numbers $1,\ldots,n$ in $m$ rows of lengths $\lambda_1,\ldots,\lambda_m$, left justified. Here are some examples:
\[
 \begin{array}{ccc}
 \ytableaushort{341,2} && \ytableaushort{14,23} \\\\
 \lambda = (3,1) && \lambda = (2,2)
 \end{array}
\]

Let $s,t$ be a pair of Young tableaux of the same shape $\lambda_1,\ldots,\lambda_m \vdash n$. The function $e_{s,t}\colon S_n \to \{0,1\}$ is defined as follows: $e_{s,t}(\alpha) = 1$ if for every $k \in [m]$, $\alpha$ sends every number on the $k$-th row of $s$ to a number on the $k$-th row of $t$, and $e_{s,t}(\alpha) = 0$ otherwise. For example, consider
\[
 s = \ytableaushort{123,45} \quad
 t = \ytableaushort{315,24}
\]
Then $e_{s,t}(\alpha) = 1$ if $\alpha(1),\alpha(2),\alpha(3) \in \{3,1,5\}$ and $\alpha(4),\alpha(5) \in \{2,4\}$.

For a tableau $t$ with $m'$ columns, we denote by $C(t)$ the \emph{column stabilizer} of $t$, which is the set of all permutations of the entries of $t$ which only move entries within the same column. Each permutation $\pi \in C(t)$ thus consists of $m'$ permutations $\pi_1,\ldots,\pi_{m'}$, one for each column. The \emph{sign} of $\pi$, denote $(-1)^\pi$, is the product of the signs of the permutations $\pi_1,\ldots,\pi_{m'}$. We denote the result of applying $\pi$ to $t$ by $t^\pi$.

Let $s,t$ be a pair of Young tableaux of the same shape $\lambda \vdash n$. We define a function $\chi_{s,t}\colon S_n \to \{-1,0,1\}$ as follows:
\[
 \chi_{s,t} = \sum_{\pi \in C(t)} (-1)^\pi e_{s,t^\pi}.
\]
Continuing our above example, here is $t^\pi$ for all elements $\pi \in C(t)$, together with their sign:
\[
 \left(\ytableaushort{315,24}~,+1\right) \quad
 \left(\ytableaushort{345,21}~,-1\right) \quad
 \left(\ytableaushort{215,34}~,-1\right) \quad
 \left(\ytableaushort{245,31}~,+1\right)
\]

We are interested in the functions $\chi_{s,t}$ since they span $V^\lambda$.

\begin{theorem}[{\cite[Chapter 2]{Sagan}}] \label{thm:Vlambda-sym}
Let $\lambda \vdash n$. The subspace $V^\lambda$ is spanned by the functions $\chi_{s,t}$, where $s,t$ go over all pairs of Young tableaux of shape $\lambda$.
\end{theorem}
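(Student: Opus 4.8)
The plan is to recognize the functions $e_{s,t}$ and $\chi_{s,t}$ as \emph{matrix coefficients} of standard $S_n$-modules and then invoke the classical fact that the matrix coefficients of an absolutely irreducible representation span exactly its isotypic component. Let $M^\lambda$ be the permutation module spanned by the \emph{tabloids} of shape $\lambda$ (ordered set partitions of $[n]$ into blocks of sizes $\lambda_1,\ldots,\lambda_m$), equipped with the inner product making the tabloids orthonormal and with the left $S_n$-action permuting entries; for a Young tableau $u$ of shape $\lambda$ let $\{u\}$ denote its row tabloid. Unwinding the definition of $e_{s,t}$, we have $e_{s,t}(\alpha)=1$ precisely when $\alpha$ maps the $k$-th row of $s$ onto the $k$-th row of $t$ for every $k$, i.e.\ precisely when $\alpha\{s\}=\{t\}$; hence $e_{s,t}(\alpha)=\langle \alpha\{s\},\{t\}\rangle$. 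By bilinearity of the pairing,
\[
 \chi_{s,t}(\alpha)=\sum_{\pi\in C(t)}(-1)^\pi\langle\alpha\{s\},\{t^\pi\}\rangle=\Bigl\langle\alpha\{s\},\ \sum_{\pi\in C(t)}(-1)^\pi\{t^\pi\}\Bigr\rangle=\langle\alpha\{s\},e_t\rangle,
\]
where $e_t:=\sum_{\pi\in C(t)}(-1)^\pi\{t^\pi\}\in M^\lambda$ is the \emph{polytabloid} of $t$ (matching $(-1)^\pi=\mathrm{sgn}(\pi)$ and $\{t^\pi\}=\pi\{t\}$ to Sagan's conventions), and $S^\lambda\subseteq M^\lambda$ is by definition the span of all polytabloids.

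First I would reduce the span of the $\chi_{s,t}$ to matrix coefficients lying inside $S^\lambda$. As $s$ ranges over all tableaux of shape $\lambda$ the tabloid $\{s\}$ runs over a basis of $M^\lambda$, and as $t$ ranges the polytabloid $e_t$ runs over a spanning set of $S^\lambda$; so by bilinearity $\operatorname{span}\{\chi_{s,t}:s,t\}=\operatorname{span}\{\alpha\mapsto\langle\alpha x,y\rangle:x\in M^\lambda,\ y\in S^\lambda\}$. Since the permutation action on $M^\lambda$ is orthogonal, the orthogonal complement $(S^\lambda)^\perp$ is again an $S_n$-submodule; thus if $x'$ is the orthogonal projection of $x$ onto $S^\lambda$ and $y\in S^\lambda$, then $\langle\alpha x,y\rangle=\langle\alpha x',y\rangle+\langle\alpha(x-x'),y\rangle=\langle\alpha x',y\rangle$, the last term vanishing because $\alpha(x-x')\in(S^\lambda)^\perp$ is orthogonal to $y$. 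As $x$ ranges over $M^\lambda$ the projection $x'$ ranges over all of $S^\lambda$, so $\operatorname{span}\{\chi_{s,t}\}$ equals the space $\mathrm{MC}(S^\lambda)$ of matrix coefficients $\alpha\mapsto\langle\alpha v,w\rangle$ with $v,w\in S^\lambda$.

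It remains to identify $\mathrm{MC}(S^\lambda)$ with $V^\lambda$. For the inclusion $\mathrm{MC}(S^\lambda)\subseteq V^\lambda$: for fixed $v\neq 0$ the map $w\mapsto(\alpha\mapsto\langle\alpha v,w\rangle)$ is $S_n$-equivariant from $S^\lambda$ into the left-regular module $\mathbb R[S_n]$ and nonzero, hence injective, so its image is a submodule isomorphic to $S^\lambda$ and therefore lies in $V^\lambda$. For the reverse inclusion I would count dimensions: the left-regular decomposition $\mathbb R[S_n]\cong\bigoplus_\lambda(S^\lambda)^{\oplus\dim S^\lambda}$ gives $\dim V^\lambda=(\dim S^\lambda)^2$, while the linear map $S^\lambda\otimes(S^\lambda)^*\to\mathbb R[S_n]$ sending $v\otimes\varphi$ to $\alpha\mapsto\varphi(\alpha v)$ has image $\mathrm{MC}(S^\lambda)$ and is injective because $S^\lambda$ is absolutely irreducible (so the density theorem makes $\mathbb R[S_n]\to\mathrm{End}(S^\lambda)$ surjective). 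Hence $\dim\mathrm{MC}(S^\lambda)=(\dim S^\lambda)^2=\dim V^\lambda$, which forces $\mathrm{MC}(S^\lambda)=V^\lambda$ and thus $\operatorname{span}\{\chi_{s,t}\}=V^\lambda$.

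The only substantive inputs are the classical structure facts about Specht modules (polytabloids span $S^\lambda$; $S^\lambda$ is absolutely irreducible; $\sum_\lambda(\dim S^\lambda)^2=n!$), so I do not expect a genuine obstacle here: the main care is in the bookkeeping, namely matching the paper's combinatorial definitions of $e_{s,t}$, $t^\pi$, $C(t)$, and $(-1)^\pi$ to the tabloid/polytabloid/matrix-coefficient language and verifying the two dimension counts line up.
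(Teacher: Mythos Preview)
The paper does not give its own proof of this theorem: it is stated with a citation to Sagan and used as a black box. Your argument is correct and is essentially the standard derivation one would extract from that reference---identify $e_{s,t}$ with matrix coefficients of the permutation module $M^\lambda$, pass to the Specht module $S^\lambda$ via polytabloids and orthogonal projection, and then use absolute irreducibility of $S^\lambda$ to match dimensions with the isotypic component. One small remark: your injectivity step via the density theorem is fine, but invoking Schur orthogonality for the matrix coefficients of $S^\lambda$ would give the linear independence (and hence the dimension $(\dim S^\lambda)^2$) more directly.
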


We can now prove \Cref{lem:degree-reduction-sym}.

\begin{proof}[Proof of \Cref{lem:degree-reduction-sym}]
Assume, for concreteness, that $i = j = n$.
If $n = 1$ then $\deg f|_{\coset{n}{n}} = 0$, so there is nothing to prove. If $f|_{\coset{n}{n}} = 0$ then again $\deg f|_{\coset{n}{n}} = 0$, so there is nothing to prove. Therefore we can assume that $n \geq 2$ and that $f|_{\coset{n}{n}} \neq 0$.

Let $d = \deg f|_{\coset{n}{n}}$. According to the spectral definition of degree (see \Cref{sec:degree-sym}), $f|_{\coset{n}{n}}^{=\lambda} \neq 0$ for some $\lambda \vdash n-1$ such that $\lambda_1 = (n-1) - d$. Since the decomposition into isotypic components is orthogonal, this implies that $\langle f|_{\coset{n}{n}}, f|_{\coset{n}{n}}^{=\lambda} \rangle = \langle f|_{\coset{n}{n}}^{=\lambda}, f|_{\coset{n}{n}}^{=\lambda} \rangle \neq 0$. Since $f|_{\coset{n}{n}}^{=\lambda} \in V^\lambda$, \Cref{thm:Vlambda-sym} implies that $\langle f|_{\coset{n}{n}}, \chi_{s,t} \rangle \neq 0$ for some Young tableaux $s,t$ of shape $\lambda$.

Let $s',t'$ be the Young tableaux obtained by adding a new row to $s,t$ (respectively) consisting only of the number $n$. Here is an example:
\[
 s = \ytableaushort{123,45} \quad
 t = \ytableaushort{315,24} \quad \longrightarrow \quad
 s' = \ytableaushort{123,45,6} \quad
 t' = \ytableaushort{315,24,6}
\]
The tableaux $s',t'$ have shape $\mu$ satisfying $\mu_1 = \lambda_1 = (n-1)-d = n-(d+1)$.
We will show that $\langle f, \chi_{s',t'} \rangle \neq 0$. The orthogonality of the decomposition into isotypic components would imply that $f^{=\mu} \neq 0$, and so $\deg f \geq d+1$ by the spectral definition of degree.

Let $\pi' \in C(t')$ be any permutation such that $\pi'(n) \neq n$. By assumption, $f(\alpha') = 1$ only if $\alpha'(n) = n$, in which case $e_{s',(t')^{\pi'}}(\alpha') = 0$. Therefore $\langle f, e_{s',(t')^{\pi'}} \rangle = 0$. This shows that
\[
 \langle f, \chi_{s',t'} \rangle = \sum_{\substack{\pi' \in C(t') \\ \pi'(n) = n}} (-1)^{\pi'} \langle f, e_{s',(t')^{\pi'}} \rangle.
\]
If $\pi'(n) = n$ then $e_{s',(t')^{\pi'}}(\alpha') \neq 0$ only if $\alpha'(n) = n$. Conversely, if $\alpha'(n) = n$, that is, if $\alpha' \in \coset{n}{n}$, then $e_{s',(t')^{\pi'}}(\alpha') = e_{s,t^\pi}(\alpha)$, where $\alpha,\pi$ are the restrictions of $\alpha',\pi'$ to $[n-1]$. Moreover, $(-1)^\pi = (-1)^{\pi'}$. Therefore
\[
 \langle f, \chi_{s',t'} \rangle = \sum_{\pi \in C(t)} (-1)^\pi \langle f|_{\coset{n}{n}}, e_{s,t^\pi} \rangle = \langle f|_{\coset{n}{n}}, \chi_{s,t} \rangle \neq 0. \qedhere
\]
\end{proof}

\subsection{Perfect matching scheme} \label{sec:prel-pms}

For integer $n \ge 1$, the \emph{perfect matching scheme} $\cM_{2n}$ is the collection of all perfect matchings of the complete graph $K_{2n}$. Since the symmetric group $S_n$ can be viewed as the collection of all perfect matchings of the complete \emph{bipartite} graph $K_{n,n}$, we can think of the perfect matching scheme as the non-bipartite analog of the symmetric group.

The perfect matching scheme $\cM_{2n}$ contains $(2n-1)!!$ many different perfect matchings. Here
\[
 (2n-1)!! = (2n-1)(2n-3)(2n-5)\cdots(5)(3)(1) = \frac{(2n)!}{2^nn!}.
\]

We sometimes think of a perfect matching in $\cM_{2n}$ as a set of $n$ pairs of elements from $[2n]$, which together cover all of $[2n]$ (the \emph{pair representation}). At other times, it will be useful to think of a perfect matching in $\cM_{2n}$ as a fixed-point-free involution on $[2n]$, that is, $m(i) \neq i$ is the element that $i$ is matched to, and $m(m(i)) = i$.

A \emph{$t$-coset} of $\cM_{2n}$ is a set of the form
\[
 \{m \in \cM_{2n} : m(i_1) = j_1, \ldots, m(i_t) = j_t\},
\]
where $i_1,\ldots,i_t,j_1,\ldots,j_t \in [2n]$ are distinct.\footnote{This is a slight abuse of terminology as $\cM_{2n}$ doesn't afford a natural group structure.} 
A $t$-coset contains $(2(n-t)-1)!!$ perfect matchings.

We use the term \emph{coset} for a $1$-coset. For $i \neq j \in [2n]$, the coset $\dcoset{i}{j}$ consists of all perfect matchings in which $i,j$ are matched:
\[
 \dcoset{i}{j} = \{m \in \cM_{2n} : m(i) = j\}.
\]
Note that $\dcoset{i}{j} = \dcoset{j}{i}$.

Two perfect matchings $m_1,m_2 \in \cM_{2n}$ are \emph{$t$-intersecting} if there is a $t$-coset which contains both of them. A subset $\mathcal{F} \subseteq \cM_{2n}$ is \emph{$t$-intersecting} if any two perfect matchings in $\mathcal{F}$ are $t$-intersecting.

\subsubsection{Degree} \label{sec:degree-pms}

We can represent perfect matchings $m \in \cM_{2n}$ using $\binom{2n}{2}$ variables $x_{ij}$ (where the index is an unordered pair of elements) whose semantics are: $x_{ij} = 1$ if $m(i) = j$, and $x_{ij} = 0$ otherwise. The \emph{degree} of a function $f\colon \cM_{2n} \to \mathbb{R}$, denoted $\deg f$, is the minimal $d$ such that $f$ can be represented as a degree~$d$ polynomial over the variables $x_{ij}$. For exampole, the characteristic function of a $t$-coset has degree at most $t$.

An equivalent way to define degree is via the representation theory of the perfect matching scheme. Let $\mathbb{R}[\cM_{2n}]$ be the vector space of all real-valued functions over the perfect matching scheme. Representation theory (for example, \cite[Chapter 11]{CSST08}) gives an orthogonal decomposition (with respect to the inner product $\langle f, g \rangle = \sum_{m \in \cM_{2n}} f(m) g(m)$)
\[
 \mathbb{R}[\cM_{2n}] = \bigoplus_{\lambda \vdash n} V^{2\lambda},
\]
where $\lambda$ goes over all integer partitions of $n$, $2\lambda$ is the partition of $2n$ obtained by doubling each part, and $V^{2\lambda}$ are the isotypic components, which we define explicitly in \Cref{sec:degree-reduction-pms}. Accordingly, every function $f\colon \cM_{2n} \to \mathbb{R}$ has a unique decomposition
\[
 f = \sum_{\lambda \vdash n} f^{=\lambda},
\]
where $f^{=\lambda} \in V^{2\lambda}$. Lindzey~\cite[Theorem 5.1.1]{LindzeyThesis} showed that $\deg f$ is the maximal $d$ such that $f^{=\lambda} \neq 0$ for some $\lambda$ satisfying $\lambda_1 = n - d$. In particular, every function has degree at most $n - 1$.

We can now formally state the main result of Fallat, Meagher and Shirazi~\cite[Theorem 4.13]{FMS}.

\begin{theorem} \label{thm:FMS}
If $n \geq 3$ then any $2$-intersecting family $\mathcal{F} \subseteq \cM_{2n}$ contains at most $(2n-5)!!$ perfect matchings. Furthermore, if $f\colon \cM_{2n} \to \{0,1\}$ is the characteristic vector of a $2$-intersecting family of size $(2n-5)!!$, then $\deg f \leq 2$.
\end{theorem}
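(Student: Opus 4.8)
The natural route is the \emph{ratio (Hoffman) bound} for cocliques in the commutative association scheme carried by $\cM_{2n}$, followed by an analysis of its tightness to extract the degree statement; this mirrors the proof of the symmetric-group analogue \Cref{thm:MR21}. Recall that $\cM_{2n}$ is the homogeneous space $S_{2n}/H_n$ for $H_n = S_2 \wr S_n$ the hyperoctahedral group, and that $(S_{2n}, H_n)$ is a Gelfand pair; hence the commutant of the $S_{2n}$-action on $\mathbb{R}[\cM_{2n}]$ is commutative, its minimal idempotents are the projections onto the isotypic components $V^{2\lambda}$ ($\lambda \vdash n$), and the orbitals are indexed by partitions $\mu \vdash n$ recording, up to halving, the cycle type of the union $m_1 \cup m_2$. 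In this language, $m_1$ and $m_2$ share exactly $k$ common edges precisely when $\mu$ has exactly $k$ parts equal to $1$.

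Consider the graph $\Gamma$ on vertex set $\cM_{2n}$ with $m_1 \sim m_2$ iff $m_1 \neq m_2$ and $|m_1 \cap m_2| \leq 1$. A family $\mathcal{F} \subseteq \cM_{2n}$ is $2$-intersecting exactly when it is a coclique of $\Gamma$, so the first assertion is the statement $\alpha(\Gamma) \leq (2n-5)!!$. The adjacency matrix of $\Gamma$ is the sum of the orbital matrices over those $\mu$ having at most one part equal to $1$; in particular it lies in the Bose--Mesner algebra, hence acts on each $V^{2\lambda}$ as a scalar $\eta_\lambda$. Writing $d = \eta_{(n)}$ for the valency of $\Gamma$ (which one evaluates as $d = D_n + n D_{n-1}$, where $D_m$ is the number of perfect matchings of $K_{2m}$ sharing no edge with a fixed one), the ratio bound reads $\alpha(\Gamma) \leq (2n-1)!! \cdot \frac{-\tau}{d-\tau}$ with $\tau = \min_\lambda \eta_\lambda$. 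The plan is to compute the $\eta_\lambda$ through the zonal spherical functions of $(S_{2n}, H_n)$ and to verify that the least eigenvalue satisfies
\[
 \frac{-\tau}{d - \tau} = \frac{1}{(2n-1)(2n-3)};
\]
since $(2n-1)!! / \bigl((2n-1)(2n-3)\bigr) = (2n-5)!!$, this yields the bound, and it is attained by the $2$-cosets $\{m \in \cM_{2n} : m(i_1) = j_1,\ m(i_2) = j_2\}$, whose characteristic functions have degree at most $2$.

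For the degree statement I would invoke tightness of the ratio bound: if a coclique $S$ of $\Gamma$ has $|S| = (2n-5)!!$, then $\mathbf{1}_S - \frac{|S|}{(2n-1)!!}\,\mathbf{1}$ lies in $\bigoplus_{\lambda \,:\, \eta_\lambda = \tau} V^{2\lambda}$. Hence, for $f = \mathbf{1}_{\mathcal{F}}$, one has $f^{=\lambda} = 0$ unless $\lambda = (n)$ or $\eta_\lambda = \tau$. It therefore suffices to show that every $\lambda$ with $\eta_\lambda = \tau$ satisfies $\lambda_1 \geq n-2$: then $f^{=\lambda} \neq 0$ only for $\lambda \in \{(n), (n-1,1), (n-2,2), (n-2,1,1)\}$, and so $\deg f \leq 2$ by the spectral description of degree. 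Concretely, the task reduces to the strict inequality $\eta_\lambda > \tau$ for every $\lambda$ with $\lambda_1 \leq n-3$.

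The main obstacle is the eigenvalue analysis, which has two parts. First, one needs tractable expressions for the $\eta_\lambda$; the spherical functions of $(S_{2n}, H_n)$ are more delicate than symmetric-group characters, but explicit evaluations are available (via symmetric-function methods, e.g.\ Jack polynomials at parameter $2$, or the recursions in~\cite{LindzeyThesis}), at least for the partitions near $(n)$ that govern $\tau$. Second, and harder, one must establish the \emph{uniform} strict inequality $\eta_\lambda > \tau$ for all partitions with small $\lambda_1$ and all $n$ in range; I expect this to require a monotonicity or interlacing argument, or a clean lower bound on $\eta_\lambda$ in terms of its value at a distinguished partition. Finally, the smallest cases (certainly $n = 3$, and possibly a few more) should be settled by direct computation of the spectrum or by an exhaustive maximum-clique search, in the spirit of the $n \leq 7$ analysis carried out for $S_n$ elsewhere in this paper.
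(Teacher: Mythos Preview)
The paper does not prove this theorem. It is quoted verbatim as \cite[Theorem 4.13]{FMS} (Fallat, Meagher and Shirazi) and used as a black box, exactly as \Cref{thm:MR21} is quoted from Meagher and Razafimahatratra for the symmetric group. So there is no ``paper's own proof'' to compare against.

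That said, your plan is the right one and is essentially what \cite{FMS} carries out: set up the $2$-derangement graph $\Gamma$ on $\cM_{2n}$, place its adjacency matrix in the Bose--Mesner algebra of the Gelfand pair $(S_{2n}, S_2 \wr S_n)$, apply the ratio bound, and then use tightness to force $f - \tfrac{|S|}{(2n-1)!!}\mathbf{1}$ into the $\tau$-eigenspace. Your identification of the hard step is also accurate: the crux is showing that the minimum eigenvalue $\tau$ is attained only on isotypic components $V^{2\lambda}$ with $\lambda_1 \geq n-2$, uniformly in $n$. In \cite{FMS} this is done by explicit computation of the relevant zonal spherical function values together with bounding arguments for the remaining partitions; your proposal leaves this as a declared gap, so as written it is an outline rather than a proof. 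If you want to complete it, the cleanest route is to follow the recursions for the spherical functions in \cite{LindzeyThesis} (or the Jack-polynomial expressions you mention) and mimic the bounding strategy of \cite{MR21}, then dispatch small $n$ by direct computation as you suggest.
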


We will also need a result of Dafni et al.~\cite{DFLLV} classifying degree~$1$ functions.

\begin{theorem}[{\cite[Theorem 6.2]{DFLLV}}] \label{thm:degree1-classification-pms}
If $f\colon \cM_{2n} \to \{0,1\}$ has degree at most $1$ then $f$ is either a \emph{dictator}:
\[
 f = \sum_{j \in J} x_{ij},
\]
for some $i \in [2n]$ and $J \subseteq [2n]$ not containing $i$; or $f$ is a \emph{triangle}:
\[
 f = x_{ij} + x_{ik} + x_{jk},
\]
for some distinct $i,j,k \in [2n]$; or $f$ is an \emph{anti-triangle}:
\[
 f = 1 - x_{ij} - x_{ik} - x_{jk},
\]
for some distinct $i,j,k \in [2n]$.
\end{theorem}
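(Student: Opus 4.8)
I would argue by viewing $f$ as an affine functional on the perfect matching polytope. Since $\deg f \le 1$, we may write $f = c_0 + \sum_e c_e x_e$, the sum over the $\binom{2n}{2}$ edges of $K_{2n}$; if $f$ is constant it is a dictator ($f = \sum_{j \in \emptyset} x_{ij}$ or $f = \sum_{j \neq i} x_{ij}$), so assume $f$ is non-constant. Let $P := \operatorname{conv}\{\mathbf{1}_m : m \in \cM_{2n}\} \subseteq \mathbb{R}^{\binom{2n}{2}}$ be the perfect matching polytope, whose vertices are exactly the perfect matchings. Since $f$ is $\{0,1\}$-valued on all vertices, $0 \le f \le 1$ on all of $P$, with both bounds attained; write $F_0 = \{f = 0\}$ and $F_1 = \{f = 1\}$ for the two nonempty faces. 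The plan is to use Edmonds' description of $P$ by the degree equations $x(\delta(v)) = 1$, the nonnegativity inequalities $x_e \ge 0$, and the blossom inequalities $x(\delta(S)) \ge 1$ for odd sets $S$ with $3 \le |S| \le n$. The observation that makes everything go --- and that explains why triangles appear in the statement --- is that the blossom inequality for a triple $\{i,j,k\}$ is precisely $x_{ij}+x_{ik}+x_{jk} \le 1$ (a facet of $P$), because $x(\delta(\{i,j,k\})) = 3 - 2(x_{ij}+x_{ik}+x_{jk})$ on $P$.

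Next I would apply linear programming duality to the tight valid inequalities $-f \le 0$ and $f-1 \le 0$: since the degree equations hold with equality on $P$, this writes, as functions on $P$, $f = \sum_e \nu_e x_e + \sum_S \mu_S (x(\delta(S))-1)$ and $1-f = \sum_e \nu'_e x_e + \sum_S \mu'_S (x(\delta(S))-1)$ with all coefficients nonnegative, where complementary slackness forces every edge or blossom with a positive coefficient in the first expression to vanish, resp.\ be tight, on $F_0$ (and similarly for the second expression on $F_1$). Adding the two identities, the constant function $1$ is written on $P$ as $1 = \sum_e \tilde\nu_e x_e + \sum_S \tilde\mu_S (x(\delta(S))-1)$ with $\tilde\nu,\tilde\mu \ge 0$. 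The heart of the matter is to classify which edge sets $\mathcal{E} = \{e : \tilde\nu_e > 0\}$ and odd-set families $\mathcal{S} = \{S : \tilde\mu_S > 0\}$ can support such an identity on $P$, and I expect there to be exactly two possibilities: either $\mathcal{S} = \emptyset$ and $\sum_e \tilde\nu_e x_e \equiv 1$ --- using that $\sum_e w_e x_e$ is constant on $\cM_{2n}$ iff $w_e = \phi_u + \phi_v$ for $e = \{u,v\}$ and some $\phi\colon [2n] \to \mathbb{R}$, together with the Boolean-ness of $f$, this forces $\mathcal{E}$ to be a single star $\delta(v)$, and then $f = \sum_{j \in J} x_{vj}$ is a dictator --- or $\mathcal{S}$ is a single triple $\{\{i,j,k\}\}$ and $\mathcal{E}$ its three internal edges, realising $1 = (x_{ij}+x_{ik}+x_{jk}) + \tfrac12(x(\delta(\{i,j,k\}))-1)$, in which case the way the identity splits between the $f$-part and the $(1-f)$-part forces $f = x_{ij}+x_{ik}+x_{jk}$ or $f = 1 - x_{ij}-x_{ik}-x_{jk}$, a triangle or anti-triangle.

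The main obstacle is precisely this classification: ruling out any contribution from blossoms $x(\delta(S))-1$ with $|S| \ge 5$, and ruling out genuinely mixed combinations involving several distinct facets. To attack it I would evaluate the identity $1 = \sum_e \tilde\nu_e x_e + \sum_S \tilde\mu_S (x(\delta(S))-1)$ at cleverly chosen matchings: every summand is nonnegative on every matching, so a matching $m$ that simultaneously makes each blossom $S \in \mathcal{S}$ tight (that is, $|m \cap \delta(S)| = 1$) while avoiding every edge of $\mathcal{E}$ would yield $0 = 1$. A short combinatorial argument should show that such an $m$ always exists unless $\mathcal{E}$ essentially covers a full star (the only obstruction to avoiding an edge set being that it contains all edges at some vertex) or $\mathcal{S}$ collapses, after replacing each odd set by its smaller side, to a single triple; a more careful accounting of the coefficients --- using that $|m \cap \delta(S)| - 1$ takes only even values, and that $f$ itself (not merely the linear functional) must be $\{0,1\}$-valued --- then kills the remaining cases and yields the stated normal forms. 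The small cases (in particular $n \le 2$, where $\cM_4$ has only three matchings and every Boolean function on it turns out to be a dictator) are handled directly.
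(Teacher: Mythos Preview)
The paper does not prove this theorem; it is quoted from \cite[Theorem~6.2]{DFLLV} and used as a black box, so there is no in-paper proof to compare against. Your polyhedral approach via Edmonds' description of the perfect matching polytope is natural and is the obvious analogue of the Birkhoff--von~Neumann argument the paper mentions for \Cref{thm:degree1-classification-sym}; it is plausibly close in spirit to what \cite{DFLLV} actually do.

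That said, what you have written is a plan, not a proof, and the gap sits exactly where you place it. The claim that the nonnegative identity $1 = \sum_e \tilde\nu_e x_e + \sum_S \tilde\mu_S\bigl(x(\delta(S))-1\bigr)$ on $P$ admits ``exactly two possibilities'' is false as stated: any convex combination of stars, for instance $1 = \tfrac12 x(\delta(u)) + \tfrac12 x(\delta(v))$, is also such an identity, and so are many mixtures involving larger blossoms. The real work is to show that the \emph{splitting} of such an identity into a $(\nu,\mu)$-part certifying $f\ge 0$ and a $(\nu',\mu')$-part certifying $1-f\ge 0$, together with complementary slackness on $F_0,F_1$ and the requirement that $f$ be $\{0,1\}$-valued on \emph{every} vertex, forces one of the three normal forms. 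Your sketch defers this entirely to ``a short combinatorial argument should show\ldots'' and ``a more careful accounting\ldots''; in particular, eliminating contributions from odd sets with $|S|\ge 5$ and from genuinely mixed supports is the substance of the theorem, not a routine verification. Until that classification is actually carried out --- and it is not obviously short --- the proposal does not constitute a proof.
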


\subsubsection{Certificate complexity} \label{sec:certificate-pms}

Fix $n$. A \emph{certificate} is a set $C = \bigl\{ \{i_1,j_1\}, \ldots, \{i_r,j_r\} \bigr\} \subseteq \binom{[2n]}{2}$, where $\binom{[2n]}{2}$ is the collection of all subsets of $[2n]$ of size~$2$. A perfect matching $m \in \cM_{2n}$ \emph{satisfies} the certificate $C$ if $m(i_1) = j_1,\ldots,m(i_r) = j_r$.

A \emph{Boolean function} is a function $f\colon \cM_{2n} \to \{0,1\}$. Just as in \Cref{sec:certificate-sym}, for each $m \in \cM_{2n}$ we can define the following notions: a certificate for $m$ (with respect to $f$); the certificate complexity of $m$, denoted $C(f,m)$; a minimum certificate; and the certificate complexity of $f$, denoted $C(f)$. As in the case of the symmetric group, $C(f) \leq n-1$, since a perfect matching is determined by any $n-1$ of its edges. Dafni et al.~\cite{DFLLV} showed that $C(f)$ is polynomially related to $\deg f$.

In the case of the symmetric group, degree~$1$ functions have certificate complexity~$1$. This is no longer the case for the perfect matching scheme, since triangles and anti-triangles have certificate complexity~$2$. 

\begin{lemma} \label{lem:degree1-cert-pms}
If $f\colon \cM_{2n} \to \{0,1\}$ has degree at most~$1$ then $C(f) \leq 2$.
\end{lemma}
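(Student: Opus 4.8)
The plan is to invoke the classification of degree-$1$ Boolean functions on $\cM_{2n}$ (\Cref{thm:degree1-classification-pms}) and to exhibit, in each of the three cases, a certificate of size at most~$2$ for every perfect matching. Since the three families are dictators, triangles, and anti-triangles, it suffices to handle these one at a time.

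First I would dispose of the dictator case $f = \sum_{j \in J} x_{ij}$ exactly as in the proof of \Cref{lem:degree1-cert-sym}: for any $m \in \cM_{2n}$ precisely one variable $x_{ij'}$ with $j' \neq i$ is set to~$1$ by $m$, namely $j' = m(i)$, so $f(m) = 1$ iff $m(i) \in J$; hence the single edge $\{i, m(i)\}$ certifies the value of $f$ at $m$. Next, for a triangle $f = x_{ij} + x_{ik} + x_{jk}$, note that the edges $\{i,j\}$, $\{i,k\}$, $\{j,k\}$ pairwise share a vertex, so $m$ contains at most one of them and $f(m) = 1$ iff it contains exactly one. If $f(m) = 1$, the single triangle edge lying in $m$ is a certificate of size~$1$. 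If $f(m) = 0$, then $m(i) \notin \{j,k\}$ and $m(j) \neq k$, and I would take the certificate $\bigl\{\{i, m(i)\}, \{j, m(j)\}\bigr\}$: any matching satisfying it maps $i$ outside $\{j,k\}$ and $j$ away from $k$, hence contains none of the triangle edges and has $f$-value~$0$. This is a certificate of size~$2$.

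Finally, the anti-triangle case $f = 1 - x_{ij} - x_{ik} - x_{jk}$ reduces immediately to the triangle case: a certificate for a Boolean function $g$ at $m$ with value $b$ is a certificate for $1-g$ at $m$ with value $1-b$, so $C(f) = C(1-f) \leq 2$. Assembling the three cases gives $C(f) \leq 2$.

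I do not expect any genuine obstacle; the only point requiring a moment's care is the triangle case with $f(m)=0$, where one must check that the two chosen edges $\{i,m(i)\}$ and $\{j,m(j)\}$ actually form a consistent certificate, i.e.\ that they are vertex-disjoint. This holds because $m$ is a perfect matching with $m(i) \neq j$, so $i, j, m(i), m(j)$ are four distinct vertices. (A minor cosmetic remark worth including: unlike the symmetric group, degree~$1$ does not force $C(f) \le 1$ here, and the bound~$2$ is tight, witnessed by any triangle.)
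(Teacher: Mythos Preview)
Your proposal is correct and follows essentially the same argument as the paper: invoke the classification of degree-$1$ Boolean functions, and in each of the three cases (dictator, triangle, anti-triangle) exhibit the very same certificates the paper uses. Your additional verification that $\{i,m(i)\}$ and $\{j,m(j)\}$ are vertex-disjoint in the triangle $f(m)=0$ case is a nice touch that the paper leaves implicit.
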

\begin{proof}
According to \Cref{thm:degree1-classification-pms}, $f$ is either a dictator, a triangle, or an anti-triangle. If $f = \sum_{j \in J} x_{ij}$ is a dictator, then every $m$ has a certificate $\bigl\{ \{i,m(i)\} \bigr\}$. If $f = x_{ij} + x_{ik} + x_{jk}$ is a triangle, then every $m$ such that $f(m) = 1$ has one of the certificates $\bigl\{\{i,j\}\bigr\},\bigl\{\{i,k\}\bigr\},\bigl\{\{j,k\}\bigr\}$, and every $m$ such that $f(m) = 0$ has the certificate $\bigl\{\{i,m(i)\},\{j,m(j)\}\bigr\}$. If $f$ is an anti-triangle then $1-f$ is a triangle, so we can use the same certificates as in the case of a triangle.
\end{proof}

The proof of \Cref{lem:2-int-coset-sym} extends to the perfect matching scheme with minor changes.

\begin{lemma} \label{lem:2-int-coset-pms}
Let $f\colon \cM_{2n} \to \{0,1\}$ be the characteristic function of a $2$-intersecting family, and suppose that $C(f) \leq n-2$. If $f(m_1) = f(m_2) = 1$ and $C_{m_1},C_{m_2}$ are minimum certificates for $m_1,m_2$ (respectively), then $|C_{m_1} \cap C_{m_2}| \geq 2$.
\end{lemma}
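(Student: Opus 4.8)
The plan is to adapt the proof of \Cref{lem:2-int-coset-sym}. Throughout, identify each perfect matching with its pair representation, a set of $n$ edges, and let $\mathcal{F}$ denote the family with characteristic function $f$. It suffices to produce perfect matchings $m_1',m_2' \in \mathcal{F}$ such that $m_1'$ satisfies $C_{m_1}$, $m_2'$ satisfies $C_{m_2}$, and $m_1' \cap m_2' = C_{m_1} \cap C_{m_2}$ (as edge sets): indeed, two perfect matchings of $K_{2n}$ are $2$-intersecting exactly when they share at least two edges, so the $2$-intersection of $m_1'$ and $m_2'$ would force $|C_{m_1} \cap C_{m_2}| = |m_1' \cap m_2'| \geq 2$.

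The one new ingredient is an extension lemma playing the role of the cyclic-rotation trick in the symmetric-group argument: \emph{if $C$ and $D$ are partial matchings of $[2n]$ with $|C| \le n-2$, then there is a perfect matching $m \supseteq C$ with $m \cap D = C \cap D$.} To prove it, let $U$ be the set of vertices left uncovered by $C$, so $|U| = 2n - 2|C| \ge 4$. The edges of $D$ contained in $U$ form a partial matching $M$ of $U$ (since $D$ is a partial matching), and any edge of $D$ meeting the complement of $U$ is automatically absent from $m$; hence it suffices to find a perfect matching $N$ of $U$ disjoint from $M$ and set $m = C \cup N$. Such an $N$ exists whenever $|U| \ge 4$: order $U$ as $u_1,\dots,u_{2\ell}$ so that the edges of $M$ are the consecutive pairs $\{u_1,u_2\},\{u_3,u_4\},\dots$, and take the shifted matching $N = \bigl\{\{u_{2i},u_{2i+1}\} : 1 \le i \le \ell-1\bigr\} \cup \bigl\{\{u_{2\ell},u_1\}\bigr\}$. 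A short parity check, using $\ell \ge 2$, shows every edge of $N$ joins vertices lying in two different $M$-edges, so $N \cap M = \emptyset$; and $m \cap D = C \cap D$ since any edge of $D$ inside $m$ lies in $C$ or in $N$, and an edge of $N \subseteq \binom{U}{2}$ that belongs to $D$ would lie in $M$.

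With this in hand, apply the extension lemma first with $(C,D) = (C_{m_1},C_{m_2})$ — valid since $|C_{m_1}| = C(f,m_1) \le C(f) \le n-2$ — to get a perfect matching $m_1' \supseteq C_{m_1}$ with $m_1' \cap C_{m_2} = C_{m_1} \cap C_{m_2}$; then apply it again with $(C,D) = (C_{m_2},m_1')$ — valid since a perfect matching is in particular a partial matching, and $|C_{m_2}| \le n-2$ — to get $m_2' \supseteq C_{m_2}$ with $m_2' \cap m_1' = C_{m_2} \cap m_1' = C_{m_1} \cap C_{m_2}$. Since $m_1'$ satisfies $C_{m_1}$, a certificate for $m_1$ at which $f = 1$, we have $f(m_1') = 1$, and likewise $f(m_2') = 1$, so both lie in $\mathcal{F}$ and the argument of the first paragraph finishes the proof.

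I do not expect a genuine obstacle; the only point needing care is the extension lemma, and within it the verification that the shifted matching on $U$ avoids $M$. This is exactly where the hypothesis $C(f) \le n-2$ is used: it guarantees $|U| \ge 4$ so that the shift is nontrivial, mirroring the way \Cref{lem:2-int-coset-sym} (and its proof) break down when $C(f) = n-1$.
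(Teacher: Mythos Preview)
Your argument is correct and follows essentially the same route as the paper: both proofs construct $m_1'$ and then $m_2'$ by ordering the vertices uncovered by the certificate so that the ``dangerous'' edges of the other set become consecutive pairs $\{u_{2\ell-1},u_{2\ell}\}$, and then choose a perfect matching on those vertices that avoids all such pairs (the paper pairs $k_i$ with $k_{i+(n-r)}$, you pair $u_{2i}$ with $u_{2i+1}$ cyclically---different shifts, same idea, both relying on $|U|\ge 4$). One tiny wording issue: your phrase ``every edge of $N$ joins vertices lying in two different $M$-edges'' overstates things when $M$ does not cover all of $U$, but the parity check you describe already gives $N\cap M=\emptyset$ directly, so the argument stands.
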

\begin{proof}
We will show how to construct a perfect matching $m'_1$ satisfying $C_{m_1}$ such that $m'_1 \cap C_{m_2} = C_{m_1} \cap C_{m_2}$. The same argument can be used to construct a perfect matching $m'_2$ satisfying $C_{m_2}$ such that $m'_1 \cap m'_2 = m'_1 \cap C_{m_2}$. It follows that $|C_{m_1} \cap C_{m_2}| = |m'_1 \cap m'_2| \geq 2$, since $f$ is the characteristic function of a $2$-intersecting family.

Let $C_{m_1} = \bigl\{\{i_1,j_1\},\ldots,\{i_r,j_r\}\bigr\}$, where $r \leq n-2$, and let $k_1,\ldots,k_{2(n-r)}$ be the indices not mentioned in $C_{m_1}$. Rearrange the indices so that if $\{k_s,k_t\} \in C_{m_2}$ then $\{s,t\} = \{2\ell-1,2\ell\}$ for some $\ell \in [n-r]$. The perfect matching $m'_1$ has the pair representation
\[
 \{i_1,j_1\},\ldots,\{i_r,j_r\},\{k_1,k_{n-r+1}\},\{k_2,k_{n-r+2}\},\ldots,\{k_{n-r},k_{2(n-r)}\}.
\]
Since $n-r \geq 2$, the new pairs do not intersect $C_{m_2}$, completing the proof.
\end{proof}

Using sensitivity, we can rule out the case $C(f) = n-1$ for functions of degree at most~$2$, just as we did for the symmetric group.

\begin{lemma} \label{lem:C-bound-pms}
If $n \geq 8$ and $f\colon \cM_{2n} \to \{0,1\}$ has degree at most~$2$, then $C(f) \leq n-2$.
\end{lemma}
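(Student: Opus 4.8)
The plan is to imitate the proof of \Cref{lem:C-bound-sym}, replacing transpositions by ``edge switches'' on disjoint blocks of four vertices. The reason we need $n \geq 8$ rather than a smaller bound is that the argument requires four pairwise disjoint four-element blocks inside $[2n]$, which requires $16 \leq 2n$.

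Suppose for contradiction that $n \geq 8$, that $f \colon \cM_{2n} \to \{0,1\}$ has degree at most~$2$, and that $C(f) = n-1$. Relabelling the vertices, we may assume that the perfect matching $m_0$ with edges $\{1,2\},\{3,4\},\ldots,\{2n-1,2n\}$ satisfies $C(f,m_0) = n-1$; replacing $f$ by $1-f$ if necessary (which changes neither the degree nor the certificate complexity, since a certificate for value $b$ with respect to $f$ is a certificate for value $1-b$ with respect to $1-f$), we may assume $f(m_0) = 0$. Partition $16$ of the vertices into four blocks $B_1,\ldots,B_4$, where $B_\ell$ consists of the two $m_0$-edges $\{4\ell-3,4\ell-2\}$ and $\{4\ell-1,4\ell\}$. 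Fix $\ell$. The set of the remaining $n-2$ edges of $m_0$ is a certificate of size $n-2$, hence, since $C(f,m_0) = n-1$, it is not a certificate for $m_0$: some perfect matching satisfying it has $f$-value~$1$. Such a matching agrees with $m_0$ outside $B_\ell$ and rematches the four vertices of $B_\ell$; of the two rematchings of $B_\ell$ different from the $m_0$-one, at least one --- call it the \emph{switch} $s_\ell$ --- produces a matching $m_\ell$ with $f(m_\ell) = 1$.

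Now define $g\colon \{0,1\}^4 \to \{0,1\}$ by letting $g(y_1,\ldots,y_4) = f(m)$, where $m$ agrees with $m_0$ outside $B_1 \cup \cdots \cup B_4$ and, inside each $B_\ell$, uses the switched edges $s_\ell$ when $y_\ell = 1$ and the original $m_0$-edges when $y_\ell = 0$. For any pair $\{i,j\}$, the value of the variable $x_{ij}$ under this parametrization is a constant when $\{i,j\}$ is not contained in a single block (it is then always $0$, unless $\{i,j\}$ is an $m_0$-edge disjoint from all blocks, in which case it is always $1$), and is an affine function of the single variable $y_\ell$ when $\{i,j\} \subseteq B_\ell$. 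Substituting these expressions into a representation of $f$ as a polynomial of degree at most~$2$ shows that $g$ is represented by a polynomial of degree at most~$2$ in $y_1,\ldots,y_4$, hence its (unique) multilinear representation has degree at most~$2$. By construction $g(0,0,0,0) = f(m_0) = 0$, while $g(e_\ell) = f(m_\ell) = 1$ for $\ell = 1,\ldots,4$, where $e_\ell$ denotes the $\ell$th standard unit vector; thus $g$ has sensitivity~$4$. But the exhaustive search in the proof of \Cref{lem:C-bound-sym} (or the alternative argument given there) shows that no function $\{0,1\}^4 \to \{0,1\}$ of degree at most~$2$ has sensitivity~$4$, a contradiction.

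The argument is essentially routine given the symmetric-group case; the one point of difference --- which I expect to be the only place requiring care --- is that, unlike a transposition in $S_n$, an edge switch in $\cM_{2n}$ has no canonical ``direction'': there are two ways to rematch a four-vertex block, and we only know that \emph{one} of them raises the $f$-value. This causes no real difficulty, because we are free to build the correct choice of switch into the definition of $g$; it is the block structure, not the direction of each individual switch, that forces $g$ to have degree at most~$2$.
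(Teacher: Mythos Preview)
The proposal is correct and follows essentially the same approach as the paper: reduce to a Boolean function $g\colon\{0,1\}^4\to\{0,1\}$ of degree at most~$2$ with sensitivity~$4$ by switching edges within four disjoint four-vertex blocks, then invoke the impossibility established in the proof of \Cref{lem:C-bound-sym}. Your version is in fact slightly more careful than the paper's writeup on the one point you flag: the paper fixes a specific rematching $m^{ij}$ of each block and asserts $f(m^{ij})\neq f(m)$, whereas the certificate argument only guarantees that \emph{some} rematching of the block changes the value; your proof handles this correctly by choosing, for each block, whichever of the two rematchings works.
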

\begin{proof}
The proof is by contradiction. Suppose that $n \geq 8$, that $f\colon \cM_{2n} \to \{0,1\}$ has degree at most~$2$, and that $C(f) = n-1$, say $C(f,m) = n-1$ for $m = \bigl\{\{1,2\},\{3,4\},\ldots,\{2n-1,2n\}\bigr\}$. Since $C(f,m) > n-2$, $m \setminus \bigl\{\{2i-1,2i\},\{2j-1,2j\}\bigr\}$ is not a certificate for $m$ for any $i \neq j \in [n]$, and so $f(m^{ij}) \neq f(m)$, where $m^{ij}$ is obtained from $m$ by replacing $\bigl\{\{2i-1,2i\},\{2j-1,2j\}\bigr\}$ with $\bigl\{\{2i-1,2j\},\{2j-1,2i\}\bigr\}$.

We construct a function $g\colon \{0,1\}^4 \to \{0,1\}$ as follows: $g(y_1,y_2,y_3,y_4)$ is the value of $f$ on the perfect matching obtained from $m$ by applying the subset of the operations $^{12},^{34},^{56},^{78}$ specified by the input (for example, if $y_1 = 1$ then we apply $^{12}$). As in the proof of \Cref{lem:C-bound-sym}, this results in a function of degree at most~$2$ satisfying $g(0,0,0,0) \neq g(1,0,0,0) = g(0,1,0,0) = g(0,0,1,0) = g(0,0,0,1)$, which is impossible.
\end{proof}

\subsubsection{Degree reduction} \label{sec:degree-reduction-pms}

Let $\mathcal{F} \subseteq \cM_{2n}$. We denote the restriction of $\mathcal{F}$ to the coset $\dcoset{i}{j}$ by $\mathcal{F}|_{\dcoset{i}{j}}$, which we can think of as a subset of $\cM_{2(n-1)}$.

Let $f$ be the characteristic function of $\mathcal{F}$. The definition of degree using polynomials shows that $\deg f|_{\dcoset{i}{j}} \leq \deg f$. The following analog of \Cref{lem:degree-reduction-sym} shows that if $\mathcal{F}$ is contained in $\dcoset{i}{j}$, then the degree of $f$ strictly decreases when restriction to $\dcoset{i}{j}$.

\begin{lemma} \label{lem:degree-reduction-pms}
Suppose that $f\colon \mathcal{M}_{2n} \to \{0,1\}$ is the characteristic function of a family which is a subset of the coset $\dcoset{i}{j}$. Then $\deg f|_{\dcoset{i}{j}} \leq \max( \deg f - 1, 0 )$.
\end{lemma}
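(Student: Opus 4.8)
The plan is to transcribe the proof of \Cref{lem:degree-reduction-sym} to the perfect matching scheme, the only genuinely new ingredient being an explicit model for the isotypic components $V^{2\lambda}$ in the spirit of \Cref{thm:Vlambda-sym}. Concretely, for $\lambda \vdash n$ I would introduce functions $e_{s,t}, \chi_{s,t}\colon \cM_{2n} \to \mathbb{R}$ indexed by pairs of tableaux of the doubled shape $2\lambda$, defined in close analogy with \Cref{sec:degree-reduction-sym} (roughly: $e_{s,t}(m) = 1$ when $m$ carries the $k$-th row of $s$ into the $k$-th row of $t$ for every $k$, and $\chi_{s,t} = \sum_{\pi \in C(t)} (-1)^\pi e_{s,t^\pi}$, where $C(t)$ is the column stabilizer of $t$ and $(-1)^\pi$ its sign), and record the analog of \Cref{thm:Vlambda-sym}: the $\chi_{s,t}$ with $s,t$ of shape $2\lambda$ span $V^{2\lambda}$. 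This is the content underlying the decomposition $\mathbb{R}[\cM_{2n}] = \bigoplus_\lambda V^{2\lambda}$ quoted in \Cref{sec:degree-pms}, and should be extractable from \cite{CSST08} and \cite{LindzeyThesis}.

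With this in hand I would run the same argument as for \Cref{lem:degree-reduction-sym}. Assume without loss of generality that $\{i,j\} = \{2n-1,2n\}$; if $n = 1$ or $f|_{\dcoset{2n-1}{2n}} = 0$ there is nothing to prove, so set $d := \deg f|_{\dcoset{2n-1}{2n}} \geq 1$. By the spectral description of degree there is $\lambda \vdash n-1$ with $\lambda_1 = (n-1) - d$ and $f|_{\dcoset{2n-1}{2n}}^{=\lambda} \neq 0$, and hence, by orthogonality of the isotypic decomposition together with the spanning statement, $\langle f|_{\dcoset{2n-1}{2n}}, \chi_{s,t} \rangle \neq 0$ for some tableaux $s,t$ of shape $2\lambda$ built on the ground set $[2n] \setminus \{2n-1,2n\}$ (identified with $[2(n-1)]$). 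Let $s',t'$ be obtained from $s,t$ by appending a new bottom row consisting of the two cells $2n-1, 2n$; the resulting shape is $2\mu$ where $\mu$ is $\lambda$ with a part $1$ appended, so $\mu_1 = \lambda_1 = n - (d+1)$. It then suffices to show $\langle f, \chi_{s',t'} \rangle \neq 0$: orthogonality forces $f^{=\mu} \neq 0$, hence $\deg f \geq d+1$, which is the claimed bound (the degenerate cases having been disposed of, so the $\max(\cdot, 0)$ is automatic).

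To establish $\langle f, \chi_{s',t'} \rangle \neq 0$, I would argue exactly as in the symmetric-group proof that the pairing against $f$ kills all but the nice terms. If $\pi' \in C(t')$ moves $2n-1$ or $2n$ off the last cell of its column, then $e_{s',(t')^{\pi'}}$ requires a matching $m$ to send the last row $\{2n-1,2n\}$ of $s'$ into a $2$-set other than $\{2n-1,2n\}$; since $m$ is a fixed-point-free involution this is incompatible with $m(2n-1) = 2n$, so $e_{s',(t')^{\pi'}}$ vanishes on $\dcoset{2n-1}{2n}$ and, $f$ being supported there, $\langle f, e_{s',(t')^{\pi'}} \rangle = 0$. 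Thus only the $\pi'$ fixing both new cells survive; these are precisely the elements of $C(t)$, with the same sign, and for them and for $m \in \dcoset{2n-1}{2n}$ one checks $e_{s',(t')^{\pi'}}(m) = e_{s, t^{\pi}}(m')$, where $m'$ is $m$ with the edge $\{2n-1,2n\}$ deleted. Summing over $\pi$ then gives $\langle f, \chi_{s',t'} \rangle = \langle f|_{\dcoset{2n-1}{2n}}, \chi_{s,t} \rangle \neq 0$.

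The step I expect to be the real obstacle is the first paragraph: pinning down the correct tableau model for $V^{2\lambda}$ and proving the analog of \Cref{thm:Vlambda-sym}. The subtlety is that partitions are doubled, so the ``box'' to be appended is really a domino straddling columns $1$ and $2$, and one has to verify both that column-stabilizer elements disturbing this domino genuinely annihilate $f$ (this is where the involution structure, as opposed to a plain bijection, enters, and where one must confirm that the split of the new row across two columns does not let the matching escape the constraint $m(2n-1)=2n$) and that restriction to the coset $\dcoset{2n-1}{2n}$ carries $\chi_{s',t'}$ to $\chi_{s,t}$ with signs intact. Once the spanning statement for $V^{2\lambda}$ is available, the remainder is a direct transcription of the proof of \Cref{lem:degree-reduction-sym}.
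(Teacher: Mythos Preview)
Your overall strategy matches the paper's, but there is a real gap at exactly the point you flag as delicate. The tableau model for $V^{2\lambda}$ in the perfect matching scheme uses a \emph{single} tableau $t$ of shape $2\lambda$: one sets $e_t(m)=1$ iff $m(i)$ lies in the same row of $t$ as $i$, and $\chi_t=\sum_{\pi\in C(t)}(-1)^\pi e_{t^\pi}$ (this is the paper's \Cref{thm:Vlambda-pms}, from \cite{LindzeyThesis}). Your two-tableau ansatz $\chi_{s,t}$ is not the standard construction here, and it is not clear that such functions lie in $V^{2\lambda}$ at all; since $(S_{2n},H_n)$ is a Gelfand pair, each irreducible appears with multiplicity one in $\mathbb{R}[\cM_{2n}]$, so there is no room for the extra ``source'' tableau $s$. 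You would have to establish this from scratch, and it is not in the references you cite.

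Once you pass to the correct single-tableau model, your claim that ``only the $\pi'$ fixing both new cells survive'' becomes \emph{false}, and this is precisely the complication the paper must work around. The appended row of $t'$ places $2n-1$ in column~$1$ and $2n$ in column~$2$; a column permutation $\pi'$ can send $2n-1$ to some earlier row $r$ within column~$1$ and simultaneously send $2n$ to the \emph{same} row $r$ within column~$2$. In $(t')^{\pi'}$ the elements $2n-1$ and $2n$ then sit together in row~$r$, so $e_{(t')^{\pi'}}(m)$ is perfectly compatible with $m(2n-1)=2n$, and $\langle f, e_{(t')^{\pi'}}\rangle$ need not vanish. These surviving extra terms organize into inner products $\langle f|_{\dcoset{2n-1}{2n}}, \chi_{t^r}\rangle$ for auxiliary tableaux $t^r$ obtained from $t$ by pulling the first two cells of row~$r$ down to a new bottom row. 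The paper handles them by choosing $t$, among all tableaux with the correct first-row length and nonzero pairing, to \emph{maximize the number of rows}: then each $t^r$ either has strictly more rows than $t$ (so its pairing vanishes by maximality) or coincides with $t$ up to a harmless column permutation (giving back the original term). The conclusion is $\langle f,\chi_{t'}\rangle=(\ell+1)\,\langle f|_{\dcoset{2n-1}{2n}},\chi_t\rangle\neq 0$, where $\ell$ counts the length-$2$ rows of $t$, rather than a bare equality. Without this maximality device, or some substitute, the transcription from the symmetric-group proof does not go through.
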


Before we can prove this lemma, we need an explicit description of the subspaces $V^{2\lambda}$, which were mentioned in \Cref{sec:degree-pms}.

Let $\lambda \vdash n$, and let $t$ be a Young tableau of shape $2\lambda$. The function $e_t\colon \cM_{2n} \to \{0,1\}$ is defined as follows: $e_t(m) = 1$ if for all $i \in [2n]$, the element $m(i)$ is in the same row of $t$ as $i$, and $e_t(m) = 0$ otherwise. For example, consider
\[
 t = \ytableaushort{1234,5678}
\]
Then $e_t(m) = 1$ if $m(1),m(2),m(3),m(4) \in \{1,2,3,4\}$ and $m(5),m(6),m(7),m(8) \in \{5,6,7,8\}$.

Given a Young tableau $t$ of shape $2\lambda$, we define a function $\chi_t\colon \mathcal{M}_{2n} \to \{-1,0,1\}$ as follows:
\[
 \chi_t = \sum_{\pi \in C(t)} (-1)^\pi e_{t^\pi}.
\]
These functions span $V^{2\lambda}$.

\begin{theorem}[{\cite[Theorem 5.2.6]{LindzeyThesis}}] \label{thm:Vlambda-pms}
Let $\lambda \vdash n$. The subspace $V^{2\lambda}$ is spanned by the functions $\chi_t$, where $t$ goes over all Young tableaux of shape $2\lambda$.
\end{theorem}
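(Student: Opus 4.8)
The plan is to realize the functions $\chi_t$ as the image of the Specht module $S^{2\lambda}$ under a natural $S_{2n}$‑equivariant map, and then to conclude via Schur's lemma together with the multiplicity‑freeness of the decomposition $\mathbb{R}[\cM_{2n}] = \bigoplus_{\nu \vdash n} V^{2\nu}$ (the Gelfand pair $(S_{2n}, S_2 \wr S_n)$), in which each $V^{2\nu}$ is an irreducible copy of the Specht module $S^{2\nu}$ and distinct $\nu$ give non‑isomorphic summands.

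First I would observe that $e_{t'}(m)$ depends on the tableau $t'$ only through its row partition, that is, through the tabloid $\{t'\}$: indeed $e_{t'}(m) = 1$ exactly when the fixed‑point‑free involution $m$ preserves the partition of $[2n]$ into the rows of $t'$. Hence there is a well‑defined linear map $A \colon M^{2\lambda} \to \mathbb{R}[\cM_{2n}]$ out of the Young permutation module $M^{2\lambda}$ (spanned by the tabloids of shape $2\lambda$), determined by $A(\{t'\}) = e_{t'}$. Viewing $e_{t'}$ as the restriction to $\cM_{2n}$ of the indicator of the Young subgroup $Y_{t'} \le S_{2n}$ preserving the row partition of $\{t'\}$, and using $g\,Y_{t'}\,g^{-1} = Y_{g\{t'\}}$, one checks directly that $A$ is $S_{2n}$‑equivariant. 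Since $S^{2\lambda} \subseteq M^{2\lambda}$ is spanned by the polytabloids $\sum_{\pi \in C(t)} (-1)^\pi \{t^\pi\}$, and $A$ carries such a polytabloid to $\sum_{\pi \in C(t)} (-1)^\pi e_{t^\pi} = \chi_t$, we obtain $\operatorname{span}\{\chi_t : t \text{ of shape } 2\lambda\} = A(S^{2\lambda})$, so the theorem reduces to identifying $A(S^{2\lambda})$ with $V^{2\lambda}$.

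For this I would argue representation‑theoretically: $A|_{S^{2\lambda}}$ is equivariant and $S^{2\lambda}$ is irreducible over $\mathbb{R}$, so $A(S^{2\lambda})$ is either $0$ or an irreducible submodule of $\mathbb{R}[\cM_{2n}]$ isomorphic to $S^{2\lambda}$; in the latter case it must coincide with $V^{2\lambda}$, the unique summand isomorphic to $S^{2\lambda}$, using irreducibility of $V^{2\lambda}$. Thus the theorem comes down to showing $A|_{S^{2\lambda}} \neq 0$, equivalently that $\chi_t$ is not identically zero for some (and hence, by equivariance, every) tableau $t$ of shape $2\lambda$.

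To finish, fix a tableau $t$ of shape $2\lambda$ and let $m^* \in \cM_{2n}$ be the matching that, inside each row of $t$, pairs the entry in column $2k-1$ with the entry in column $2k$ for every $k$; this is a legitimate perfect matching since every row of $2\lambda$ has even length. Because columns $2k-1$ and $2k$ of $2\lambda$ occupy the same set of rows, a short check shows that for $\pi \in C(t)$ one has $e_{t^\pi}(m^*) = 1$ precisely when $\pi$ acts by the same permutation on column $2k-1$ and column $2k$ for every $k$, and every such $\pi$ has sign $(-1)^\pi = \prod_k \bigl((-1)^{\pi_{2k-1}}\bigr)^{2} = +1$. Hence $\chi_t(m^*)$ equals the number of such $\pi$, a positive integer, so $\chi_t \neq 0$. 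I do not anticipate a genuine obstacle; the only points needing care are the equivariance of $A$ and the sign bookkeeping in this last step, which comes out positive precisely because the doubling $2\lambda$ forces the relevant column permutations to occur in equal pairs.
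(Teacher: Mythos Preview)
The paper does not prove this theorem; it is quoted from \cite[Theorem~5.2.6]{LindzeyThesis} and used as a black box, so there is no in-paper argument to compare against.

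Your proposal is correct and is the natural representation-theoretic proof. All the pieces check out: $e_{t'}$ depends only on the tabloid $\{t'\}$, so $A\colon M^{2\lambda}\to\mathbb{R}[\cM_{2n}]$ is well defined; equivariance follows from $gY_{t'}g^{-1}=Y_{g\{t'\}}$; the polytabloid spanning $S^{2\lambda}$ maps to $\chi_t$; and the Gelfand-pair property of $(S_{2n},S_2\wr S_n)$ makes $V^{2\lambda}$ the unique subrepresentation isomorphic to $S^{2\lambda}$, so Schur's lemma gives $A(S^{2\lambda})\in\{0,V^{2\lambda}\}$. Your nonvanishing computation is also right: because columns $2k-1$ and $2k$ of $2\lambda$ have equal length, the condition $e_{t^\pi}(m^*)=1$ forces $\pi_{2k-1}=\pi_{2k}$ for every $k$, each such $\pi$ has sign $+1$, and hence $\chi_t(m^*)=\prod_k(\text{length of column }2k{-}1)!>0$.
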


We can now prove \Cref{lem:degree-reduction-pms}. The proof is along the lines of \Cref{lem:degree-reduction-sym}.

\begin{proof}[Proof of \Cref{lem:degree-reduction-pms}]
Assume, for concreteness, that $i=2n-1$ and $j=2n$. If $n = 1$ then $\deg f|_{\dcoset{2n-1}{2n}} = 0$, so there is nothing to prove. Similarly, if $f|_{\dcoset{2n-1}{2n}} = 0$ then there is nothing to prove. Therefore we can assume that $n \geq 2$ and that $f|_{\dcoset{2n-1}{2n}} \neq 0$.

Let $d = deg f|_{\dcoset{2n-1}{2n}}$. According to the spectral definition of degree (see \Cref{sec:degree-pms}), $f|_{\dcoset{2n-1}{2n}}^{=\lambda} \neq 0$ for some $\lambda \vdash n-1$ such that $\lambda_1 = (n-1)-d$. Since the decomposition into isotypic components is orthogonal, \Cref{thm:Vlambda-pms} implies that $\langle f|_{\dcoset{2n-1}{2n}}, \chi_t \rangle \neq 0$ for some Young tableau $t$ of shape $2\lambda$. Among all tableau $t$ with $2(n-1-d)$ squares on the first lines satisfying $\langle f|_{\dcoset{2n-1}{2n}}, \chi_t \rangle \neq 0$, we choose one which maximizes the number of rows (which is the number of parts in the corresponding partition), say $t$ has $R$ rows.

Let $t'$ be the Young tableau obtained by adding a new row to $t$ consisting only of the numbers $2n-1,2n$. The tableau $t'$ has shape $\mu$ satisfying $\mu_1 = \lambda_1 = n - (d+1)$. We will show that $\langle f, \chi_{t'} \rangle \neq 0$, and so $\deg f \geq d+1$ by the spectral definition of degree.

If $\pi' \in C(t')$ is any permutation such that $2n-1,2n$ do not end up in the same row, then $\langle f, e_{(t')^{\pi'}} \rangle = 0$, since $e_{(t')^{\pi'}}(m) \neq 0$ implies that $m(2n-1) \neq 2n$, and so $f(m) = 0$. Therefore
\[
 \langle f, \chi_{t'} \rangle = \sum_{r=1}^{R+1} \sum_{\substack{\pi' \in C(t') \\ 2n-1,2n \text{ are on} \\ \text{row $r$ of $(t')^{\pi'}$}}} (-1)^{\pi'} \langle f, e_{(t')^{\pi'}} \rangle.
\]
For $r \leq R$, let $t^r$ be the tableau obtained from $t$ by taking the first two elements of row $r$, moving them to a new row at then very end, removing row $r$ if it is empty, and otherwise sliding the remainder of row $r$ to the left. For example:
\[
 t = \ytableaushort{1234,56,78} \quad
 t^1 = \ytableaushort{34,56,78,12} \quad
 t^2 = \ytableaushort{1234,78,56} \quad
 t^3 = t
\]
We also define $t^{R+1} = t$.
Observe that
\[
 \langle f, \chi_{t'} \rangle = \sum_{r=1}^{R+1} \sum_{\pi \in C(t^r)} (-1)^\pi \langle f|_{\dcoset{2n-1}{2n}}, e_{(t^r)^\pi} \rangle = \sum_r \langle f|_{\dcoset{2n-1}{2n}}, \chi_{t^r} \rangle.
\]
If the $r$-th row of $t$ contains more than two elements then $t^r$ contains more rows than $t$, and so by the choice of $t$, we have $\langle f|_{\dcoset{2n-1}{2n}}, \chi_{t^r} \rangle = 0$. Otherwise, $\chi_{t^r}$ differs from $\chi_t$ by an identical permutation of the first two columns, and so $\langle f|_{\dcoset{2n-1}{2n}}, \chi_{t^r} \rangle = \langle f|_{\dcoset{2n-1}{2n}}, \chi_t \rangle$. Therefore, denoting by $\ell$ the number of rows of $t$ of length~$2$, we have
\[
 \langle f, \chi_{t'} \rangle = (\ell+1) \langle f|_{\dcoset{2n-1}{2n}}, \chi_t \rangle \neq 0.
\]
The reason we get $\ell+1$ rather than $\ell$ is the choice $r = R+1$, which always leads to $t^{R+1} = t$.
\end{proof}

\section{Main theorem: symmetric group} \label{sec:main-sym}

\subsection{Structure theorem} \label{sec:structure-sym}

We start with a simple observation on $2$-intersecting families, whose statement requires a few definitions.

Two cosets $\coset{i_1}{j_1},\coset{i_2}{j_2}$ are \emph{compatible} if their intersection is non-empty. In other words, either the two cosets are equal, or $i_1 \neq i_2$ and $j_1 \neq j_2$.
A family $\mathcal{F} \subseteq S_n$ is \emph{$m$-covered}, for some integer $m \geq 1$, if there are $m$ compatible cosets $\coset{i_1}{j_1},\ldots,\coset{i_m}{j_m}$ such that $\mathcal{F} \subseteq \coset{i_1}{j_1} \cup \cdots \cup \coset{i_m}{j_m}$. 

\begin{lemma} \label{lem:covering-sym}
Let $f$ be the characteristic function of a $2$-intersecting family $\mathcal{F} \subseteq S_n$, and suppose that $C(f) \leq n-2$.

If $\alpha \in \mathcal{F}$ then $\mathcal{F}$ is $m$-covered for $m = C(f,\alpha) - 1$.
\end{lemma}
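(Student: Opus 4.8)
The plan is to let $C_\alpha$ be a minimum certificate for $\alpha$, so $|C_\alpha| = C(f,\alpha) =: c$, and to produce $c-1$ compatible cosets covering $\mathcal{F}$ by deleting one entry of $C_\alpha$ at a time. Write $C_\alpha = \{(i_1,j_1),\ldots,(i_c,j_c)\}$ and for each $k \in [c]$ let $D_k = C_\alpha \setminus \{(i_k,j_k)\}$, a set of $c-1$ pairs. Since $C_\alpha$ is a \emph{minimum} certificate, each $D_k$ is \emph{not} a certificate for $\alpha$, so there is a permutation $\gamma_k$ satisfying $D_k$ with $f(\gamma_k) = 0$; in particular $\gamma_k(i_k) \neq j_k$, for otherwise $\gamma_k$ would satisfy all of $C_\alpha$ and force $f(\gamma_k)=1$. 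I will use these "witnesses" $\gamma_k$ to constrain every $\beta \in \mathcal{F}$.

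The key step is the following claim: for every $\beta \in \mathcal{F}$ there is an index $k \in [c]$ with $\beta(i_k) = j_k$. Grant this for a moment: it says $\mathcal{F} \subseteq \coset{i_1}{j_1} \cup \cdots \cup \coset{i_c}{j_c}$, an $m$-cover with $m = c$, not $c-1$; to get down to $c-1$ I need to argue that one of these $c$ cosets is redundant. This is where $\alpha$ itself helps: $\alpha$ satisfies $C_\alpha$, so $\alpha$ lies in \emph{all} $c$ of the cosets $\coset{i_k}{j_k}$, and one can hope that whichever coset is "least used" can be absorbed — more carefully, I would show that in fact every $\beta \in \mathcal{F}$ satisfies at least \emph{two} of the equations $\beta(i_k) = j_k$, which would let me drop any single coset from the cover and still cover $\mathcal{F}$ (if $\beta$ missed the dropped one it still lies in another). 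To see $\beta$ satisfies two of them: apply \Cref{lem:2-int-coset-sym} to $\alpha$ and $\beta$ with their minimum certificates — but $\beta$'s minimum certificate need not be $C_\alpha$-related, so instead I argue directly. Fix $\beta \in \mathcal{F}$. For each $k$, consider $\gamma_k$ (the witness for $D_k$) together with $\beta$: both have $f = \,?$ — no, $f(\gamma_k) = 0$, so $2$-intersection does not apply to the pair $(\gamma_k,\beta)$. The right move is: $\gamma_k$ and $\alpha$ both... also $f(\gamma_k)=0$. So the witnesses interact with $\mathcal{F}$ only through the structure of certificates.

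Let me restructure: the honest route to the claim "$\beta(i_k) = j_k$ for some $k$" is to suppose not, i.e. $\beta(i_k) \neq j_k$ for all $k$, and derive a contradiction with $C(f) \le n-2$ via \Cref{lem:2-int-coset-sym} applied to a modification. Indeed, if $\beta$ agrees with $C_\alpha$ on no coordinate, I can try to build a permutation $\alpha'$ satisfying $C_\alpha$ (hence $f(\alpha')=1$) with $|\alpha' \cap \beta| \le 1$, contradicting $2$-intersection — this is exactly the disjointification technique of \Cref{lem:2-int-coset-sym}, now run between the \emph{coset} $C_\alpha$ and the \emph{permutation} $\beta$, and it needs the slack $|C_\alpha| \le n-2$. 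Running the same argument while additionally forbidding agreement on one more coordinate shows $\beta$ must agree with $C_\alpha$ on \emph{at least two} coordinates, which gives the $(c-1)$-cover as explained above (drop any one coset; every $\beta$ still lies in one of the remaining $c-1$). The main obstacle is precisely this last combinatorial construction: arranging the "free" values of $\alpha'$ — the ones on indices outside $C_\alpha$ — so that $\alpha'$ avoids agreeing with $\beta$ except on the one or two forced coordinates, while staying a genuine permutation; this is a bipartite-matching / systems-of-distinct-representatives bookkeeping exactly as in the proof of \Cref{lem:2-int-coset-sym}, and the inequality $c \le n-2$ (equivalently $n - c \ge 2$ free indices) is what makes it go through. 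Everything else — extracting the witnesses $\gamma_k$ from minimality, and passing from "agree on $\ge 2$ coordinates of $C_\alpha$" to "$m$-covered with $m = c-1$" — is routine.
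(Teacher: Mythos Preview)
Your approach works and lands on the same core fact as the paper: every $\beta \in \mathcal{F}$ agrees with the minimum certificate $C_\alpha$ in at least two places, so dropping any one entry of $C_\alpha$ leaves a cover by $c-1$ compatible cosets. The difference is only in how this fact is established. You re-run the disjointification construction from the proof of \Cref{lem:2-int-coset-sym}, now between $C_\alpha$ and the full permutation $\beta$: build $\alpha'$ satisfying $C_\alpha$ with $\alpha' \cap \beta = C_\alpha \cap \beta$ (the cyclic-shift trick on the $n-c \geq 2$ free indices does exactly this, and no separate ``additionally forbidding one more coordinate'' step is needed --- the single construction already gives $|\alpha' \cap \beta| = |C_\alpha \cap \beta|$, so $|C_\alpha \cap \beta| \leq 1$ immediately contradicts $2$-intersection). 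This is correct. But your dismissal of applying \Cref{lem:2-int-coset-sym} directly is unfounded: the paper simply takes a minimum certificate $C_\beta$ for $\beta$, invokes the lemma to get $|C_\alpha \cap C_\beta| \geq 2$, and then observes that since $\beta$ satisfies $C_\beta$, the two common pairs $(i_s,j_s)$ force $\beta(i_s) = j_s$. That $C_\beta$ is ``not $C_\alpha$-related'' is no obstacle --- the lemma delivers the intersection bound regardless, and the inclusion $C_\beta \subseteq \beta$ (as certificates) transfers it to $\beta$. The witnesses $\gamma_k$ are, as you noticed mid-argument, red herrings and can be deleted.
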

\begin{proof}
Let $C_\alpha = \{(i_1,j_1),\ldots,(i_{m+1},j_{m+1})\}$ be a minimal certificate for $\alpha$. We will show that $\mathcal{F} \subseteq \coset{i_1}{j_1} \cup \cdots \cup \coset{i_m}{j_m}$.

Suppose that $\beta \in \mathcal{F}$, and let $C_\beta$ be a minimal certificate for $\beta$. According to \Cref{lem:2-int-coset-sym}, $|C_\alpha \cap C_\beta| \geq 2$. Therefore $(i_s,j_s) \in C_\beta$ for some $s \in [m]$, implying that $\beta \in \coset{i_s}{j_s} \subseteq \coset{i_1}{j_1} \cup \cdots \cup \coset{i_m}{j_m}$.
\end{proof}

The following structure lemma provides a kind of converse to \Cref{lem:covering-sym}.

\begin{lemma} \label{lem:structure-sym}
Let $\mathcal{F} \subseteq S_n$ be an $m$-covered family, say $\mathcal{F} \subseteq \coset{i_1}{j_1} \cup \cdots \cup \coset{i_m}{j_m}$. Suppose that its characteristic function $f$ has degree at most~$2$.

Every $\alpha$ has a certificate of the form $\{(i_1,\alpha(i_1)),\ldots,(i_m,\alpha(i_m)),(i,j)\}$, and in particular, $C(f) \leq m + 1$.
\end{lemma}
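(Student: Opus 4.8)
The plan is to prove the first assertion by induction on $m$, with $n$ ranging freely; the ``in particular'' clause is then immediate, since a certificate of the stated form has at most $m+1$ elements. Before starting the induction I would normalize the cover: by merging equal cosets in the list $\coset{i_1}{j_1},\dots,\coset{i_m}{j_m}$, and using that two distinct \emph{compatible} cosets have distinct $i$-indices and distinct $j$-indices, we may assume $i_1,\dots,i_m$ are distinct and $j_1,\dots,j_m$ are distinct. Merging only decreases $m$, and since enlarging a certificate keeps it a certificate, a certificate found for the reduced cover can be padded with the pairs $(i_s,\alpha(i_s))$ for the discarded indices to give one of the form required for the original list.

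The base case $m=1$ is the heart of the matter. Fix $\alpha\in S_n$. If $\alpha(i_1)\neq j_1$ then $\alpha\notin\coset{i_1}{j_1}\supseteq\mathcal F$, so $f(\alpha)=0$ and $\{(i_1,\alpha(i_1))\}$ is a certificate (every $\beta$ with $\beta(i_1)=\alpha(i_1)$ lies outside $\mathcal F$); pad it with some $(i,\alpha(i))$, $i\neq i_1$. If $\alpha(i_1)=j_1$, apply \Cref{lem:degree-reduction-sym}, which is legitimate because $\mathcal F\subseteq\coset{i_1}{j_1}$, to get $\deg f|_{\coset{i_1}{j_1}}\le 1$, and then \Cref{lem:degree1-cert-sym} to obtain a certificate $\{(i,j)\}$ (or the empty certificate) for $\alpha$ with respect to $f|_{\coset{i_1}{j_1}}$, where necessarily $i\neq i_1$ and $j\neq j_1$; adjoining the pair $(i_1,j_1)=(i_1,\alpha(i_1))$ yields the desired certificate for $\alpha$ with respect to $f$. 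This is precisely the place where the argument relies on degree-$1$ functions over $S_n$ having certificate complexity $1$ --- the fact that fails for perfect matchings and forces a workaround there.

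For the inductive step $m\ge 2$, restrict $f$ to the coset $\coset{i_1}{\alpha(i_1)}$; the restriction still has degree at most $2$ by the polynomial definition of degree. The calculation to carry out is how the cover of $\mathcal F$ descends: for $s\ge 2$, the coset $\coset{i_s}{j_s}$ intersects $\coset{i_1}{\alpha(i_1)}$ in the coset $i_s\mapsto j_s$ of $S_{n-1}$ exactly when $j_s\neq\alpha(i_1)$, whereas $\coset{i_1}{j_1}$ contributes nothing unless $\alpha(i_1)=j_1$ (in which case it contributes all of $S_{n-1}$). Hence the restricted family is $(m-1)$-covered when $\alpha(i_1)\notin\{j_2,\dots,j_m\}$, and $(m-2)$-covered when $\alpha(i_1)=j_t$ for the unique such $t\ge 2$. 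If this covering number is at least $1$, the inductive hypothesis produces a certificate $\{(i_s,\alpha(i_s)):s\in T\}\cup\{(i,j)\}$ for $\alpha|_{[n]\setminus\{i_1\}}$ with $T\subseteq\{2,\dots,m\}$ and $i\neq i_1$; adjoining $(i_1,\alpha(i_1))$ together with $(i_s,\alpha(i_s))$ for $s\in\{2,\dots,m\}\setminus T$ gives a certificate for $\alpha$ of the required form. The only situation in which the covering number is $0$ is $m=2$ with $\alpha(i_1)=j_2$, where the restricted family is empty, so $f(\alpha)=0$ and $\{(i_1,\alpha(i_1)),(i_2,\alpha(i_2))\}$ already works.

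I expect the main difficulty to be the bookkeeping rather than any single hard step: one must check that the covering number strictly drops at every restriction (guaranteed by the normalization to distinct indices) and that the ``extra'' pair $(i,j)$ passed up from a smaller instance never collides with any $i_s$ in a way that breaks the stated form --- it cannot, since in the restricted problem the index set omits $i_1$ and the value set omits $\alpha(i_1)$. The genuinely substantive ingredient is the base case, where \Cref{lem:degree-reduction-sym} and \Cref{lem:degree1-cert-sym} combine to bound certificate complexity starting from a bound on the degree.
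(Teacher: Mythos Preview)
Your inductive step has a genuine gap in the case $\alpha(i_1)=j_1$. You correctly observe that when you restrict to $\coset{i_1}{\alpha(i_1)}$, the coset $\coset{i_1}{j_1}$ ``contributes all of $S_{n-1}$'' in this case, but you then assert that the restricted family is $(m-1)$-covered whenever $\alpha(i_1)\notin\{j_2,\dots,j_m\}$. That implication is false precisely when $\alpha(i_1)=j_1$: since $\coset{i_1}{j_1}$ restricts to the whole of $S_{n-1}$, the cover $\coset{i_1}{j_1}\cup\cdots\cup\coset{i_m}{j_m}$ gives you no information at all about $\mathcal F|_{\coset{i_1}{j_1}}$, and there is no reason for it to be contained in $\coset{i_2}{j_2}\cup\cdots\cup\coset{i_m}{j_m}$. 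So the induction hypothesis cannot be invoked with the indices $i_2,\dots,i_m$. Trying to restrict by a different $i_s$ first does not help either: if $\alpha(i_s)=j_s$ for every $s$ (i.e.\ $\alpha$ lies in the intersection of all the covering cosets), the same obstruction arises no matter which coordinate you peel off.

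This ``diagonal'' case is exactly the one that demands extra work in the paper's proof. The paper runs the induction on a different statement --- that the restriction of $f$ to $\coset{i_1}{k_1}\cap\cdots\cap\coset{i_m}{k_m}$ has degree at most~$1$ for \emph{every} choice of distinct $k_1,\dots,k_m$ --- and handles the tuple $(k_1,\dots,k_m)=(j_1,\dots,j_m)$ by writing
\[
 g := x_{i_1j_1}\cdots x_{i_mj_m}\,f \;=\; f \;-\!\! \sum_{(k_1,\dots,k_m)\neq(j_1,\dots,j_m)} x_{i_1k_1}\cdots x_{i_mk_m}\,f,
\]
using the already-established cases to bound each summand by degree $m+1$, and then applying \Cref{lem:degree-reduction-sym} $m$ times to $g$. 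Your argument is fine for every other tuple, but without some device of this kind the case $\alpha\in\coset{i_1}{j_1}\cap\cdots\cap\coset{i_m}{j_m}$ remains unproved.
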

\begin{proof}
We will prove by induction on $m$ that if $\mathcal{F} \subseteq S_n$ is $m$-covered, say $\mathcal{F} \subseteq \coset{i_1}{j_1} \cup \cdots \cup \coset{i_m}{j_m}$, and its characteristic function $f$ has degree at most~$2$, then for every distinct $k_1,\ldots,k_m \in [n]$, the restriction of $f$ to $\coset{i_1}{k_1} \cap \cdots \cap \coset{i_m}{k_m}$ has degree at most~$1$. By \Cref{lem:degree1-cert-sym}, this restriction has certificate complexity at most~$1$. For any $\alpha \in S_n$, choosing $k_1 = \alpha(i_1),\ldots,k_m = \alpha(i_m)$, we deduce that $\alpha$ has a certificate of the required form.

We start with the base case $m = 1$. Let $\mathcal{F} \subseteq S_n$ be such that $\mathcal{F} \subseteq \coset{i_1}{j_1}$, and suppose that its characteristic function $f$ has degree at most~$2$. \Cref{lem:degree-reduction-sym} directly implies that the restriction of $f$ to $\coset{i_1}{j_1}$ has degree at most~$1$, as required.

Suppose now that $m \geq 2$. Let $\mathcal{F} \subseteq S_n$ be such that $\mathcal{F} \subseteq \coset{i_1}{j_1} \cup \cdots \cup \coset{i_m}{j_m}$, and suppose that its characteristic function $f$ has degree at most~$2$. Consider any distinct $k_1,\ldots,k_m \in [n]$. We need to show that the restriction of $f$ to $\coset{i_1}{k_1} \cap \cdots \cap \coset{i_m}{k_m}$ has degree at most~$1$.

Suppose first that $k_s \neq j_s$ for some $s \in [m]$, say $k_m \neq j_m$. The restriction of $\mathcal{F}$ to $\coset{i_m}{k_m}$ is contained in $\coset{i_1}{j_1} \cup \cdots \cup \coset{i_{m-1}}{j_{m-1}}$, and its characteristic function has degree at most~$2$. Therefore the induction hypothesis implies that the restriction of $f$ to $\coset{i_1}{k_1} \cap \cdots \cap \coset{i_m}{k_m}$ has degree at most~$1$.

It remains to show that the restriction of $f$ to $\coset{i_1}{j_1} \cap \cdots \cap \coset{i_m}{j_m}$ has degree at most~$1$. To see this, consider the function
\[
 g := x_{i_1j_1} \cdots x_{i_mj_m} f = f - \sum_{\substack{k_1,\ldots,k_m \\ (k_1,\ldots,k_m) \neq (j_1,\ldots,j_m)}} x_{i_1k_1} \cdots x_{i_mk_m} f.
\]
We showed above that the restriction of $f$ to $\coset{i_1}{k_1} \cap \cdots \cap \coset{i_m}{k_m}$ has degree at most~$1$ whenever $(k_1,\ldots,k_m) \neq (j_1,\ldots,j_m)$. Therefore every term in the sum on the right-hand side has degree at most $m+1$, since it can be represented as $x_{i_1k_1} \cdots x_{i_mk_m}$ times the linear polynomial representing the restriction of $f$ to $\coset{i_1}{k_1} \cap \cdots \cap \coset{i_m}{k_m}$. Consequently, $\deg g \leq \max(\deg f, m+1) = m+1$.

By construction, $g$ is the characteristic function of a family which is contained in the coset intersection $\mathcal{I} = \coset{i_1}{j_1} \cap \cdots \cap \coset{i_m}{j_m}$. Applying \Cref{lem:degree-reduction-sym} repeatedly, we conclude that the restriction of $g$ to $\mathcal{I}$ has degree at most $\max(m+1-m,0) = 1$. Since the restriction of $f$ to $\mathcal{I}$ is the same as the restriction of $g$ to $\mathcal{I}$, this completes the proof of the induction step.
\end{proof}

\subsection{Proof of main theorem} \label{sec:proof-sym}

We are now ready to prove our main upper bound, from which \Cref{thm:main-sym} will quickly follow.

\begin{lemma} \label{lem:main-ub-sym}
Let $\mathcal{F} \subseteq S_n$ be a $2$-intersecting family whose characteristic function $f$ has degree at most~$2$. Suppose that $C(f) \leq n-2$.

The family $\mathcal{F}$ is contained in the union of at most $T$ many $C(f)$-cosets, where
\[
 T = \frac{2\lfloor C(f)/2 \rfloor (C(f) - 1)!}{2^{\lfloor C(f)/2 \rfloor}}.
\]
Moreover, $C(f) \geq 2$.
\end{lemma}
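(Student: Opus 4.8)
The assertion $C(f) \ge 2$ is quick: since $\mathcal{F} \ne \emptyset$, pick $\alpha \in \mathcal{F}$ and apply \Cref{lem:2-int-coset-sym} with $\beta = \alpha$; using $C(f) \le n-2$ it gives $|C_\alpha| = |C_\alpha \cap C_\alpha| \ge 2$ for any minimum certificate $C_\alpha$ of $\alpha$, so $C(f) \ge C(f,\alpha) = |C_\alpha| \ge 2$. Write $c := C(f)$, and for what follows let $T_c$ denote the value of $T$ when $C(f) = c$.

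For the covering bound I would first extract structure. For every $\alpha \in \mathcal{F}$, \Cref{lem:covering-sym} makes $\mathcal{F}$ be $(C(f,\alpha)-1)$-covered, and \Cref{lem:structure-sym} then yields $C(f) \le C(f,\alpha)$; combined with $C(f,\alpha) \le C(f)$ this forces $C(f,\alpha) = c$ for every $\alpha \in \mathcal{F}$. In particular $\mathcal{F}$ is $(c-1)$-covered, and after relabelling I may assume $\mathcal{F} \subseteq \coset{1}{1} \cup \cdots \cup \coset{c-1}{c-1}$. This covering is \emph{irredundant}: a $(c-2)$-covering would give $C(f) \le c-1$ by \Cref{lem:structure-sym}, and --- applying \Cref{lem:degree-reduction-sym} repeatedly --- for $c \ge 3$ the family $\mathcal{F}$ is not even contained in a single $(c-1)$-coset. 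Finally, re-reading the proof of \Cref{lem:structure-sym} shows that each $\alpha \in \mathcal{F}$ has a minimum certificate of the special shape
\[
 C_\alpha = \{(1,\alpha(1)),\dots,(c-1,\alpha(c-1)),(i_\alpha,\alpha(i_\alpha))\}, \qquad i_\alpha \ge c;
\]
the $c$-cosets $[C_\alpha]$ all lie inside $\mathcal{F}$ and cover it, and by \Cref{lem:2-int-coset-sym} any two of them share at least two constraints. It remains to extract from $\{[C_\alpha] : \alpha \in \mathcal{F}\}$ a subcover of size at most $T_c$.

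I would handle the base case $c = 2$ directly: then $\mathcal{F} \subseteq \coset{1}{1}$, so $f|_{\coset{1}{1}}$ has degree at most $1$ by \Cref{lem:degree-reduction-sym}; its $1$-set is a union of cosets of $S_{n-1}$ by \Cref{thm:degree1-classification-sym}, and if it were a union of two or more of them then $\mathcal{F}$ would contain two permutations agreeing in exactly the coordinate $1$ (derange everything else, possible since $n \ge 4$ follows from $C(f) \le n-2$), contradicting $2$-intersection; hence $\mathcal{F}$ is a single $2$-coset, matching $T_2 = 1$. For $c \ge 3$ I would analyse the certificates $C_\alpha$ through their \emph{diagonal sets} $D_\alpha = \{\,s \le c-1 : \alpha(s) = s\,\}$ (non-empty, and by irredundancy every singleton subset of $[c-1]$ occurs as some $D_\alpha$), their blocks $(\alpha(1),\dots,\alpha(c-1))$, and their extra pairs $(i_\alpha,\alpha(i_\alpha))$. \Cref{lem:2-int-coset-sym} translates into: the blocks of any two members of $\mathcal{F}$ agree in at least one coordinate, and if they agree in exactly one coordinate then the extra pairs coincide. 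These conditions are rigid --- already for $c = 3$ the two ``essential'' permutations forced by irredundancy have disjoint one-element diagonal sets with no further coordinate to reconcile them, so $c = 3$ is impossible, and for $c = 4$ the surviving diagonal types collapse to the $c-1$ singletons together with the full set $[c-1]$, yielding at most $c$ cosets (well within $T_4$). In general I expect the structure to confine the blocks to a bounded pattern and to force the extra pairs to be shared across large sub-families, so that peeling off a canonical pair of coordinates gives a recursion of the shape $T_c = \binom{c}{2} T_{c-2}$ (with the odd cases degenerate), which is exactly the closed form of $T$.

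The genuine difficulty is this last step. A priori, inside a single $(c-1)$-fold coset restriction the pertinent degree-$\le 1$ function can have a support that is a union of as many as $n-c+1$ cosets, and it is only by playing it off --- through \Cref{lem:2-int-coset-sym} --- against members of $\mathcal{F}$ lying outside that restriction that one forces the support down to a single coset; carrying this out while simultaneously controlling the overlaps among the $c-1$ cosets of the covering (so as to reach $T_c$ rather than a cruder bound such as $(c-1)T_{c-1}$) is where the combinatorial work lies, and is the part I expect to require the most care.
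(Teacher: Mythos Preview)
Your setup is correct and essentially matches the paper: the ``$C(f)\ge 2$'' step, the equality $C(f,\alpha)=c$ for all $\alpha\in\mathcal F$, the irredundant $(c-1)$-covering, and the standard form $C_\alpha=\{(1,\alpha(1)),\dots,(c-1,\alpha(c-1)),(i_\alpha,\alpha(i_\alpha))\}$ with $i_\alpha\ge c$ are exactly what the paper establishes first. Your extra observation that $c=3$ is impossible is correct (and sharper than the lemma), but it is not needed for the lemma as stated.

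The genuine gap is the one you flag yourself: you have not carried out the counting. Your proposed route through ``diagonal sets'', block patterns, and a recursion $T_c=\binom{c}{2}T_{c-2}$ is not the paper's argument and, as written, is only a hope; in particular it is unclear how your block analysis would avoid the cruder bound you mention. The paper sidesteps all of this with a short direct argument. Having shown that $\mathcal F$ is not $(c-2)$-covered, it \emph{fixes in advance}, for every set $X$ of at most $c-2$ compatible cosets, a permutation $\alpha_X\in\mathcal F$ avoiding all cosets in $X$, together with its standard certificate $C_X$. The key observation is a colouring: in any standard certificate the pair in position $k\le c-1$ gets colour $k$ and the extra pair gets colour $c$; every standard certificate has exactly one pair of each colour, and the colour of a pair is determined by the pair itself. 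Now, for a given $\alpha\in\mathcal F$, build $C_\alpha$ in stages $X_0\subset X_1\subset\cdots\subset X_r=C_\alpha$ (with $|X_s|=2s$ in the even case, $|X_s|=2s+1$ in the odd case): at each step, $|X_s|\le c-2$, so $C_{X_s}$ is defined; by \Cref{lem:2-int-coset-sym} the certificates $C_\alpha$ and $C_{X_s}$ share at least two pairs, and since $\alpha_{X_s}$ avoids $X_s$ these two pairs lie in $C_\alpha\setminus X_s$. The colouring forces their colours to be disjoint from those already used in $X_s$, and in the \emph{fixed} certificate $C_{X_s}$ there is exactly one pair of each colour, so there are precisely $\binom{c-|X_s|}{2}$ possibilities for $X_{s+1}\setminus X_s$. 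Multiplying over $s$ gives the stated $T$. This replaces your intended recursion on $c$ by a product over steps, and the fact that the witnesses $C_X$ are chosen once and for all (independent of $\alpha$) is what turns ``at most $\binom{c-2s}{2}$ choices'' into a genuine bound on the number of distinct $C_\alpha$.
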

\begin{proof}
If $\mathcal{F}$ is empty then the lemma is vacuously true. Otherwise, choose an arbitrary $\beta \in \mathcal{F}$. \Cref{lem:2-int-coset-sym}, applied to $\alpha = \beta$, shows that $C(f,\beta) \geq 2$. \Cref{lem:covering-sym} shows that $\mathcal{F}$ is $m$-covered, where $m = C(f,\beta) - 1$. Without loss of generality, we can assume that $\mathcal{F} \subseteq \coset{1}{1} \cup \cdots \cup \coset{m}{m}$.

\Cref{lem:structure-sym} shows that $C(f) \leq m + 1$, and so $m = C(f) - 1$ and $C(f) = C(f,\beta) \geq 2$. The \lcnamecref{lem:structure-sym} also shows that every $\alpha \in \mathcal{F}$ has a certificate $C_\alpha$ of the form $\{(1,\alpha(1)),\ldots,(m,\alpha(m)),(i,j)\}$.
It also implies that $\mathcal{F}$ is not $(m-1)$-covered (otherwise its certificate complexity would be at most $m < C(f)$). Therefore, if $X$ is any set of at most $m-1$ compatible cosets, then there is $\alpha_X \in \mathcal{F}$ which is not contained in the union of the cosets in $X$. Fix a certificate $C_X$ for $\alpha_X$ of the form $\{(1,\alpha_X(1)),\ldots,(m,\alpha_X(m)),(i_X,j_X)\}$.

We think of the certificates $C_\alpha,C_X$ as colored using $m + 1$ colors: for $k \in [m]$, a pair $(k,\ast)$ is colored $k$, and the remaining pair $(i,j)$ is colored $m+1$. We can determine the color of a pair $(i',j')$ given only the pair: if $i' \in [m]$ then the color is $i'$, and otherwise the color is $m+1$.

We will prove the lemma by showing that if $\alpha \in \mathcal{F}$ then $C_\alpha$ is one of at most $T$ possible certificates. Since $C_\alpha$ corresponds to a $C(f)$-coset containing $\alpha$, the lemma would follow.
In the proof, we will freely identify a pair $(i,j)$ with the corresponding coset $\coset{i}{j}$, and from now on we will only ever mention cosets.

\smallskip

The proof is slightly different depending on the parity of $C(f)$. We start with the case in which $C(f) = 2r$ is even. Let $\alpha \in \mathcal{F}$.
We define a sequence $X_0 \subset \cdots \subset X_r \subseteq C_\alpha$, where $X_s$ is a collection of $2s$ compatible cosets, as follows. The starting point is $X_0 = \emptyset$. Now let $s < r$, and suppose that $X_s$ has been defined.  Since $|X_s| = 2s \leq C(f) - 2 = m - 1$, there is a permutation $\alpha_{X_s} \in \mathcal{F}$ which lies outside of the union of the cosets in $X_s$. According to \Cref{lem:2-int-coset-sym}, the certificates $C_\alpha$ and $C_{X_s}$ must have at least two cosets in common $q_1,q_2$. The choice of $\alpha_{X_s}$ guarantees that $q_1,q_2 \notin X_s$, and we define $X_{s+1} = X_s \cup \{q_1,q_2\}$. By construction, $X_{s+1} \subseteq C_\alpha$, and so the cosets in $X_{s+1}$ are indeed compatible.

Note that $|X_r| = 2r = |C_\alpha|$, and so $X_r = C_\alpha$. For $s < r$, given $X_s$, the set $X_{s+1} \setminus X_s$ consists of two cosets belonging to $C_{X_s}$. Moreover, since $X_{s+1} \subseteq C_\alpha$ and $C_\alpha$ contains one coset of each color, the colors of the two cosets in $X_{s+1} \setminus X_s$ are different from the colors of the cosets in $X_s$. Since $C_{X_s}$ also contains one coset of each color, this means that there are $\binom{|C_{X_s}| - |X_s|}{2} = \binom{C(f) - 2s}{2}$ choices for $X_{s+1} \setminus X_s$. In total, the number of possible choices for $C_\alpha$ is
\[
 \prod_{s=0}^{r-1} \binom{C(f) - 2s}{2} =
 \frac{C(f)!}{2^r},
\]
matching the formula for $T$ in the statement of the lemma.

\smallskip

Now suppose that $C(f) = 2r+1$ is odd. Let $\alpha \in \mathcal{F}$. We define a sequence $X_0 \subset \cdots \subset X_r \subseteq C_\alpha$, where $X_s$ is a collection of $2s+1$ compatible cosets, as follows. We start by defining $X_0$. According to \Cref{lem:2-int-coset-sym}, the certificates $C_\alpha$ and $C_\emptyset$ must contain at least two cosets in common $q_1,q_2$. At least one of these cosets is of the form $(k,\alpha(k))$ for some $k \in [m]$. We define $X_0 = \{(k,\alpha(k))\}$. Now let $s < r$, and suppose that $X_s$ has been defined. Since $|X_s| = 2s+1 \leq C(f) - 2 = m - 1$, we can define $X_{s+1}$ as in the case of even $C(f)$.

Note that $|X_r| = 2r+1 = |C_\alpha|$, and so $X_r = C_\alpha$. The certificate $X_0$ contains a single coset $(k,\ell) \in C_\emptyset$, where $k \in [m]$, and so there are $m = C(f) - 1$ choices for $X_0$. For $s < r$, given $X_s$, there are $\binom{|C_{X_s}| - |X_s|}{2} = \binom{C(f) - 2s - 1}{2}$ choices for $X_{s+1}$ (this is the same as in the case of even $C(f)$). In total, the number of possible choices for $C_\alpha$ is
\[
 (C(f) - 1) \prod_{s=0}^{r-1} \binom{C(f) - 2s - 1}{2} =
 (C(f) - 1) \frac{(C(f) - 1)!}{2^r},
\]
matching the formula for $T$ in the statement of the lemma.
\end{proof}

We can now prove \Cref{thm:main-sym}.

\begin{proof}[Proof of \Cref{thm:main-sym}]
Let $\mathcal{F}$ be a $2$-intersecting subset of $S_n$ of size $(n-2)!$, where $n \geq 2$, and let $f$ be its characteristic function. Our goal is to show that
\[
 \mathcal{F} = \{ \alpha \in S_n : \alpha(i_1) = j_1 \text{ and } \alpha(i_2) = j_2 \}
\]
for some $i_1 \neq i_2 \in [n]$ and $j_1 \neq j_2 \in [n]$. In other words, we need to show that $\mathcal{F}$ is a $2$-coset.

Suppose first that $n \geq 8$. Since $n \geq 5$, \Cref{thm:MR21} shows that $\deg f \leq 2$. \Cref{lem:C-bound-sym} shows that $C(f) \leq n-2$. \Cref{lem:main-ub-sym} implies that $C(f) \geq 2$ and $|\mathcal{F}| \leq T (n - C(f))!$, where $T = \lfloor C(f)/2 \rfloor (C(f) - 1)!/2^{\lfloor C(f)/2 \rfloor}$. Therefore
\[
 \frac{(n-2)!}{(n - C(f))!} \leq \frac{2\lfloor C(f)/2 \rfloor (C(f) - 1)!}{2^{\lfloor C(f)/2 \rfloor}}.
\]
The left-hand side is clearly increasing in $n$. Since $n \geq C(f) + 2$, it follows that
\[
 \frac{C(f)!}{2} \leq \frac{2\lfloor C(f)/2 \rfloor (C(f) - 1)!}{2^{\lfloor C(f)/2 \rfloor}},
\]
and so
\[
 C(f) \leq \frac{2 \lfloor C(f)/2 \rfloor}{2^{\lfloor C(f)/2 \rfloor - 1}}.
\]
If $C(f) = 2r$ is even then this reads $2r \leq 2r/2^{r-1}$, and so $r = 1$. If $C(f) = 2r+1$ is odd then this reads $2r+1 \leq 2r/2^{r-1}$, which never holds. We conclude that $C(f) = 2$.

Let $\alpha \in \mathcal{F}$ be arbitrary. Since $C(f) = 2$, there is a certificate of the form $\{(i_1,j_1),(i_2,j_2)\}$ for $\alpha$. The number of permutations satisfying this certificate is $(n-2)!$, and we conclude that $\mathcal{F}$ consists of all permutations satisfying the certificate, completing the proof in this case.

\smallskip

In order to complete the proof, we consider the case $n \leq 7$. It suffices to show that for $n \in \{2,3,4,5,6,7\}$, any $2$-intersecting family of size $(n-2)!$ which contains the identity permutation is of the form
\[
 \{ \alpha \in S_n : \alpha(i_1) = i_1, \alpha(i_2) = i_2 \}
\]
for some $i_1 \neq i_2$.

If $n = 2$ or $n = 3$ then $(n-2)! = 1$, and the claim can be verified directly. In order to verify the remaining cases $n \in \{4,5,6,7\}$, we formulate the task as a maximum clique problem. Let $G_n$ be the graph whose vertex set consists of all permutations which $2$-intersect the identity permutation, and in which two permutations are connected if they $2$-intersect. A $2$-intersecting family containing the identity permutation is the same as a clique in $G_n$. Using SAGE~\cite{sagemath}, which internally uses the software library Cliquer~\cite{Cliquer}, we verify that a maximum clique in $G_n$ contains $(n-2)!$ many permutations, and furthermore, there are exactly $\binom{n}{2}$ many maximum cliques, matching the number of $2$-cosets containing the identity permutation. The relevant code appears in \Cref{apx:main-sym}. This completes the proof.
\end{proof}

\section{Main theorem: perfect matching scheme} \label{sec:main-pms}

The proof of \Cref{thm:main-pms} is similar to that of \Cref{thm:main-sym}, but there is an additional complication: \Cref{lem:degree1-cert-pms} only states that degree~$1$ Boolean functions have certificate complexity~$2$. This causes the proof of \Cref{lem:structure-sym} to fail. In order to get around this, we show that the bad case never occurs for \emph{(inclusion-)maximal} $2$-intersecting families.

Fix $n \geq 1$, and work over $\cM_{2n}$. We start by defining \emph{extended certificates}. An extended certificate of size $c$ is either a standard certificate of size $c$, or a pair $(C',\{i,j,k\})$, where $C'$ is a certificate of size $c-1$ and $i,j,k \in [2n]$ are distinct elements not appearing in $C'$. A perfect matching $m \in \cM_{2n}$ satisfies a certificate $(C',\{i,j,k\})$ if it satisfies $C'$ and doesn't contain any of the pairs $\{i,j\},\{i,k\},\{j,k\}$.

\begin{lemma} \label{lem:extended-certificates}
Fix $n \geq 1$, work over $\cM_{2n}$, and let $c \leq n - 2$. Let $C_1 = (C'_1,\{i,j,k\})$ be an extended certificate of size $c$, and let $C_2$ be an extended certificate of size $c$. If any perfect matching satisfying $C_1$ and any perfect matching satisfying $C_2$ are $2$-intersecting, then the same holds when $C_1$ is replaced by $C'_1$.
\end{lemma}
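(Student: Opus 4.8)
The statement says: if $C_1 = (C_1', \{i,j,k\})$ is an extended certificate (so it forbids the three pairs among $i,j,k$), and every perfect matching satisfying $C_1$ $2$-intersects every perfect matching satisfying $C_2$, then the same conclusion holds for $C_1'$ in place of $C_1$. In other words, dropping the "anti-triangle" restriction on $i,j,k$ does not break the $2$-intersecting guarantee. The plan is to argue by contradiction: suppose there exist a perfect matching $m_1$ satisfying $C_1'$ (but necessarily violating the anti-triangle condition, i.e. $m_1$ contains one of $\{i,j\},\{i,k\},\{j,k\}$) and a perfect matching $m_2$ satisfying $C_2$ such that $|m_1 \cap m_2| \le 1$. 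From $m_1$ I want to build a new perfect matching $\tilde m_1$ that still satisfies $C_1'$, now also satisfies the anti-triangle restriction (so it satisfies the full $C_1$), and still has $|\tilde m_1 \cap m_2| \le 1$, contradicting the hypothesis about $C_1$.

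The key step is the local surgery producing $\tilde m_1$. Since $i,j,k$ do not appear in $C_1'$, the edges of $m_1$ touching $i,j,k$ are not pinned by $C_1'$. Say WLOG $\{i,j\} \in m_1$; let $\{k,\ell\} \in m_1$ be the edge at $k$ (note $\ell \notin \{i,j,k\}$, and $\ell$ is not an endpoint of a pinned pair of $C_1'$ either, for the same reason $i,j,k$ aren't — wait, $\ell$ could be pinned; I need one more free vertex). Here is where the hypothesis $c \le n-2$ enters, exactly as in Lemma~\ref{lem:2-int-coset-pms}: $C_1'$ has size $c-1 \le n-3$, so it pins at most $2(n-3) = 2n-6$ vertices; together with $i,j,k$ that is at most $2n-3$ vertices, leaving at least $3$ vertices of $[2n]$ completely free. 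I can therefore find a matched pair $\{a,b\} \in m_1$ with $a,b \notin C_1'$ and $\{a,b\} \cap \{i,j,k\} = \emptyset$ — actually I should be a little careful: the free vertices come in $m_1$-matched pairs only if there are enough of them; since at least $3$ vertices avoid $C_1'\cup\{i,j,k\}$ and at least one of them, call it $a$, is matched by $m_1$ to some $b$, I can rewire $\{i,j\},\{k,\ell\},\{a,b\}$ in $m_1$. Replace these three edges by $\{i,k\}$? No — that still forms part of a potential triangle only if $\{j,\cdot\}$ etc.; the clean choice is to destroy the triangle entirely: replace $\{i,j\},\{k,\ell\},\{a,b\}$ by $\{i,a\},\{j,b\},\{k,\ell\}$ — but then I've kept $\{k,\ell\}$ and destroyed $\{i,j\}$, so the only triangle-edge among $i,j,k$ that remains is none, good; however I must also ensure $\{i,k\},\{j,k\}\notin \tilde m_1$, which holds since $k$ is matched to $\ell$. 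So $\tilde m_1$ satisfies the anti-triangle condition. It still satisfies $C_1'$ since I only touched edges at $i,j,k,\ell,a,b$, none of which are endpoints of $C_1'$-pairs (here I use that $\ell,a,b$ were chosen free; I may need to also move $\ell$ if it is pinned — if $\{k,\ell\}$ has $\ell$ pinned, just pick the free matched pair $\{a,b\}$ and use the rewiring $\{i,a\},\{k,b\},\{j,\ell\}$ instead, keeping $\{j,\ell\}$; in all cases a constant number of free vertices suffices, and $c \le n-2$ provides them).

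Finally I must check $|\tilde m_1 \cap m_2| \le 1$ still. The symmetric difference $m_1 \triangle \tilde m_1$ consists of a bounded set of edges supported on $\{i,j,k,\ell,a,b\}$. The new edges $\{i,a\},\{j,b\}$ (or their analogues) might coincide with edges of $m_2$, which would be bad. To prevent this, I should choose the free pair $\{a,b\}$ — or rather the rewiring partners — to avoid $m_2$: since $m_2$ imposes only $O(1)$ constraints near $i,j,k$ (each of $i,j,k$ is matched to exactly one vertex by $m_2$), and I have a flexible supply of free vertices, I can pick the rewiring so that none of the newly created edges lies in $m_2$; then $|\tilde m_1 \cap m_2| \le |m_1 \cap m_2| \le 1$. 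This is the step requiring the most care — choosing the surgery to simultaneously (a) kill the triangle, (b) preserve $C_1'$, and (c) avoid creating an edge of $m_2$ — but each requirement rules out only $O(1)$ choices while $c\le n-2$ leaves enough slack, so the three conditions can be met at once. This yields $\tilde m_1 \models C_1$ with $|\tilde m_1 \cap m_2|\le 1$, contradicting the hypothesis, and the lemma follows.
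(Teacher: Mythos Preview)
Your overall strategy coincides with the paper's: perform a local surgery on $m_1$ near $\{i,j\}$ using a free pair $\{a,b\}$ disjoint from $\{i,j,k\}$ and from $C'_1$, so as to destroy the triangle edge while preserving $C'_1$ and not increasing the intersection with $m_2$. The paper likewise leaves $\{k,\ell\}$ alone and only swaps $\{i,j\},\{a,b\}$ for either $\{i,a\},\{j,b\}$ or $\{i,b\},\{j,a\}$.

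The gap is in your final step, where you claim that ``$m_2$ imposes only $O(1)$ constraints'' while ``I have a flexible supply of free vertices'' so that ``$c\le n-2$ leaves enough slack''. This counting is misleading: in the tight case $c=n-2$ there are exactly three edges of $m_1$ outside $C'_1$, namely $\{i,j\}$, $\{k,\ell\}$, and a \emph{single} pair $\{a,b\}$; there is no further freedom in choosing $\{a,b\}$, so the ``flexible supply'' does not exist. The slack you need comes not from picking a different free pair but from choosing between the two rewirings on the \emph{same} pair. Set
\[
m'_1 = m_1 \setminus \bigl\{\{i,j\},\{a,b\}\bigr\} \cup \bigl\{\{i,a\},\{j,b\}\bigr\},\qquad
m''_1 = m_1 \setminus \bigl\{\{i,j\},\{a,b\}\bigr\} \cup \bigl\{\{i,b\},\{j,a\}\bigr\}.
\]
Because $m_2$ is a matching, if $\{i,a\}\in m_2$ then neither $\{i,b\}$ nor $\{j,a\}$ can lie in $m_2$, and similarly if $\{j,b\}\in m_2$; hence $m_2$ can meet at most one of the two new-edge sets, and for the other rewiring one has $m'_1\cap m_2\subseteq m_1\cap m_2$ (respectively for $m''_1$). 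This is precisely the observation the paper uses, and it is what your plan is missing. With it in place the argument goes through; in fact you do not even need the contradiction framing, since $m'_1\cap m_2\subseteq m_1\cap m_2$ together with $|m'_1\cap m_2|\ge 2$ already gives $|m_1\cap m_2|\ge 2$ directly.
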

\begin{proof}
Suppose that $m_1$ is a perfect matching satisfying $C'_1$ but not $C_1$. Without loss of generality, $m_1(i) = j$. We need to show that if $m_2$ is a perfect matching satisfying $C_2$ then $m_1$ and $m_2$ are $2$-intersecting.

Since $m_1$ satisfies $C_1$, its pair representation consists of $C_1$ together with $n-(c-1) \geq 3$ more pairs, one of which is $\{i,j\}$. Among the other pairs, at least one $\{a,b\}$ does not involve $k$. Consider the following two perfect matchings:
\begin{align*}
    m'_1 &= m_1 \setminus \bigl\{\{i,j\}, \{a,b\}\bigr\} \cup \bigl\{ \{i,a\}, \{j,b\} \bigr\}, \\
    m''_1 &= m_1 \setminus \bigl\{\{i,j\}, \{a,b\}\bigr\} \cup \bigl\{ \{i,b\}, \{j,a\} \bigr\}.
\end{align*}
Both of these perfect matchings avoid the edges $\{i,j\},\{i,k\},\{j,k\}$. If $m_2$ contains $\{i,a\}$ then it cannot contain $\{i,b\}$ or $\{j,a\}$, and similarly for $\{j,b\},\{i,b\},\{j,a\}$. Therefore $m_2$ cannot intersect both $\bigl\{ \{i,a\}, \{j,b\} \bigr\}$ and $\bigl\{ \{i,b\}, \{j,a\} \bigr\}$. Suppose that $m_2$ does not intersect $\bigl\{ \{i,a\}, \{j,b\} \bigr\}$. Then $m'_1 \cap m_2 \subseteq m_1 \cap m_2$. On the other hand, $m'_1$ satisfies $C_1$, and so $|m'_1 \cap m_2| \geq 2$. We conclude that $|m_1 \cap m_2| \geq 2$, as needed.
\end{proof}

\subsection{Structure theorem} \label{sec:structure-pms}

Two cosets $\dcoset{i_1}{j_1},\dcoset{i_2}{j_2}$ are \emph{compatible} if their intersection is non-empty. A family is \emph{$r$-covered} if it is contained in the union of $r$ compatible cosets.

\begin{lemma} \label{lem:covering-pms}
Let $f$ be the characteristic function of a $2$-intersecting family $\mathcal{F} \subseteq \cM_{2n}$, and suppose that $C(f) \leq n-2$.

If $m \in \mathcal{F}$ then $\mathcal{F}$ is $r$-covered for $r = C(f,m) - 1$.
\end{lemma}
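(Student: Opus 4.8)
The plan is to mimic the proof of \Cref{lem:covering-sym} verbatim, using the perfect matching analogue \Cref{lem:2-int-coset-pms} in place of \Cref{lem:2-int-coset-sym}. Let $m \in \mathcal{F}$, and let $C_m = \bigl\{\{i_1,j_1\},\ldots,\{i_{r+1},j_{r+1}\}\bigr\}$ be a minimum certificate for $m$, so that $|C_m| = C(f,m) = r+1$. I claim $\mathcal{F} \subseteq \dcoset{i_1}{j_1} \cup \cdots \cup \dcoset{i_r}{j_r}$, which is exactly the assertion that $\mathcal{F}$ is $r$-covered (these $r$ cosets are compatible since they are all satisfied simultaneously by $m$, hence pairwise-intersecting).

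The key step: take any $m' \in \mathcal{F}$ and a minimum certificate $C_{m'}$ for it. Since $f$ is the characteristic function of a $2$-intersecting family and $C(f) \leq n-2$, \Cref{lem:2-int-coset-pms} gives $|C_m \cap C_{m'}| \geq 2$. In particular $C_m \cap C_{m'}$ contains at least two of the $r+1$ edges of $C_m$, so it contains some $\{i_s,j_s\}$ with $s \in [r]$ (at most one of the two shared edges can be the ``last'' edge $\{i_{r+1},j_{r+1}\}$). Then $m'$ satisfies $\{i_s,j_s\} \in C_{m'}$, i.e.\ $m'(i_s) = j_s$, so $m' \in \dcoset{i_s}{j_s} \subseteq \dcoset{i_1}{j_1} \cup \cdots \cup \dcoset{i_r}{j_r}$. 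Since $m' \in \mathcal{F}$ was arbitrary, $\mathcal{F}$ is $r$-covered with $r = C(f,m) - 1$.

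There is essentially no obstacle here: the proof is a direct transcription of the symmetric-group argument, with the only nontrivial ingredient being \Cref{lem:2-int-coset-pms}, which is already available. One small point worth noting in the writeup is why the $r$ cosets $\dcoset{i_1}{j_1},\ldots,\dcoset{i_r}{j_r}$ are compatible: because $m$ satisfies all $r+1$ edges of $C_m$, it lies in each of these cosets, so their common intersection is nonempty and in particular any two of them intersect. (If $\mathcal{F}$ is empty the statement is vacuous, and if $C(f,m) = 1$ then $r = 0$ and ``$0$-covered'' means $\mathcal{F} \subseteq \varnothing$, which forces $\mathcal{F} = \{m\}$ to be a singleton — but in fact $C(f,m) \geq 2$ always holds here by \Cref{lem:2-int-coset-pms} applied with $m_1 = m_2 = m$, so this degenerate case does not arise and need not be addressed.)
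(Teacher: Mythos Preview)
Your proof is correct and is exactly the approach the paper takes: the paper simply states that the proof is identical to that of \Cref{lem:covering-sym}, with \Cref{lem:2-int-coset-pms} replacing \Cref{lem:2-int-coset-sym}, and omits the details. Your additional remarks on compatibility of the cosets and the degenerate cases are accurate but not strictly needed.
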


Since the proof is identical to that of \Cref{lem:covering-sym} (with \Cref{lem:2-int-coset-pms} standing for \Cref{lem:2-int-coset-sym}), we omit it.

The following structure theorem is the analog of \Cref{lem:structure-sym}. There are three differences in the statement. First, we assume that $\mathcal{F}$ is \emph{maximal}: it is not properly contained in any larger $2$-intersecting family. Second, we assume that $r \leq n-2$. Third, we only provide certificates to perfect matchings which belong to $\mathcal{F}$.

\begin{lemma} \label{lem:structure-pms}
Let $\mathcal{F} \subseteq \cM_{2n}$ be an $r$-covered family, say $\mathcal{F} \subseteq \dcoset{i_1}{j_1} \cup \cdots \cup \dcoset{i_r}{j_r}$, where $r \leq n-3$. Suppose that $\mathcal{F}$ is maximal, and that its characteristic function $f$ has degree at most~$2$.

Every $m \in \mathcal{F}$ has a certificate of the form $\bigl\{\{i_1,m(i_1)\},\ldots,\{i_r,m(i_r)\},\{i,j\}\bigr\}$.
\end{lemma}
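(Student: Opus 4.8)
The plan is to mimic the inductive proof of \Cref{lem:structure-sym}, using \Cref{lem:degree-reduction-pms} in place of \Cref{lem:degree-reduction-sym}, and handling the extra slack in \Cref{lem:degree1-cert-pms} by invoking maximality through \Cref{lem:extended-certificates}. Concretely, I would prove by induction on $r$ the following statement: if $\mathcal{F} \subseteq \cM_{2n}$ is maximal, $2$-intersecting, $r$-covered by $\dcoset{i_1}{j_1} \cup \cdots \cup \dcoset{i_r}{j_r}$ with $r \le n-3$, and $\deg f \le 2$, then for every choice of distinct elements $k_1,\ldots,k_r$ (each $k_s$ matched to $i_s$ by some perfect matching in the intersection), the restriction of $f$ to $\dcoset{i_1}{k_1} \cap \cdots \cap \dcoset{i_r}{k_r}$ has degree at most~$1$. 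Given this, \Cref{lem:degree1-cert-pms} says that restriction has certificate complexity at most~$2$: so for $m \in \mathcal{F}$, setting $k_s = m(i_s)$, we get a certificate for $m$ consisting of $\bigl\{\{i_1,m(i_1)\},\ldots,\{i_r,m(i_r)\}\bigr\}$ together with \emph{either} one more pair $\{i,j\}$ \emph{or} an ``anti-triangle'' condition $\{i,j,k\}$ forbidding the three edges of a triangle — that is, an extended certificate of size $r+1$ in the sense of \Cref{lem:extended-certificates}. This is where the new ingredient enters: I want to upgrade this extended certificate to an honest certificate of the same size.

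For the degree-reduction induction itself: the base case $r=1$ is \Cref{lem:degree-reduction-pms} applied directly. For the step with $r \ge 2$, fix distinct $k_1,\ldots,k_r$. If $k_s \ne j_s$ for some $s$, say $s=r$, then $\mathcal{F}|_{\dcoset{i_r}{k_r}}$ is $(r-1)$-covered by $\dcoset{i_1}{j_1} \cup \cdots \cup \dcoset{i_{r-1}}{j_{r-1}}$, has degree at most~$2$, and — crucially — is still a maximal $2$-intersecting family inside $\cM_{2(n-1)}$; so the induction hypothesis applies (note $r-1 \le n-4 \le (n-1)-3$, so the hypothesis $r' \le n'-3$ is preserved). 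If instead $k_s = j_s$ for all $s$, I form $g = x_{i_1j_1}\cdots x_{i_rj_r} f = f - \sum x_{i_1k_1}\cdots x_{i_rk_r} f$ over all $(k_1,\ldots,k_r) \ne (j_1,\ldots,j_r)$; by the just-established cases each summand has degree $\le r+1$, so $\deg g \le \max(\deg f, r+1) = r+1$, and $g$ is supported on $\mathcal{I} = \dcoset{i_1}{j_1} \cap \cdots \cap \dcoset{i_r}{j_r}$, so $r$ applications of \Cref{lem:degree-reduction-pms} give $\deg g|_{\mathcal{I}} \le 1$, hence $\deg f|_{\mathcal{I}} = \deg g|_{\mathcal{I}} \le 1$. (The one thing to double-check here, because restrictions in the matching scheme remove two points at a time, is that after fixing $\dcoset{i_s}{j_s}$ the indices $i_s,j_s$ are genuinely gone and the remaining cosets are still pairwise compatible; this is automatic since the cosets were compatible to begin with and fixing an edge only removes its two endpoints.)

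The main obstacle — and the only place maximality is used — is removing the anti-triangle from the extended certificate. Suppose $m \in \mathcal{F}$ has extended certificate $C_1 = (C_1', \{i,j,k\})$ of size $r+1$, where $C_1' = \bigl\{\{i_1,m(i_1)\},\ldots,\{i_r,m(i_r)\}\bigr\}$. For \emph{every} other $m' \in \mathcal{F}$, $m'$ has its own certificate $C_2$ of size $\le C(f) \le n-2$ (possibly itself extended), and since $\mathcal{F}$ is $2$-intersecting, any matching satisfying $C_1$ and any satisfying $C_2$ are $2$-intersecting. By \Cref{lem:extended-certificates} (with $c = r+1 \le n-2$), the same is true with $C_1$ replaced by $C_1'$. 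Since this holds against every $m' \in \mathcal{F}$, the coset-like set of all matchings satisfying $C_1'$ — which is the $r$-coset $\dcoset{i_1}{m(i_1)} \cap \cdots \cap \dcoset{i_r}{m(i_r)}$ — can be added to $\mathcal{F}$ while preserving the $2$-intersecting property; by maximality it is already contained in $\mathcal{F}$. In particular the restriction of $f$ to this $r$-coset is identically~$1$, which is a degree~$0$ function, so it certainly has certificate complexity $\le 1$: picking up any one edge $\{i,m(i)\}$ of $m$ outside the $i_s$'s gives the desired honest certificate $\bigl\{\{i_1,m(i_1)\},\ldots,\{i_r,m(i_r)\},\{i,m(i)\}\bigr\}$ (and in the non-extended case the argument from \Cref{lem:structure-sym} already produces such a certificate directly). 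One should make sure \Cref{lem:extended-certificates} is applicable, i.e.\ that $C_1'$ has fewer than $n-2$ edges so there are $\ge 3$ free edges in any completion — this holds since $|C_1'| = r \le n-3$. Handling the edge case where $C_2$ is itself an extended certificate just requires applying \Cref{lem:extended-certificates} symmetrically, or observing that a matching satisfying an extended certificate in particular satisfies its underlying standard part.
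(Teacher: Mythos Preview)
Your overall strategy matches the paper's --- establish the degree-$1$ restriction claim by the induction from \Cref{lem:structure-sym}, then use maximality through \Cref{lem:extended-certificates} to eliminate the anti-triangle case --- but your induction is set up incorrectly. You fold maximality and $2$-intersection into the induction hypothesis and then assert that $\mathcal{F}|_{\dcoset{i_r}{k_r}}$ is ``crucially --- still a maximal $2$-intersecting family inside $\cM_{2(n-1)}$''. Neither property survives restriction: after fixing the edge $\{i_r,k_r\}$, two matchings that shared two edges in $\cM_{2n}$ may share only one in $\cM_{2(n-1)}$ (namely when $\{i_r,k_r\}$ itself was one of the two), so the restricted family is only guaranteed to be $1$-intersecting; and even when it happens to be $2$-intersecting there is no reason for it to be maximal (restricting a $2$-coset to one of its defining cosets already gives a $1$-coset, which is not $2$-intersecting). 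So the inductive step, as you have written it, does not go through.

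The repair is to drop these extra hypotheses from the induction entirely: the degree-$1$ claim needs only ``$r$-covered'' and ``$\deg f \le 2$'', exactly as in \Cref{lem:structure-sym}, and both of \emph{those} do pass to $\mathcal{F}|_{\dcoset{i_r}{k_r}}$. Maximality should enter only once, at the very end, to dispose of the anti-triangle --- which you handle essentially correctly. One small cleanup there: you invoke $C(f) \le n-2$, but that is not among the hypotheses of the lemma. Instead, equip each $m' \in \mathcal{F}$ with its own (possibly extended) certificate of size $r+1$ coming from the degree-$1$ step you just proved; then \Cref{lem:extended-certificates} applies with $c = r+1 \le n-2$, and the rest of your argument goes through.
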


Note that the certificate could mention the same pair twice, for example if $m(i_1) = i_2$.

\begin{proof}
The first step is to prove that for every distinct $k_1,\ldots,k_r$, the restriction of $f$ to $\dcoset{i_1}{k_1} \cap \cdots \cap \dcoset{i_r}{k_r}$ has degree at most~$1$, assuming the coset intersection is non-empty. This part is identical to the proof of \Cref{lem:structure-sym}, so we do not repeat it here (the only difference is that we never consider $k_1,\ldots,k_r$ such that the coset intersection is empty).

Now suppose that $m \in \mathcal{F}$. Then the restriction of $f$ to $I = \dcoset{i_1}{m(i_1)} \cap \cdots \cap \dcoset{i_r}{m(i_r)}$ (which is non-empty) has degree~$1$. According to \Cref{thm:degree1-classification-pms}, $f|_I$ is either a dictator, a triangle, or an anti-triangle. If $f|_I$ is a dictator or a triangle, then $m|_I$ has a certificate of size~$1$, and we are done. We would like to show that the remaining case is impossible.

Suppose that $f|_I$ is an anti-triangle. Then $m$ has an extended certificate $(C',\{i,j,k\})$ of size $r+1 \leq n-2$. The same argument shows that every perfect matching in $\mathcal{F}$ has an extended certificate of size $r+1 \leq n-2$. Therefore \Cref{lem:extended-certificates} implies that if $m' \notin \mathcal{F}$ is a perfect matching satisfying $C'$, then $\mathcal{F} \cup \{m'\}$ is also $2$-intersecting. Hence either $\mathcal{F}$ is not maximal (if such $m' \notin \mathcal{F}$ exists), or in fact $C'$ is already a certificate for $m$.
\end{proof}

\subsection{Proof of main theorem} \label{sec:proof-pms}

We can now prove the analog of \Cref{lem:main-ub-sym}, from which \Cref{thm:main-pms} will easily follow. The only difference in the statement is the additional assumption that $\mathcal{F}$ is maximal.

\begin{lemma} \label{lem:main-ub-pms}
Let $\mathcal{F} \subseteq \cM_{2n}$ be a maximal $2$-intersecting family whose characteristic function $f$ has degree at most~$2$, and suppose that $C(f) \leq n-2$. Let $C_1(f)$ the maximum certificate complexity of any $m \in \mathcal{F}$.

The family $\mathcal{F}$ is contained in the union of at most $T$ many $C_1(f)$-cosets, where
\[
 T = \frac{2\lfloor C_1(f)/2 \rfloor (C_1(f) - 1)!}{2^{\lfloor C_1(f)/2 \rfloor}}.
\]
Moreover, $C_1(f) \geq 2$.
\end{lemma}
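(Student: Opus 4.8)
The plan is to transcribe the proof of \Cref{lem:main-ub-sym} essentially verbatim, replacing each ingredient by its perfect-matching counterpart: \Cref{lem:2-int-coset-pms} for \Cref{lem:2-int-coset-sym}, \Cref{lem:covering-pms} for \Cref{lem:covering-sym}, and \Cref{lem:structure-pms} for \Cref{lem:structure-sym}. If $\mathcal{F}$ is empty the claim is vacuous, so fix $\beta \in \mathcal{F}$ with $C(f,\beta) = C_1(f)$. Applying \Cref{lem:2-int-coset-pms} with $m_1 = m_2 = \beta$ gives $C_1(f) = |C_\beta| \geq 2$, which is the ``moreover'' clause. By \Cref{lem:covering-pms}, $\mathcal{F}$ is $r$-covered with $r = C_1(f) - 1$; write $\mathcal{F} \subseteq \dcoset{i_1}{j_1} \cup \cdots \cup \dcoset{i_r}{j_r}$, and note that the pairs $\{i_1,j_1\},\ldots,\{i_r,j_r\}$ are pairwise disjoint, being edges of the single perfect matching $\beta$. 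Since $r = C_1(f) - 1 \leq C(f) - 1 \leq n-3$, \Cref{lem:structure-pms} applies, so every $m \in \mathcal{F}$ has a certificate $C_m$ of the form $\bigl\{\{i_1,m(i_1)\},\ldots,\{i_r,m(i_r)\},\{i,j\}\bigr\}$, of size at most $r+1$. Running the same reasoning with $r-1$ cosets in place of $r$ (still legitimate since $r-1 \leq n-3$) shows that $\mathcal{F}$ cannot be $(r-1)$-covered, as otherwise $\beta$ would have a certificate of size at most $r < C_1(f)$. Hence, for every collection $X$ of at most $r-1$ compatible cosets there is $m_X \in \mathcal{F}$ outside $\bigcup X$, for which we fix a certificate $C_X$ of the above form.

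With these facts in hand, the combinatorial core of the proof of \Cref{lem:main-ub-sym} goes through: we colour a pair of the form $\{i_k,\cdot\}$ with colour $k \in [r]$ and the remaining ``leftover'' pair with colour $r+1$, so that each certificate carries (at most) one pair of each colour. For $\alpha \in \mathcal{F}$ we build a nested chain $X_0 \subset X_1 \subset \cdots \subseteq C_\alpha$ of collections of compatible cosets exactly as before, using \Cref{lem:2-int-coset-pms} at each step to locate two common cosets of $C_\alpha$ and $C_{X_s}$ lying outside $X_s$; the parity of $C_1(f)$ dictates whether the chain starts from $X_0 = \emptyset$ or from a single coset, and the resulting count of possibilities for $C_\alpha$ comes out to be exactly the $T$ in the statement. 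Since each admissible $C_\alpha$ determines a $C_1(f)$-coset containing $\alpha$, this exhibits $\mathcal{F}$ inside a union of at most $T$ such cosets.

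Two points distinguish this from the symmetric argument, and the second is the only genuinely new one. First, \Cref{lem:structure-pms} --- unlike \Cref{lem:structure-sym} --- needs $\mathcal{F}$ to be maximal and only produces certificates for members of $\mathcal{F}$; both are harmless here, since maximality is assumed and the counting only ever invokes certificates of elements of $\mathcal{F}$. Second, and more delicately, certificates are now sets of \emph{unordered} pairs, so ``the colour of a pair'' is well defined only when $\{i_k, m(i_k)\}$ meets $\{i_1,\ldots,i_r\}$ in the single point $i_k$; this can fail exactly when $m(i_k) = i_{k'}$ for some other index $k'$, in which case the pair carries two colours and $C_m$ has strictly fewer than $r+1$ pairs. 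I expect this degeneracy to be the main obstacle. It should be dealt with by observing that it never occurs for a perfect matching attaining $C(f,m) = C_1(f)$ (a minimum certificate of size $r+1$ forces each $\{i_k,m(i_k)\}$ to avoid the other $i_{k'}$'s, and the leftover pair to avoid all of them), so the chain construction and the bound $T$ apply verbatim to those; the remaining members of $\mathcal{F}$, whose special-form certificates are shorter, must then be shown to lie in the $C_1(f)$-cosets already produced --- e.g.\ by padding a short certificate up to size $r+1$ using further edges of $m$ (possible because $r+1 \leq n-2$ leaves free vertices), arranged so that no new colours are introduced. Checking that this padding is always consistent with the colouring is where the bookkeeping concentrates.
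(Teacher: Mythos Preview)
Your outline matches the paper's proof almost verbatim up to and including the ``not $(r-1)$-covered'' step, and you correctly isolate the one genuinely new difficulty: a pair $\{i_k,m(i_k)\}$ might coincide with $\{i_{k'},m(i_{k'})\}$ when $m(i_k)=i_{k'}$, spoiling the colouring. But your proposed resolution --- treat only the matchings with $C(f,m)=C_1(f)$ via the chain argument, then ``pad'' the shorter certificates of the remaining $m$ with extra edges of $m$ --- is both unnecessary and, as stated, does not work. If $m(i_1)=i_2$, the standard certificate already contains the edge $\{i_1,i_2\}$; any further edge of $m$ you adjoin is disjoint from $\{i_1,\ldots,i_r\}$ and therefore carries colour $r+1$, so the padded certificate has two pairs of colour $r+1$ and none of colour $1$ or $2$. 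The counting in the chain argument uses precisely that every certificate has one pair of each colour (this is what turns the $\binom{|C_{X_s}|}{2}$ crude bound into $\binom{C_1(f)-|X_s|}{2}$), so the padded certificate cannot be fed into it.

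The paper's resolution is much simpler and uses only facts you already have. Suppose some $m\in\mathcal{F}$ has $m(i_k)=i_{k'}$ with $k\neq k'$. Then the standard certificate for $m$ has at most $r$ distinct pairs, so $C(f,m)\le r$. Applying \Cref{lem:covering-pms} to this $m$ (not to $\beta$) shows that $\mathcal{F}$ is $(C(f,m)-1)$-covered, hence $s$-covered for some $s\le r-1$; but then \Cref{lem:structure-pms} with $s$ in place of $r$ (legitimate since $s\le n-3$) gives every element of $\mathcal{F}$ --- in particular $\beta$ --- a certificate of size at most $s+1\le r<C_1(f)$, a contradiction. Thus the degeneracy never occurs for \emph{any} $m\in\mathcal{F}$: every standard certificate has exactly one pair of each colour, and the chain argument from \Cref{lem:main-ub-sym} applies verbatim to every $\alpha\in\mathcal{F}$ with no padding needed.
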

\begin{proof}
Let $\mu \in \mathcal{F}$ be a perfect matching satisfying $C(f,\mu) = C_1(f)$. \Cref{lem:2-int-coset-pms}, applied to $m_1 = m_2 = \mu$, shows that $C(f,\mu) \geq 2$, and so $C_1(f) \geq 2$. \Cref{lem:covering-pms} shows that $\mathcal{F}$ is $r$-covered, where $r = C(f,\mu) - 1$. Note that $r \leq C(f) - 1 \leq n - 3$. Without loss of generality, we can assume that $\mathcal{F} \subseteq \dcoset{1}{r+1} \cup \dcoset{2}{r+2} \cup \cdots \cup \dcoset{r}{2r}$.

\Cref{lem:structure-pms} shows that every $m \in \mathcal{F}$ has a certificate of the form $\bigl\{\{1,m(1)\},\ldots,\{r,m(r)\},\{i,j\}\bigr\}$, which we call the \emph{standard certificate}. The \lcnamecref{lem:structure-pms} also implies that $\mathcal{F}$ is not $(r-1)$-covered: otherwise, every perfect matching in $\mathcal{F}$ would have a certificate of size $r < C_1(f)$, which contradicts the definition of $C_1(f)$.

A standard certificate $\bigl\{\{1,m(1)\},\ldots,\{r,m(r)\},\{i,j\}\bigr\}$ satisfies $m(1),\ldots,m(r) \notin [r]$, since otherwise \Cref{lem:structure-pms} would imply that $\mathcal{F}$ is $(r-1)$-covered (since some of the pairs $\{t,m(t)\}$ coincide), which is impossible. This allows us to color the pairs in a standard certificate unambiguously: a pair $\{t,m(t)\}$ is colored $t$, and a pair $\{i,j\}$ with $i,j \notin [r]$ is colored $r+1$. Every standard certificate contains exactly one pair of each color.

The remainder of the proof is identical to that of \Cref{lem:main-ub-sym}.
\end{proof}

We can now prove \Cref{thm:main-pms}. The proof is very similar to that of \Cref{thm:main-sym}.

\begin{proof}[Proof of \Cref{thm:main-pms}]
Let $\mathcal{F}$ be a $2$-intersecting subset of $\cM_{2n}$ of size $(2n-5)!!$, where $n \geq 2$, and let $f$ be its characteristic function. Our goal is to show that
\[
 \mathcal{F} = \{ m \in \cM_{2n} : m(i_1) = j_1 \text{ and } m(i_2) = j_2 \}
\]
for some distinct $i_1,j_1,i_2,j_2 \in [2n]$.

Suppose first that $n \geq 8$. Since $n \geq 3$, \Cref{thm:FMS} shows that $\mathcal{F}$ is maximal and that $\deg f \leq 2$. \Cref{lem:C-bound-pms} shows that $C_1(f) \leq n-2$. \Cref{lem:main-ub-pms} implies that $C_1(f) \geq 2$ and  $|\mathcal{F}| \leq T (2n - 2C_1(f) - 1)!!$, where $T = \lfloor C_1(f)/2 \rfloor (C_1(f) - 1)!/2^{\lfloor C_1(f)/2 \rfloor}$. Therefore
\[
 \frac{(2n-5)!!}{(2n - 2C_1(f) - 1)!!} \leq \frac{2\lfloor C_1(f)/2 \rfloor (C_1(f) - 1)!}{2^{\lfloor C_1(f)/2 \rfloor}}.
\]
The left-hand side is clearly increasing in $n$. Since $n \geq C_1(f) + 2$, it follows that
\[
 \frac{(2C_1(f))!}{3 \cdot 2^{C_1(f)} C_1(f)!} =
 \frac{(2C_1(f) - 1)!!}{3} \leq \frac{2\lfloor C_1(f)/2 \rfloor (C_1(f) - 1)!}{2^{\lfloor C_1(f)/2 \rfloor}},
\]
and so 
\[
 (2C_1(f))! \leq 3 \cdot 2^{\lceil C_1(f)/2 \rceil} C_1(f)! (2\lfloor C_1(f)/2 \rfloor) (C_1(f) - 1)!.
\]

If $C_1(f)$ is even then this reads
\[
 (2C_1(f))! \leq 3 \cdot 2^{C_1(f)/2} C_1(f)!^2.
\]
When $C = C_1(f)$ increases by $1$, the left-hand side increases by a factor of $(2C+2)(2C+1)$, while the right-hand side increases by a factor of $\sqrt{2} (C+1)^2$, which is smaller for $C \geq 2$. When $C = 3$, the inequality fails. Therefore in this case, $C_1(f) = 2$.

If $C_1(f)$ is odd then the inequality reads
\[
 (2C_1(f))! \leq 3 \cdot 2^{(C_1(f)+1)/2} C_1(f)! (C_1(f) - 1)(C_1(f) - 1)! < 3 \cdot 2^{(C_1(f)+1)/2} C_1(f)!^2.
\]
As before, if we increase $C = C_1(f)$ by $1$, the left-hand side increases by a larger factor than the (far) right-hand side. When $C = 3$, the inequality fails, and so this case cannot happen.

Now let $m \in \mathcal{F}$ be arbitrary. Since $C_1(f) = 2$, there is a certificate of the form $\bigl\{\{i_1,j_1\},\{i_2,j_2\}\bigr\}$ for $m$. The number of permutations satisfying this certificate is $(2n-5)!!$, and we conclude that $\mathcal{F}$ consists of all perfect matchings satisfying the certificate, completing the proof in this case.

\smallskip

In order to complete the proof, we consider the case $n \leq 7$. It suffices to show that for $n \in \{2,3,4,5,6,7\}$, any $2$-intersecting family of size $(2n-5)!!$ which contains the perfect matching $\{1,n+1\},\ldots,\{n+2n\}$ is of the form
\[
 \{ m \in \cM_{2n} : m(i_1) = n + i_1, m(i_2) = n + i_2 \}
\]
for some $i_1 \neq i_2 \in [n]$.

If $n = 2$ or $n = 3$ then $(2n-5)!! = 1$, and the claim can be verified directly. In order to verify the remaining cases $n \in \{4,5,6,7\}$, we use exhaustive search just as in the proof of \Cref{thm:main-sym}. The relevant code appears in \Cref{apx:main-pms}.
\end{proof}

\bibliographystyle{alpha}
\bibliography{biblio}

\appendix

\section{SAGE code} \label{apx:SAGE}

The code snippets below form part of the proofs of \Cref{lem:C-bound-sym,thm:main-sym,thm:main-pms}. They are also available as part of the arXiv version of the paper.

\subsection{Code for Lemma \ref{lem:C-bound-sym}} \label{apx:C-bound-sym}

\begin{lstlisting}
import itertools

def degree_2_functions():
    "Generate vector space of degree 2 functions, represented as truth table"
    return span(QQ, [vector(int((x & m) == m) for x in range(16)) \
                     for m in [0, 1, 2, 4, 8, 3, 5, 9, 6, 10, 12]])

def verify_no_sensitivity_4():
    "Verify that no function from {0,1}^4 to {0,1} has sensitivity 4 at zero"
    degree_2 = degree_2_functions()
    return not any(f for f in itertools.product([0,1], repeat=16) \
                   if f[0] != f[1] == f[2] == f[4] == f[8] and \
                   vector(f) in degree_2)
\end{lstlisting}

\subsection{Code for Theorem \ref{thm:main-sym}} \label{apx:main-sym}

\begin{lstlisting}
def intersection_size(a, b):
    "Size of intersection of two permutations"
    return sum(i==j for (i,j) in zip(a,b))

def generate_G_n(n, t):
    "Generate graph whose cliques are t-intersecting families of S_n containing id"
    idp = list(range(n))
    V = [a for a in Permutations(range(n)) if intersection_size(a, idp) >= t]
    E = [(a,b) for a in V for b in V if t <= intersection_size(a, b) < n]
    return Graph([V, E])

def verify_main_theorem_n(n):
    "Verify that all 2-intersecting families containing id are 2-cosets for given n"
    G_n = generate_G_n(n, 2)
    return G_n.clique_number() == factorial(n - 2) and \
        len(G_n.cliques_maximum()) == binomial(n, 2)

def verify_main_theorem():
    "Verify that all 2-intersecting families containing id are 2-cosets for 4 <= n <= 7"
    return all(verify_main_theorem_n(n) for n in [4,5,6,7])
\end{lstlisting}

\subsection{Code for Theorem \ref{thm:main-pms}} \label{apx:main-pms}

\begin{lstlisting}
def intersection_size(a, b):
    "Size of intersection of two perfect matchings"
    return len(a.intersection(b))

def generate_G_n(n, t):
    "Generate graph whose cliques are t-intersecting families of M_2n containing a fixed PM"
    idm = frozenset(PerfectMatchings(2*n).__iter__().__next__())
    V = [frozenset(a) for a in PerfectMatchings(2*n) if \
        intersection_size(frozenset(a), idm) >= t]
    E = [(a,b) for a in V for b in V if t <= intersection_size(a, b) < n]
    return Graph([V, E])

def verify_main_theorem_n(n):
    "Verify that all 2-intersecting families containing a fixed PM are 2-cosets for given n"
    G_n = generate_G_n(n, 2)
    return G_n.clique_number() == factorial(2*n-4)/(2^(n-2) * factorial(n-2)) and \
        len(G_n.cliques_maximum()) == binomial(n, 2)

def verify_main_theorem():
    "Verify that all 2-intersecting families containing a fixed PM are 2-cosets for 4 <= n <= 7"
    return all(verify_main_theorem_n(n) for n in [4,5,6,7])
\end{lstlisting}

\end{document}